\newtheoremstyle{mystyle}{}{}{\rmfamily}%
{}{\normalfont\bfseries}{ }{ }{}
\newtheorem{theorem}{Theorem}[section]
\newtheorem{proposition}[theorem]{Proposition}
\newtheorem{lemma}[theorem]{Lemma}
\newtheorem{corollary}[theorem]{Corollary}
\newtheorem{defn}[theorem]{Definition}
\theoremstyle{mystyle}
\newtheorem{remark}[theorem]{Remark}
\theoremstyle{plain}
\newtheorem{maintheorem}{Theorem}
\newcommand{\R}{\mathbb{R}}
\newcommand{\N}{\mathbb{N}}
\newcommand{\Z}{\mathbb{Z}}
\newcommand{\cW}{\mathcal{W}}
\newcommand{\cL}{\mathcal{L}}
\newcommand{\cH}{\mathcal{H}}
\newcommand{\cHL}{\mathcal{H}_{\hbox{\tiny loc}}}
\newcommand{\te}{{\theta}}
\newcommand{\G}{\mathbb{G}}
\newcommand{\X}{\mathbb{X}}
\newcommand{\diam}{\mathrm{diam}}
\newcommand{\1}{\mathbf{1}}
\DeclareMathOperator{\id}{id}
\begin{document}
\selectlanguage{english}
\vspace*{1cm}

\begin{center}
\setstretch{2}
{\Huge \bfseries The Martin boundary of an extension by a hyperbolic group}\\
%
Sara Ruth Pires Bispo\textsuperscript{(1)} and Manuel Stadlbauer\textsuperscript{(2)}\\
\bigskip
{\scriptsize	
\textsuperscript{(1)}
Centro das Ciências Exatas e das Tecnologias, Universidade Federal do Oeste da Bahia,
47808-021 Barreiras  (BA), Brazil\\[-.2cm]
\textsuperscript{(2)}
Departamento de Matemática, Universidade Federal do Rio de Janeiro,
Ilha do Fundão,  21941-909 Rio de Janeiro (RJ), Brazil\\[-.2cm]
}
\bigskip \bigskip
{\small \today}
\end{center}

\begin{quote}
We prove uniform Ancona-Gou\"ezel-Lalley inequalities for an extension by a hyperbolic group $G$ of a Markov map which allows to deduce that the visual boundary of the group and the Martin boundary are Hölder equivalent. As application, we identify the set of minimal conformal measures of a regular cover of a convex-cocompact CAT(-1)-manifold with the visual boundary of the covering group, provided that this group is hyperbolic. 
\end{quote}

\section{Introduction and statement of main results}

A standard approach for the encoding of the behaviour at infinity of a transient random walk is to analyse the positive harmonic functions of the associated Markov operator. As observed by Martin in the context of partial differential equations (\cite{Martin:1941}), these harmonic functions also have a topological description through potential theory. Namely, the harmonic functions are related to possible limits of the Green kernel at infinity, and hence, to the boundary of the minimal compactification such that the Green kernel extends to a continuous function on this new domain. However, as the construction only requires a well-defined Green function, it is applicable in a wide range of situations, from elliptic equations  and transient Markov processes  to conformal densities on CAT(-1)-spaces (see, e.g., \cite{Murata:2002,Revuz:1984,Roblin:2011}).  However, due to the generality of the approach, the explicit description of this Martin boundary for specific situations is often non-trivial. To give two examples of those explicit characterizations in the context of simple random walks on discrete groups, the Martin boundary of $\Z^d$ is a singleton whereas the one of the free group coincides with its visual boundary (see, e.g., \cite{Woess:1994a}). 

The other fundamental objects of this article are metric spaces which are 
hyperbolic in the sense of Gromov. The abstract definition of these spaces requires a uniform estimate of the Gromov product  
(see Section \ref{sec:AG-inequalities}), which can be reduced to a simple geometric property if the space is a geodesic space. That is, a geodesic space is $\delta$-hyperbolic if each side of an arbitrary triangle is always contained in the  $\delta$-neighbourhoods of the other two. Furthermore, each Gromov hyperbolic space comes with its visual boundary, 
which abstractly is defined through the asymptotic behaviour of the Gromov product (see Section \ref{sec:geometry-Martin}), but also can be defined through geometric data coming from Busemann's function in case of a geodesic space (see \cite{CoornaertPapadopoulos:2001}). Due to this second characterisation and the fact that the boundary comes with a natural metric, the visual boundary today is the standard tool for encoding and analysing the behaviour of geodesics and horospherical foliations.

However, even though these boundary constructions differ completely as the first is based on abstract potential theory whereas the second uses geometric phenomena, it is known since the works of 
Ancona in \cite{Ancona:1987} on elliptic operators on negatively curved manifolds 
and Gouëzel-Lalley in \cite{GouezelLalley:2013,Gouezel:2014} for symmetric, simple random walks on hyperbolic groups that the Martin and the visual boundary coincide in these cases. Ancona's contribution, from a general viewpoint, is a 
strategy of proof, which allows to deduce a geometric description of the Martin boundary through minimal harmonic functions from the so called Ancona inequalities, whereas the work of Gouëzel and Lalley provides us with an argument which allows to obtain uniform Ancona inequalities and a relation between the Green kernel and the Gromov product (see Theorems \ref{mainthmA} and \ref{theo:Ancona-Gouezel inequalities} for the setting in here). 
A further geometrization related to simple random walks is due to Kaimanovich (\cite{Kaimanovich:2000}), who proved that the visual boundary of a Gromov hyperbolic group coincides with the Poisson boundary, that is the set of bounded harmonic functions. However, it is worth noting that, even though the statements are similar, the method of proof in \cite{Kaimanovich:2000} is based on a submultiplicative ergodic theorem (see, also, \cite{KarlssonMargulis:1999}) and therefore is applicable also to  random walks whose transitions neither have to be symmetric nor finitely supported (for transitions with exponential tails, see \cite{Gouezel:2015}). On the other hand, as the method of Gouëzel and Lalley allows to study $\rho$-harmonic functions, where $1/\rho \geqslant 1$ is the radius of convergence of the Green function, these seemingly weaker results (in the generality of the results in here) give rise to applications to regular covers of negatively curved manifolds at their intrinsic exponent of convergence as in Theorem \ref{maintheo:regular cover} below.        

\medskip
We now proceed with the setting and the statement of our main results. Throughout, we assume that  $\theta: (\Sigma,\mu) \to (\Sigma,\mu)$ is a probability preserving, topologically mixing and noninvertible Markov map of a standard probability space with respect to a finite partition  such that $\log d\mu\circ \theta /d\mu$ has a Hölder continuous representative (see Definitions \ref{def:Markov_map} and \ref{def:gm-map}). Furthermore, we throughout assume that $G$ is a discrete group and that $\kappa: \Sigma \to G$ is constant on the atoms of the Markov partition. The extension of $\theta$ by $G$, or group extension for short, is then defined by  
\[ T: \Sigma \times G \to  \Sigma \times G, (x,g) \to (\theta(x), g \kappa(x))), \]
and is the key object of this article. Observe that, as $\mu$ is $\theta$-invariant, the product of $\mu$ and the Haar measure on $G$ is $T$-invariant. In particular, the transfer operator associated to $T$ can be written as, for $f: \Sigma \times G \to \R$ in a suitable function space,
\begin{align*}
\cL(f)(x,g)  = \sum_{T(y,h) = (x,g)} \frac{d\mu}{d\mu\circ\theta}(y) \; f(y,h)
\end{align*} 
and satisfies $\cL(\mathbf{1}) = \mathbf{1}$. 
It is  worth noting here that group extensions are random walks with dependent increments by identifying 
$\mu\left(\left\{ x \in \Sigma: T^n(x,g) \in \Sigma \times \{h\}\right\}\right)$ with the probability of a transition from $g \in G$ to $h \in G$ in time $n$. Moreover, as each simple random walk can be identified with a group extension, 
group extensions of Markov maps with finite partitions generalise simple random walks with respect to a finitely supported transition rule.  

We now return to the general group extensions o Markov maps. In this setting, the Green function is no longer a function but an operator defined by  
\[ \G_r(f):=  \sum_{n=0}^\infty r^n \mathcal{L}^n(f),\]
where  $f$ is an element of a suitable function space (see Proposition \ref{prop:action of G_r}) and $r < 1/\rho$, where $1/\rho$ is the radius of convergence of $r \mapsto \G_r(\mathbf{1}_{\Sigma \times \{\id\}})(x,\id)$. Furthermore, observe that, by general ergodic theory, the operator $\G_r$ can be extended to $r = 1/\rho$, provided that the map $T$ is totally dissipative. 

We are now in position to specify the main objective of this article. That is, we are interested in relating the Martin boundary, that is the possible limits of $ \G_r(f)(z_n)/\G_r(\mathbf{1}_{\Sigma \times \{\id\}})(z_n)$ 
with the visual boundary of $G$, where $G$ is Gromov hyperbolic and $1 \leq r \leq 1/r$. 
As a first step, we prove a uniform Ancona-Gou\"ezel-Lalley inequality. That is, by combining the first part of Theorem \ref{theo:Ancona-Gouezel inequalities} with Remark \ref{rem:hyperbolic-amenable} one obtains the first main result which  states that the Green operator is almost multiplicative along geodesics. For ease of exposition, the statement here is formulated in the presence of geodesics.

\begin{maintheorem} \label{mainthmA}
Assume that $G$ is a non-elementary and word hyperbolic group, that $T$ is a topologically transitive and that $\G_R(\mathbf{1}_{\Sigma \times\{g \} })(\,\cdot\,,\id) \asymp \G_R(\mathbf{1}_{\Sigma \times\{\id \} })(\,\cdot\,,g)$. Then 
for any $D>0$ and $g,z,h \in G$ such that the distance between $z$ and  the geodesic segment $[g,h]$ is smaller than $D$,
and any $r \in [1,1/\rho]$,
\[ \G_r(\mathbf{1}_{\Sigma \times\{h\} })(\,\cdot\,,g) \asymp \G_r(\mathbf{1}_{\Sigma \times\{h\}}) (\,\cdot\,, z) \;  \G_r(\mathbf{1}_{\Sigma \times\{z\}}) (\,\cdot\,,g).  \]
\end{maintheorem}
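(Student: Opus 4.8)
The plan is to reduce Theorem \ref{mainthmA} to the analytic Ancona--Gou\"ezel--Lalley estimates collected in Theorem \ref{theo:Ancona-Gouezel inequalities}, combined with the comparability hypothesis and the standard ``relative Ancona'' bootstrap. The inequality ``$\lesssim$'' is the easy direction: by the semigroup decomposition of the Green operator over the first visit to the fibre $\Sigma\times\{z\}$ (or rather, a first-passage decomposition for $\cL^n$ at the level $z$), one gets
\[ \G_r(\mathbf{1}_{\Sigma\times\{h\}})(\,\cdot\,,g) \;=\; F_r^{(z)}(\,\cdot\,,g)\,\G_r(\mathbf{1}_{\Sigma\times\{h\}})(\,\cdot\,,z) \]
where $F_r^{(z)}$ is the corresponding first-passage operator, and since $F_r^{(z)}(\,\cdot\,,g)\asymp \G_r(\mathbf{1}_{\Sigma\times\{z\}})(\,\cdot\,,g)$ one already has the upper bound (with the factors in the right order this is in fact an identity up to the first-passage/Green comparison, which is a standard fact in this setting and should be quoted from the earlier sections). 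The real content is the lower bound: one must show that a transition from $g$ to $h$ gains only a bounded factor by being forced to pass through an intermediate point $z$ that lies within distance $D$ of a geodesic $[g,h]$.

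For the lower bound I would argue along a geodesic. First I would use that $G$ is non-elementary word hyperbolic, so $[g,h]$ is a $\delta$-thin path, and pick points $g=x_0,x_1,\dots,x_n=h$ along $[g,h]$ spaced at bounded distance with $z$ within distance $D$ of, say, $x_k$. The uniform Ancona--Gou\"ezel--Lalley inequality of Theorem \ref{theo:Ancona-Gouezel inequalities} says precisely that for points on a geodesic the Green operator is almost multiplicative, i.e.
\[ \G_r(\mathbf{1}_{\Sigma\times\{x_{i+1}\}})(\,\cdot\,,x_{i-1}) \;\asymp\; \G_r(\mathbf{1}_{\Sigma\times\{x_{i+1}\}})(\,\cdot\,,x_i)\;\G_r(\mathbf{1}_{\Sigma\times\{x_i\}})(\,\cdot\,,x_{i-1}), \]
with a constant uniform in $r\in[1,1/\rho]$; chaining this telescoping estimate from $g$ to $z$ and from $z$ to $h$ produces the desired $\asymp$, the point being that the implied multiplicative constants from the finitely many ``local'' moves near $z$ (to pass from the geodesic point $x_k$ to the actual point $z$ at distance $\le D$) are bounded in terms of $D$ alone, using that $\G_r(\mathbf{1}_{\Sigma\times\{h'\}})(\,\cdot\,,g') \asymp \G_r(\mathbf{1}_{\Sigma\times\{h''\}})(\,\cdot\,,g')$ whenever $d(h',h'')\le D$ (again uniformly in $r$), which follows from the one-step estimates for $\cL$ and the finiteness of the partition. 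The hypothesis $\G_R(\mathbf{1}_{\Sigma\times\{g\}})(\,\cdot\,,\id)\asymp\G_R(\mathbf{1}_{\Sigma\times\{\id\}})(\,\cdot\,,g)$ enters to handle the asymmetry of the extension: it lets me pass freely between ``forward'' Green operators $\G_r(\mathbf{1}_{\Sigma\times\{b\}})(\,\cdot\,,a)$ and their ``reversed'' counterparts, which is needed because the Ancona--Gou\"ezel--Lalley inequality and the geodesic chaining are naturally formulated in one direction only.

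The main obstacle I anticipate is making the chaining uniform in $r$ all the way up to the critical value $r=1/\rho$. For $r$ strictly below $1/\rho$ everything is classical, but at $r=1/\rho$ one is working at the radius of convergence, where the Green operator is only defined under the total dissipativity assumption and where naive geometric-series bounds degenerate; the resolution is exactly the Gou\"ezel--Lalley mechanism encapsulated in Theorem \ref{theo:Ancona-Gouezel inequalities}, whose constants are claimed uniform over the closed interval $[1,1/\rho]$, so the task is to verify that the finitely many extra estimates I invoke (the one-step bounds and the bounded-distance comparisons) are themselves $r$-uniform and that the number of chaining steps is controlled by $d(g,h)$ with a constant independent of $r$, so no hidden $r$-dependence creeps in through the length of the geodesic. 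A secondary point of care is the translation between the ``hyperbolic space'' formulation (distance to a geodesic segment $[g,h]$) used in the statement and the purely Gromov-product formulation of Theorem \ref{theo:Ancona-Gouezel inequalities} together with Remark \ref{rem:hyperbolic-amenable}, but since $G$ is word hyperbolic this is the standard dictionary between $\delta$-thinness of triangles and the Gromov product, and I would simply cite it.
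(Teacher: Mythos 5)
There are two genuine gaps here. First, the identity on which you base what you call the easy direction, $\G_r(\X_h)(\,\cdot\,,g)=F_r^{(z)}(\,\cdot\,,g)\,\G_r(\X_h)(\,\cdot\,,z)$, is false: the first-passage decomposition at the fibre $\Sigma\times\{z\}$ reads $\G_r(\X_h)(\,\cdot\,,g)=\G_r(\X_h|\{z\}^c)(\,\cdot\,,g)+F_r^{(z)}(\,\cdot\,,g)\,\G_r(\X_h)(\,\cdot\,,z)$, and the entire content of the theorem is that the avoidance term is dominated by the second term when $z$ is $D$-close to $[g,h]$. As written, your identity would give $\G_r(\X_h)(\,\cdot\,,g)\lesssim \G_r(\X_h)(\,\cdot\,,z)\,\G_r(\X_z)(\,\cdot\,,g)$ for \emph{every} $z\in G$, with no geodesic hypothesis, which is false and would make Theorem \ref{theo:Ancona-Gouezel inequalities} superfluous. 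You have also interchanged which inequality is easy: the genuinely easy direction is $\G_r(\X_h)(\,\cdot\,,z)\,\G_r(\X_z)(\,\cdot\,,g)\lesssim\G_r(\X_h)(\,\cdot\,,g)$, obtained from the (correct) first-visit decomposition together with the uniform bound $\G_r(\X_z)(y,z)=\G_r(\X_{\id})(y,\id)\le\G_R(\X_{\id})(y,\id)<\infty$; the latter is available because Remark \ref{rem:hyperbolic-amenable} yields non-amenability, hence transience and $R>1$, and this is precisely the role of that remark in the deduction.

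Second, the chaining step you propose for the hard direction is not the right mechanism. Theorem \ref{theo:Ancona-Gouezel inequalities}(i) is already stated for any $z$ within distance $D$ of a path of length $d(g,h)$ from $g$ to $h$, with a constant depending only on $D$, so no telescoping along intermediate points $x_0,\dots,x_n$ is needed; and if one only had the three-point estimate for consecutive points and chained it, the constants would compound as $C^{d(g,h)}$, destroying exactly the uniformity in $d(g,h)$ that Ancona's inequality provides. What your argument is actually missing is the conversion of the operator form of Theorem \ref{theo:Ancona-Gouezel inequalities}(i), namely $\G_r(\X_h)(x,g)\le C\,\G_r(\X_z\,\G_r(\X_h))(x,g)$, into the product form of the statement: this follows from Lemma \ref{lem:bounded distortion for the extended operator} applied with $f=\X_h$ (so $L=0$), which shows that $y\mapsto\G_r(\X_h)(y,z)$ is constant up to a uniform factor on the fibre $\Sigma\times\{z\}$, whence $\G_r(\X_z\,\G_r(\X_h))(x,g)\asymp\G_r(\X_h)(y,z)\,\G_r(\X_z)(x,g)$. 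Finally, the hypothesis $\G_R(\X_g)(\,\cdot\,,\id)\asymp\G_R(\X_{\id})(\,\cdot\,,g)$ is not needed to ``reverse'' Green operators in this deduction; it is a standing hypothesis of Theorem \ref{theo:Ancona-Gouezel inequalities} itself and enters its proof (for instance in Lemma \ref{lem:superexponential decay}), so your account of where it is used is also off the mark, though this is minor compared with the two points above.
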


The proof of this theorem makes use of the strategy of Gou\"ezel and Lalley in \cite{GouezelLalley:2013,Gouezel:2014} adapted to the setting of group extensions, which required to develop a potential theory for conformal and exzessive measures in order to perform the necessary substitution of subharmonic functions by exzessive measures (cf. Section \ref{sec:appendix}). Moreover, again by following \cite{GouezelLalley:2013,Gouezel:2014}, it is possible to obtain an estimate for the fluctuations of the Green operator through the Gromov product (see the second part of Theorem \ref{theo:Ancona-Gouezel inequalities}).   

The multiplicative estimate in Theorem \ref{mainthmA} is probably the key result in here, as   
it allows to employ Ancona's argument in order to obtain a geometric characterisation of the Martin boundary and the arguments in \cite{GouezelLalley:2013} to prove Hölder continuity of the Green kernel. 
The Martin boundary associated to general  transient Markov shifts was introduced by Shwartz in \cite{Shwartz:2019a} and is similar to the well-known construction based on Markov operators from probability theory, even though the building blocks of the  boundary are $\sigma$-finite conformal measures instead of positive harmonic functions. In the context of group extensions, the Martin boundary is defined by
\begin{align*} \mathcal{M}_r &:= \left. \left\{ (x,g) \in \Sigma \times G : T^n(x,g) \to \infty , \; \lim_{n \to \infty}   \frac{\G_r(\mathbf{1}_{\Sigma \times\{h\} })(T^n(x,g))}{\G_r(\mathbf{1}_{\Sigma \times\{\id \} })(T^n(x,g))
}  \hbox{ exists for all } h \in G\right\}\right/ \sim 
\end{align*}
where $T^n(x,g) \to \infty$ means that $(T^n(x,g))$ leaves any compact set and $(x,g) \sim (\tilde x,\tilde g)$ that the limits in the definition coincide for all $h \in G$. We also remark that our definition differs slightly from the ones by Shwartz in \cite{Shwartz:2019a,Shwartz:2019} where the defining class of functions is a dense subset of the set of Hölder functions with compact support instead of  $\{ \mathbf{1}_{\Sigma \times\{h\}} : h \in G\}$ as in our definition. We also would like to point our that similar results to Theorems \ref{mainthmA} and \ref{theo:B} recently and independently were obtained by Shwartz in \cite{Shwartz:2019} for the more general setting of locally finite shifts which carry the structure of a hyperbolic graph. However, the method in there does not allow to include the case $r = 1/\rho$. 

Our second principal result characterises $\mathcal{M}_r $ geometrically and reveals that the local influence of $\Sigma$ vanishes as $T^n(x,g) \to \infty$. 
\begin{maintheorem} \label{theo:B} Under the assumptions of Theorem \ref{mainthmA} and for any $1 \leq r \leq 1/\rho$, the following holds.  
  For each sequence $(\xi_n)$ in $G$ converging to $\sigma$ in the visual boundary $\partial G$, the limit 
\[  
 \mu_{\sigma}(f) := 
\lim_{n \to \infty} 
\frac{\G_r(f)(x,\xi_n)}{\G_r(\mathbf{1}_{\Sigma \times\{\id\} })(x,\xi_n)} 
\] 
exists for each Hölder function $f$ with compact support, the limit only depends on $\sigma$ and $\mu_{\sigma}$ is a minimal conformal measure. 
Furthermore, the map $\sigma \to \mu_\sigma$ is a bijection from  $\partial G$ to the set of minimal conformal measures. 
\end{maintheorem}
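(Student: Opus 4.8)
The plan is to run the Ancona--Gou\"ezel--Lalley programme, with the almost\nobreakdash-multiplicativity of $\G_r$ along geodesics (Theorem~\ref{mainthmA}) and its quantitative refinement (the fluctuation bound in the second part of Theorem~\ref{theo:Ancona-Gouezel inequalities}) playing the role of the classical Ancona inequalities, and with the abstract potential theory for $\sigma$-finite conformal measures (Shwartz~\cite{Shwartz:2019a} and Section~\ref{sec:appendix}) providing the Martin representation. \emph{Step 1 (existence of the limit).} I would first take $f=\1_{\Sigma\times\{h\}}$. Fix $\sigma\in\partial G$, a geodesic ray $(z_m)_{m\ge 0}$ from $\id$ to $\sigma$ in the Cayley graph of $G$, and a sequence $\xi_n\to\sigma$. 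Since $(\xi_n\mid\sigma)_{\id}\to\infty$ and $h$ is at bounded distance from $\id$, for $n\gg m$ the point $z_m$ lies within a uniformly bounded distance of both $[\id,\xi_n]$ and $[h,\xi_n]$. Applying Theorem~\ref{mainthmA} with $z=z_m$ to $\G_r(\1_{\Sigma\times\{h\}})(\,\cdot\,,\xi_n)$ and to $\G_r(\1_{\Sigma\times\{\id\}})(\,\cdot\,,\xi_n)$ and dividing, the common factor $\G_r(\1_{\Sigma\times\{z_m\}})(\,\cdot\,,\xi_n)$ cancels, so that
\[
\frac{\G_r(\1_{\Sigma\times\{h\}})(x,\xi_n)}{\G_r(\1_{\Sigma\times\{\id\}})(x,\xi_n)}\ \asymp\ \frac{\G_r(\1_{\Sigma\times\{h\}})(x,z_m)}{\G_r(\1_{\Sigma\times\{\id\}})(x,z_m)} .
\]
By the fluctuation bound the multiplicative error here is $1+O(e^{-\varepsilon m})$ once $z_m$ sits deep inside the two geodesics (both $d(z_m,\xi_n)$ and $d(z_m,\{\id,h\})$ large), hence the left-hand side is Cauchy in the logarithmic scale and converges, and Theorem~\ref{mainthmA} also keeps it bounded away from $0$ and $\infty$. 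Two sequences tending to $\sigma$ can be interleaved into one, so the limit depends only on $\sigma$; and the dependence on the base coordinate $x\in\Sigma$ affects only the last steps of a long trajectory and cancels in the ratio by the exponential mixing of the Ruelle operator on $\Sigma$ (here the topological mixing of $\theta$ and the H\"older hypothesis on $\log d\mu\circ\theta/d\mu$ enter). Writing a general compactly supported H\"older $f$ as $\sum_{h\in F} f_h\,\1_{\Sigma\times\{h\}}$, the extra ratios $\G_r(f_h\1_{\Sigma\times\{h\}})(x,\xi_n)/\G_r(\1_{\Sigma\times\{h\}})(x,\xi_n)$ converge by the same Gibbs--Markov estimates, producing $\mu_\sigma(f)$.

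\emph{Step 2 ($\mu_\sigma$ is conformal).} As a limit of ratios of the positive functionals $f\mapsto\G_r(f)(x,\xi_n)$, the functional $\mu_\sigma$ is a locally finite positive Borel measure, and $\mu_\sigma(\1_{\Sigma\times\{\id\}})=1$, so $\mu_\sigma\neq0$. Passing to the limit in the resolvent identity $\G_r(\cL f)=r^{-1}\bigl(\G_r(f)-f\bigr)$, and using that $f(x,\xi_n)=0$ eventually since $\xi_n\to\infty$ and $f$ has compact support, gives $\mu_\sigma(\cL f)=r^{-1}\mu_\sigma(f)$, i.e.\ $r\,\cL^{*}\mu_\sigma=\mu_\sigma$, so $\mu_\sigma$ is an $r$-conformal measure.

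\emph{Step 3 (bijection), surjectivity and injectivity.} \textbf{Surjectivity:} by the abstract Martin representation every minimal conformal measure is a limit of Martin kernels along some orbit $T^{n_k}(x,g)\to\infty$; as $T$ leaves every compact set the $G$-coordinates exhaust $G$, and since $G\cup\partial G$ is compact a subsequence of them converges to some $\sigma\in\partial G$, so by Step~1 the measure equals $\mu_\sigma$. \textbf{Injectivity:} from Theorem~\ref{mainthmA} (and the symmetry hypothesis) one gets $\mu_\sigma(\1_{\Sigma\times\{h\}})\,\G_r(\1_{\Sigma\times\{h\}})(x,\id)\asymp 1$ when $h$ lies on the ray $[\id,\sigma)$, whereas the fluctuation bound forces this quantity to be $\le C e^{-\varepsilon\,\mathrm{dist}(h,[\id,\sigma))}$ in general; hence the ray $[\id,\sigma)$, and therefore $\sigma$, is recovered from $\mu_\sigma$ as (the limit directions of) the group elements carrying a non-negligible normalised mass, and $\sigma\mapsto\mu_\sigma$ is injective.

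\emph{Step 3, minimality.} Let $\nu$ be conformal with $0\le\nu\le c\,\mu_\sigma$ and, using surjectivity together with the Martin representation, write $\nu=\int_{\partial G}\mu_\zeta\,d\lambda(\zeta)$. For $\sigma'\in\partial G$ with $\sigma'\neq\sigma$ and $w$ running to $\sigma'$ along the corresponding ray, divide $\nu(\1_{\Sigma\times\{w\}})\le c\,\mu_\sigma(\1_{\Sigma\times\{w\}})$ by $\mu_{\sigma'}(\1_{\Sigma\times\{w\}})$: the \emph{uniform} inequality of Theorem~\ref{mainthmA} bounds the integrand $\mu_\zeta(\1_{\Sigma\times\{w\}})/\mu_{\sigma'}(\1_{\Sigma\times\{w\}})$ uniformly in $\zeta$ and $w$ and makes it tend to $\1_{\{\sigma'\}}(\zeta)$ as $w\to\sigma'$, while the right-hand side $c\,\mu_\sigma(\1_{\Sigma\times\{w\}})/\mu_{\sigma'}(\1_{\Sigma\times\{w\}})\to0$ by the injectivity estimate; dominated convergence then yields $\lambda(\{\sigma'\})=0$, and covering $\partial G\setminus\{\sigma\}$ by countably many small open sets (over each of which $\mu_\zeta(\1_{\Sigma\times\{w\}})\,\G_r(\1_{\Sigma\times\{w\}})(x,\id)$ stays bounded below for $w$ deep in the corresponding cone) upgrades this to $\lambda=\lambda(\{\sigma\})\,\delta_\sigma$, so $\nu$ is a multiple of $\mu_\sigma$. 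This last point — turning the \emph{uniformity} of Theorem~\ref{mainthmA} into transversality of the kernels $\mu_\zeta$ at the boundary, hence into extremality — is the main obstacle; it is also what permits the borderline exponent $r=1/\rho$, since Theorem~\ref{mainthmA} is uniform over $r\in[1,1/\rho]$.
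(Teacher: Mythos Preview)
Your Steps~1 and~2 are essentially correct and match the paper's Proposition~\ref{prop:Xi_is_well_defined} and equation~\eqref{eq:conformal}; the Gou\"ezel--Lalley fluctuation bound does give the Cauchy property of the Martin kernel along sequences converging in $\partial G$, and the resolvent identity yields conformality of the limit.

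The genuine gap is in Step~3. Your injectivity argument rests on the claim that $\mu_\sigma(\X_h)\,\G_r(\X_h)(x,\id)\le C e^{-\varepsilon\,\mathrm{dist}(h,[\id,\sigma))}$, which after applying Ancona at the projection $p$ of $h$ onto $[\id,\sigma)$ reduces to $\G_r(\X_h)(x,p)\to 0$ exponentially in $d(h,p)$, i.e.\ to exponential decay of the Green operator at infinity. You attribute this to the ``fluctuation bound'', but part~(ii) of Theorem~\ref{theo:Ancona-Gouezel inequalities} only controls \emph{cross-ratios} of four Green values in the configuration of Figure~\ref{ancona-configuration}; it says nothing about the absolute size of a single $\G_r(\X_h)(x,g)$. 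For $r<R$ one can get decay by crude counting, but at the critical value $r=R$ the decay $\G_r(\X_g)(\cdot,\id)\to 0$ is genuinely non-trivial: in the paper it appears as assertion~(v) of Theorem~\ref{theo:geometric-boundary} and is proved only \emph{after} minimality and unicity (Steps~5--6 there), using the latter as input. Your minimality argument then invokes the integral representation together with this same decay estimate, so the logic is circular precisely at $r=R$---which is the case that matters for Theorem~\ref{maintheo:regular cover}.

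The paper breaks this circle by replacing your mass-concentration argument with the potential theory of Section~\ref{sec:appendix}. The key step (Step~2 of the proof of Theorem~\ref{theo:geometric-boundary}) is to show, via the domination principle (Theorem~\ref{theo:domination}) and the explicit description of reduced measures (Theorem~\ref{theo:reduced_measure}), that for any conformal $\nu$ and any $h$ close to $[g,\sigma)$ one has $\mu_\sigma(\X_g)\asymp\bigl(\mu_\sigma(\X_h)/\nu(\X_h)\bigr) R_{\X_h}(\nu)(\X_g)\le \bigl(\mu_\sigma(\X_h)/\nu(\X_h)\bigr)\nu(\X_g)$. Minimality then follows from an $\mathrm{ess\,inf}/\mathrm{ess\,sup}$ argument (Step~3) with no appeal to decay, and unicity from the Riesz decomposition (Lemma~\ref{lem:dominated by a potential}): if $\mu_\sigma\asymp\mu_{\tilde\sigma}$ then $\mu_\sigma$ is dominated by a potential, hence zero. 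Only once minimality and unicity are in hand does the paper deduce the exponential decay you assumed. So the missing ingredient in your proof is exactly the reduced-measure machinery that the paper develops for this purpose.
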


In fact, we prove more. Theorem \ref{theo:geometric-boundary} also gives important application of the above identification like a representation of any conformal measure as a convex combination of minimal ones and the exponential decay of $\G_r(\mathbf{1}_{\Sigma \times\{g\} })$, that is 
\[ \limsup_{n \to \infty} \max_{y \in \Sigma, |\gamma|=n} \sqrt[n]{\G_r(\mathbf{1}_{\Sigma \times\{g\} })\left(y, \gamma \right)} < 1.\]  
Furthermore, it also follows easily Theorem \ref{theo:geometric-boundary} that there is a bijection from $\partial G$ to $\mathcal{M}_r$. However, by a refinement of  Theorem \ref{mainthmA}, one obtains a Hölder continuous version of this statement (see Theorem \ref{theo:continuidade}).

\begin{maintheorem} Under the assumptions of Theorem \ref{mainthmA} and for any $1 \leq r \leq 1/\rho$, the map $\partial G \to \mathcal{M}_r$ induced by $\sigma \to \mu_\sigma$ is a Hölder continuous bijection with Hölder continuous inverse with respect to a different exponent.  
\end{maintheorem}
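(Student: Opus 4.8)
The plan is to upgrade the bijection $\sigma \mapsto \mu_\sigma$ of Theorem \ref{theo:B} to a bi-Hölder homeomorphism by quantifying the convergence in the definition of $\mu_\sigma$ using the refined Ancona--Gou\"ezel--Lalley estimates. First I would fix the visual metric $d_\epsilon$ on $\partial G$ coming from the Gromov product, say $d_\epsilon(\sigma,\sigma') \asymp e^{-\epsilon (\sigma|\sigma')}$, and correspondingly a natural metric on $\mathcal{M}_r$; the obvious candidate is to pull back a metric via the Martin kernels, i.e.\ declare two boundary measures close if the ratios $\G_r(\mathbf{1}_{\Sigma\times\{h\}})(\,\cdot\,)/\G_r(\mathbf{1}_{\Sigma\times\{\id\}})(\,\cdot\,)$ agree up to a small error for all $h$ in a large ball, with a geometric weight on the ball radius. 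The content of the theorem is then that these two metrics are Hölder equivalent.

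The core estimate I would extract from the refinement of Theorem \ref{mainthmA} (the version quantifying fluctuations through the Gromov product, i.e.\ the second part of Theorem \ref{theo:Ancona-Gouezel inequalities}) is that if $z$ lies within bounded distance of the geodesic $[g,h]$ then the multiplicative defect in
\[ \G_r(\mathbf{1}_{\Sigma\times\{h\}})(\,\cdot\,,g) \asymp \G_r(\mathbf{1}_{\Sigma\times\{h\}})(\,\cdot\,,z)\,\G_r(\mathbf{1}_{\Sigma\times\{z\}})(\,\cdot\,,g) \]
is not merely bounded but decays exponentially in the distance from $z$ to the endpoints. Combined with the exponential decay $\limsup_n \max_{y,|\gamma|=n} \sqrt[n]{\G_r(\mathbf{1}_{\Sigma\times\{g\}})(y,\gamma)} < 1$ already recorded after Theorem \ref{theo:B}, this gives that for $\xi_n \to \sigma$ the ratio defining $\mu_\sigma(\mathbf{1}_{\Sigma\times\{h\}})$ is reached at a geometric rate governed by the Gromov product $(\sigma|h)$. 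Concretely: if two sequences $(\xi_n)$, $(\eta_n)$ converge to points $\sigma,\sigma'$ with $(\sigma|\sigma')$ large, then along the common initial segment of the geodesics the Green ratios are forced to agree up to an error exponentially small in $(\sigma|\sigma')$, for every $h$ in the ball of radius comparable to $(\sigma|\sigma')$; summing the geometric tail over the remaining $h$ shows $\|\mu_\sigma - \mu_{\sigma'}\|$ (in the chosen metric on $\mathcal{M}_r$) is $\lesssim e^{-c(\sigma|\sigma')} \asymp d_\epsilon(\sigma,\sigma')^{c/\epsilon}$, which is one-sided Hölderness. For the inverse, I would run the argument in reverse: if $\mu_\sigma$ and $\mu_{\sigma'}$ are close, then in particular the ratios $\G_r(\mathbf{1}_{\Sigma\times\{h\}})/\G_r(\mathbf{1}_{\Sigma\times\{\id\}})$ are close for $h$ ranging over a large ball; choosing $h$ far along the geodesic ray to $\sigma$ and invoking the lower bound direction of the Ancona inequality (which is genuine, not just an upper estimate) forces the geodesics to $\sigma$ and $\sigma'$ to fellow-travel for a long time, hence $(\sigma|\sigma')$ large, giving the reverse Hölder bound with a possibly different exponent — accounting for the asymmetry the theorem explicitly allows.

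The main obstacle I anticipate is making the two metrics genuinely comparable rather than just proving convergence: one must verify that the "Martin metric" one puts on $\mathcal{M}_r$ is independent (up to Hölder equivalence) of the choices involved — the weighting of the ball radius, the particular exhausting sequences $\xi_n$, and the fiber coordinate $x \in \Sigma$ — and here the fact that the local $\Sigma$-dependence vanishes at infinity (part of Theorem \ref{theo:B}, and quantitatively a consequence of the fluctuation bound) is exactly what is needed, but it has to be made uniform. A secondary technical point is the case $r = 1/\rho$: one cannot appeal to a spectral gap for $\G_r$ there, so the decay rates must be harvested purely from the geometric Ancona--Gou\"ezel--Lalley inequalities, which is precisely why Theorem \ref{mainthmA} was proven uniformly in $r \in [1,1/\rho]$; I would check that every constant in the Hölder estimate can be chosen independently of $r$ in this range. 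Once uniformity and the two-sided geometric decay are in hand, the bi-Hölder statement follows by assembling the pieces, and I would cite Theorem \ref{theo:continuidade} for the precise exponents.
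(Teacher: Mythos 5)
Your plan follows essentially the same route as the paper's proof (Theorem \ref{theo:continuidade}): you metrize $\mathcal{M}_r$ by geometrically weighted differences of (log) Martin kernels, control the terms with $|h|$ up to roughly half the Gromov product via the Gou\"ezel--Lalley fluctuation estimate and the remaining terms via the geometric tail of the weights, and obtain the inverse H\"older bound, with a different exponent, from a lower bound at the branch point of the two geodesic rays. The paper implements exactly this, with the specific coefficients $c_h$ (normalized by the ball growth $\mathfrak{h}$ and the linear growth of $|\log\G_r(\X_h)(x,\id)|$ from Theorem \ref{theo:geometric-boundary}(v)) accounting for the two exponents $\alpha$ and $\beta$.
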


The above results have the following, canonical application to regular covers of convex-cocompact geodesic spaces through the coding construction in \cite{ConstantineLafontThompson:2020}. We recall the definition of a regular cover in our setting.  
Assume that $X$ is a CAT(-1)-space (see, e.g., \cite{DasSimmonsUrbanski:2017}) and that $\Gamma$ is a discrete subgroup of the isometries of $X$ which acts convex-cocompactly on $X$. Then, as it is well-known, $X/G$ is a local CAT(-1)-space with compact convex core. Now assume that $Y$ is a cover of $X/G$. We then refer to $Y$ as a regular cover if there exists a normal subgroup $N$ of $\Gamma$ such that $X/N$ and $Y$ are isometric. In this setting, the above provides a complete description of the space of $\delta(N)$-conformal measures for $N$ (for details and the proof, see Theorem \ref{theo:geometric-application}).  The following theorem both complements and extends the recent result by Shwartz in \cite{Shwartz:2019} for cocompact Fuchsian groups and $s$-conformal measures with $s > \delta(N)$.  

\begin{maintheorem} \label{maintheo:regular cover} Assume that $N$ is non-elementary, that $G:=\Gamma/N$ is word hyperbolic and that the geodesic flow associated with $X/N$ is topologically transitive.  Then the set of minimal, $\delta(N)$-conformal measures and $\partial G$ coincide.       
\end{maintheorem}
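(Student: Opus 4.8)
The plan is to deduce Theorem~\ref{maintheo:regular cover} from Theorems~\ref{theo:B} and~\ref{theo:continuidade} by presenting the geodesic flow on $X/N$ as a group extension of a Markov map and transporting the resulting description of the Martin boundary back to the geometric side. Since $\Gamma$ acts convex-cocompactly on the CAT(-1)-space $X$, the construction of \cite{ConstantineLafontThompson:2020} represents the geodesic flow on the non-wandering part of $X/\Gamma$, up to a finite-to-one Hölder semiconjugacy, as a suspension flow over a topologically mixing subshift of finite type on a finite alphabet, with strictly positive Hölder roof function $\ell$ recording the geometric displacement along a symbolic orbit segment. Fixing the exponent of interest $s:=\delta(N)$ and letting $\mu$ be the equilibrium state of the Hölder potential $-s\,\ell$, the base becomes a probability-preserving, topologically mixing, noninvertible Markov map $\theta$ with Hölder log-Jacobian, while $N\trianglelefteq\Gamma$ is encoded by the locally constant cocycle $\kappa\colon\Sigma\to G$, $G=\Gamma/N$, recording the deck transformation effected by each symbolic transition; the associated extension $T$ then codes the geodesic flow on $X/N$. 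The hypothesised topological transitivity of that flow passes to $T$; non-elementarity of $N$, hence of $\Gamma$, yields topological mixing of $\theta$; $G$ is non-elementary word hyperbolic by hypothesis; and the reversibility $\G_R(\1_{\Sigma\times\{g\}})(\,\cdot\,,\id)\asymp\G_R(\1_{\Sigma\times\{\id\}})(\,\cdot\,,g)$ follows from Remark~\ref{rem:hyperbolic-amenable} together with the time-reversal symmetry of $\ell$. Thus the hypotheses of Theorems~\ref{mainthmA}, \ref{theo:B} and~\ref{theo:continuidade} are met.

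Next I would set up the dictionary between the two notions of conformal measure and pin down the relevant value of the Green parameter. The coding identifies, affinely, the cone of $\delta(N)$-conformal densities for $N$ (supported on $\Lambda(N)\subset\partial X$) with the cone of $\sigma$-finite conformal measures for $T$ at the critical parameter, and since an affine bijection of cones preserves extreme rays it matches minimal $\delta(N)$-conformal densities with minimal conformal measures of $T$. That the critical parameter is precisely $r=1/\rho$ is a computation in thermodynamic formalism: with the above normalisation one has $1/\rho=e^{P(-\delta(N)\ell)}$, where $P$ is the non-extended pressure, and at $r=1/\rho$ the operator $r\cL$ is, after the eigenfunction conjugation, the unnormalised transfer operator $\hat{\cL}_{-\delta(N)\ell}$ whose $G$-extended spectral radius equals one by the characterisation of $\delta(N)$ as the exponent of convergence of $N$; its eigenmeasures at eigenvalue one are exactly the $\delta(N)$-conformal densities. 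Since $G$, being non-elementary hyperbolic, is non-amenable, Kesten's criterion for group extensions gives $\rho<1$, so $1/\rho>1$ lies in the admissible range, and moreover $T$ is totally dissipative, so that $\G_{1/\rho}$ is well defined.

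Assembling these facts, I would argue as follows. Applying Theorem~\ref{theo:B} at $r=1/\rho$ produces a bijection $\sigma\mapsto\mu_\sigma$ from $\partial G$ onto the set of minimal conformal measures of $T$ at the critical parameter, which Theorem~\ref{theo:continuidade} upgrades to a Hölder continuous bijection with Hölder continuous inverse. Composing with the affine cone isomorphism above, which carries minimal objects to minimal objects, identifies $\partial G$ with the set of minimal $\delta(N)$-conformal measures for $N$, which is the assertion of Theorem~\ref{maintheo:regular cover}; the further statements of Theorem~\ref{theo:geometric-application}, namely the representation of an arbitrary $\delta(N)$-conformal measure as a convex combination of minimal ones and the exponential decay estimate, are inherited in the same way from Theorem~\ref{theo:geometric-boundary}.

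The abstract input being granted, the work lies entirely in the two compatibility statements used above. One must verify that the finite-to-one Hölder semiconjugacy of \cite{ConstantineLafontThompson:2020} transports conformal densities on $\Lambda(N)$ to $\sigma$-finite conformal measures on $\Sigma\times G$ without collapsing or creating mass --- in particular across the unavoidable overlaps of a Markov coding --- and that it genuinely intertwines the two notions of minimality. The more delicate point, and the main obstacle, is the parameter identification: proving that the bottom exponent $s=\delta(N)$ is matched with $r=1/\rho$ rather than with an interior value of $[1,1/\rho]$. This amounts to identifying the Gurevich spectral radius of the skew product with the exponential growth rate of $N$, and is precisely where the thermodynamic formalism of the extension must be combined with the geometry of the convex-cocompact action.
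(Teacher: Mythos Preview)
Your strategy matches the paper's proof of Theorem~\ref{theo:geometric-application}: code via \cite{ConstantineLafontThompson:2020}, identify the critical parameter $R$ with $\delta(N)$ through a comparison with the Poincar\'e series of $N$, establish an affine bijection between the two notions of conformal measure, and invoke Theorem~\ref{theo:geometric-boundary}. You also correctly isolate where the work lies, namely the parameter identification and the measure dictionary.

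There is, however, one genuine gap. The reversibility hypothesis $\G_R(\X_g)(\,\cdot\,,\id)\asymp\G_R(\X_{\id})(\,\cdot\,,g)$ does \emph{not} follow from Remark~\ref{rem:hyperbolic-amenable}: that remark yields only non-amenability, hence transience and $R>1$, and says nothing about symmetry. Nor does an unspecified ``time-reversal symmetry of $\ell$'' suffice. What is needed is an involution $\iota$ on the alphabet (hence on words) that reverses orbits \emph{and} inverts the cocycle, i.e.\ $\kappa_{\iota w}=\kappa_w^{-1}$; a generic Markov coding in the style of \cite{ConstantineLafontThompson:2020} carries no such structure. The paper obtains it by invoking Adachi's generalisation \cite{Adachi:1986} of Rees' refinement \cite{Rees:1981}, which upgrades the partition to one admitting a time-reversal involution; only then does $d(\kappa_w(\mathbf{o}),\mathbf{o})=d(\kappa_{\iota w}(\mathbf{o}),\mathbf{o})$ give the required Green symmetry. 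Without this refinement the hypotheses of Theorems~\ref{mainthmA}--\ref{theo:continuidade} are not verified and your argument does not close.

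A minor point: non-elementarity of $N$ does not by itself make the coded subshift topologically mixing; the paper passes to a mixing component of an iterate of the return map.
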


This result might be seen as a further contribution to a list of analogies between the ergodic behaviour of  
the geodesic flow on regular covers and random walks on the covering group. Namely, even though there does not exist a complete dictionary, there are several parallel results, like Rees' version of Polya's result on the transience of the simple random walk for  $\Z^d$-covers (\cite{Rees:1981}), or Brooks' amenability criterium (\cite{Brooks:1985}) in the sprit of Kesten (\cite{Kesten:1959a}).

\section{Group extensions of Markov maps}
Recall that a Markov map (or Markov fibred systems) is defined as follows  (see, e.g.  \cite{AaronsonDenkerUrbanski:1993,Aaronson:1997}).
\begin{defn}\label{def:Markov_map} Suppose that $(\Omega,\mathcal{B},\mu)$ is a standard probability space and $\alpha$ is an at most countable partition of $\Omega$ into measurable sets of strictly positive measure. We refer to $(\Omega,\theta,\mu,\alpha)$ as a {Markov map} if, for all $a,b \in \alpha$,
\begin{enumerate}
\item  $\theta|_a : a \to \theta(a)$ is invertible, bimeasurable and non-singular,
\item  either $\mu(a \cap \theta(b)) = 0$ or $\mu(a \cap (\theta(b))^\mathbf{c})=0$,
\item and, for $\alpha_n:= \left\{ a_1 \cap \theta^{-1}a_2 \cdots \cap \theta^{n-1}a_{n} : a_i \in \alpha,  i=1, \ldots, n \right\}$, the $\sigma$-algebra generated by $\{\alpha_n:n>0\}$ is equal to $\mathcal{B}$ up to sets of measure $0$.
\end{enumerate}
\end{defn}
Each Markov fibred system is a factor of a topological Markov chain $(\Sigma,\theta)$ where $\theta$ is the left shift $\sigma$ acting on
\[ \Sigma := \left\{ (a_i: i \in \N): a_i \in \alpha \hbox{ and } \mu(a_{i+1}) \cap \theta(a_i))> 0 \; \forall i = 1,2, \ldots \,\right\}\]
which can be deduced from the following.
Set $\cW^1 := \alpha$, and for $w_i \in \cW^1$ ($i=1,\ldots,n$) we say that  $w = (w_1 \ldots w_n)$ is an \emph{admissible word of length} $n$ if $\theta(w_i) \supset w_{i+1}$ for $i=1, \ldots, n-1$.
The {set of admissible words of length $n$} will be denoted by $\cW^n$, the length of $w \in \cW^n$ by $|w|$ and the set of all admissible words by $\cW^\infty = \bigcup_n \cW^n$.
Then
\begin{equation}\label{eq:correspondence_cylinders_words} \cW^n \to \alpha_n,\quad  (w_1 \ldots w_n) \mapsto [w_1 \ldots w_n] := \bigcap_{k=1}^{n} \theta^{-k+1}(w_k) \end{equation}
defines a bijection between $\cW^n$ and $\alpha_n$. Moreover, \eqref{eq:correspondence_cylinders_words} combined with \emph{(iii)} in Definition \ref{def:Markov_map} allows to lift $\mu$ to a probability measure $\mu^\ast$ on $\Sigma$ such that the limit of \eqref{eq:correspondence_cylinders_words} as $n$ tends to infinity defines a measure theoretical isomorphism between $(\Sigma,\mu^\ast)$ and  $(\Omega,\mu)$ which extends to a conjugation of $(\Sigma,\mu^\ast,\sigma)$ and  $(\Omega,\mu,\theta)$. Therefore, by abuse of notation, we identify both systems with
$(\Sigma,\mu,\theta,\alpha)$.

An important consequence of this identification is that it allows 
to effectively describe the preimage structure and induces a topology on $\Sigma$ such that $\theta$ is uniformly expanding.
That is, each $w \in \cW^n$ can be identified with an inverse branch of $\theta^n$ as follows. Since  $\theta^n$ maps ${[w]}$ injectively onto its image, its inverse $\tau_w: \theta^n([w]) \to [w]$ is well defined and by \emph{(i)} in Definition \ref{def:Markov_map},
\[0 <  \varphi_w(x) := \frac{d \mu \circ \tau_w}{d \mu}(x) < \infty \]
for $\mu$-a.e. $x \in \theta^n([w])$. Furthermore, $\Sigma$ comes with a canonical topology generated by $\{[w] : w \in \cW^\infty\}$ which coincides with the topology induced by the metric  $d_r$ defined by, for any $r \in (0,1)$,
 \[ d_r((x_i),(y_i)) := r^{\min\{i: x_i \neq y_i\}}.\]
The topology allows to define topological transitivity and mixing as follows. We refer to $(\Sigma,\theta)$ as \emph{topologically transitive} if for all $a,b \in \alpha$, there exists $n_{a,b}\in \N$ such that  $\mu(\te^{n_{a,b}}(a)\cap b)>0$ and
 as \emph{topologically mixing} if for all $a,b \in \alpha$, there exists $N_{a,b}\in \N$ such that $\mu(\te^{n}(a)\cap b)>0$ for all $n \geq N_{a,b}$.

\begin{defn}\label{def:gm-map} We refer to the Markov map $(\Sigma,\theta,\mu,\alpha)$ as a Gibbs-Markov map  of finite type
if $\alpha$ is finite, $(\Sigma,\theta)$ is topologically mixing and
there exists $C>0 $, $r \in (0,1)$ such that, for all $w \in \cW^\infty$ and a.e. $x,y \in X$,
\[ \left|\log{\varphi_w(x)} - \log{\varphi_w(y)}\right| \leq C d_r(x,y).\]
\end{defn}

The key feature of a the Gibbs-Markov property stems from the fact that the transfer operator $\cL_\theta: L^1(\mu) \to L^1(\mu)$ of $\theta$, which is defined as the dual of $U_\theta: L^\infty(\mu) \to L^\infty(\mu) $, $f \mapsto f\circ \theta$ and can be written as
\[ \cL_\theta(f) := \sum_{w \in \cW} \varphi_w  f\circ \tau_w,  \]
acts with a spectral gap on the space of Hölder continuous functions. This implies that there exists a unique invariant probability $m$ absolutely continuous with respect to $\mu$ and, in particular, that $\log dm/d\mu$ is Hölder continuous and $(\Sigma,\theta,m,\alpha)$ also has the Gibbs-Markov property (see  \cite{AaronsonDenkerUrbanski:1993,Aaronson:1997,Sarig:2003a}).

Now suppose that $G$ is a discrete group and that $\kappa: \Sigma \to G$ is a map such that $\kappa$ is measurable with respect to $\alpha$. We then refer to the skew product
\[ T: \Sigma \times G \to  \Sigma \times G, (x,g) \to (\theta(x), g \kappa(x))) \]
as a group extension. Furthermore, observe that
$\kappa^n: \Sigma \to G$, $x  \mapsto \kappa(x) \cdot \kappa(\theta(x)) \cdots \kappa(\theta^{n-1}(x)) $
is measurable with respect to $\alpha_n$. Therefore, for $w \in \cW^n$, we define $\kappa_w$ as $\kappa_w:= \kappa^n(x)$ for some $x \in [w]$. Moreover, by a slight abuse of notation, let $\tau_w$ also refer to the
inverse branch of $T^n$ on $[w,g]$. That is $\tau_w(x,g) := (\tau_w(x), g \kappa_w^{-1})$, whenever $\tau_w(x)$ is well defined (i. e., $x \in \theta^n([x])$). Moreover, for $f : \Sigma \times G \to \R$, set   $f_w(x,g) := f \circ \tau_w(x,g)$ for $x \in \theta^n([x])$ and $f_w(x,g) =0$ otherwise.
 The iterates of the transfer operator associated with the group extensions now can be written in short form as
\begin{align*}
\cL^n(f)(x,g) & := \sum_{v \in \cW^n} \varphi_v(x) f_v(x,g).
\end{align*}
The following Lemma provides an important estimate for the distortion of the iterates of $\cL$.
\begin{lemma} \label{lemma:distortion of L}
Suppose that $T$ is a topologically transitive extension of a Gibbs-Markov map $\theta$ of finite type. Then, for each $h \in G$, there exist $K_h > 0$ and $N_h \in \N$ with the following property.
For all  $x,\tilde{x} \in \Sigma$,  $g, \tilde{g} \in G$ with $h = g^{-1}\tilde{g}$, $m \in \N$, $L \geq 0$ and
$f:\Sigma \times G \to [0,\infty)$ such that,  for $z,\tilde{z}$ in the same cylinder of length $m$, either $f(z)=f(\tilde{z})=0$ or $\left|  \textstyle  {f(z)}/{f(\tilde{z})} -1 \right|\leq L
$, there
exists $k \leq N_h$ such that for all $n \geq m$
\[ \cL^n(f)(x,g) \leq  \left( K_{\tilde{g}^{-1}g} (L + 1) \right) \cL^{n+k}(f)(\tilde{x},\tilde{g}) . \]
\end{lemma}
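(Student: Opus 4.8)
The plan is to reduce the claimed comparison to two ingredients: a "bounded-distortion" control that lets us replace $f$ by a multiple of a single indicator-type weight within each cylinder of length $m$, and a "connectivity" argument supplied by topological transitivity that moves the base point and the group coordinate from $(\tilde x,\tilde g)$ to $(x,g)$ at the cost of prepending a fixed admissible word. First I would fix $h=g^{-1}\tilde g$ and use topological transitivity of the Gibbs--Markov map together with the fact that $\kappa$ takes finitely many values (since $\alpha$ is finite) to produce, for each such $h$, a finite collection of admissible words and a bound $N_h$ on their lengths realizing every needed transition in the $G$-coordinate; this is where $N_h$ and the dependence of $K_h$ on $h$ enters. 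The point is that since $\kappa_w$ runs over a finite set for words of bounded length and the extension is topologically transitive, one can connect the cylinder containing $\tilde x$ (with group shift $\tilde g$) to the cylinder containing $x$ (with group shift $g$) by some admissible word $u$ with $|u|=k\le N_h$ and $\kappa_u = \tilde g^{-1} g$ in the appropriate normalization.

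Next I would exploit the Gibbs--Markov property. Writing $\mathcal L^{n}(f)(x,g)=\sum_{v\in\cW^n}\varphi_v(x) f_v(x,g)$, the hypothesis on $f$ says that on each cylinder of length $m$ the function $f$ is either identically zero or varies multiplicatively by at most $1+L$. Splitting each $v\in\cW^{n}$ (with $n\ge m$) as a prefix of length $m$ followed by a suffix, and using the chain rule $\varphi_v = \varphi_{v'}\cdot(\varphi_{v''}\circ\theta^{m-|v'|}\cdots)$ together with the uniform Hölder bound $|\log\varphi_w(x)-\log\varphi_w(y)|\le C d_r(x,y)$, one controls $f_v(x,g)$ against an average of $f$ over the relevant depth-$m$ cylinder up to the factor $(1+L)$ and a distortion constant $e^{C/(1-r)}$ coming from the geometric series $\sum_r r^j$. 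This turns $\mathcal L^{n}(f)(x,g)$ into something comparable, up to $(L+1)$ times a universal constant, to $\mathcal L^{n}$ applied to a function constant on depth-$m$ cylinders, which is exactly the situation in which the transfer operator behaves almost like a Markov transition operator.

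Then I would run the connectivity step at the operator level: prepend the connecting word $u$ found above. Concretely, because $\varphi_u$ is bounded below by a positive constant $c_h>0$ on its (nonempty) domain — again using finiteness of $\alpha$ and the Gibbs--Markov bound — and because composing with $\tau_u$ shifts the group coordinate by $\kappa_u^{-1}=\bigl(\tilde g^{-1}g\bigr)^{-1}$, one gets termwise
\[
\mathcal L^{n+k}(f)(\tilde x,\tilde g)\ \ge\ c_h\,\mathcal L^{n}\bigl(f\bigr)\bigl(\theta^{k}(\tilde x\text{-lift}),\,\tilde g\kappa_u\bigr)
\]
evaluated so that the second coordinate equals $g$; comparing this with the bound for $\mathcal L^{n}(f)(x,g)$ from the previous paragraph, and absorbing the finitely many choices of $u$, the base-point change (another application of the Hölder distortion estimate, since $x$ and the image point lie in a common cylinder whose depth we can take large, namely $\ge m$), and the constants $e^{C/(1-r)}$ and $1/c_h$ into a single constant $K_h$, yields the asserted inequality with $k\le N_h$. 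The same $k$ works for all $n\ge m$ because the connecting word is chosen once and the tail suffixes of length $\ge m$ are handled uniformly.

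The main obstacle I anticipate is the bookkeeping in the connectivity step: one must simultaneously (a) match the $G$-coordinate exactly, which forces a careful choice among the finitely many words with the prescribed value of $\kappa_u$, (b) ensure the prefix length $k$ is bounded by a constant $N_h$ depending only on $h$, which uses topological transitivity in the quantitative form "any atom can be reached from any atom in a bounded number of steps" — here one must be careful that for a general topologically transitive (not mixing) extension the number of steps is bounded but the parity/residue may be constrained, so $k$ cannot be taken to be a single fixed number independent of the cylinders unless one allows it to range up to $N_h$, exactly as in the statement — and (c) keep the base-point distortion under control, which is why the hypothesis is phrased with cylinders of length $m$ and the conclusion only for $n\ge m$: it guarantees the relevant points always share a cylinder deep enough for the Gibbs--Markov estimate to give a constant independent of $n$. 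Getting all three compatible with a single constant $K_h$ is the technical heart; the rest is the standard Gibbs--Markov distortion calculus.
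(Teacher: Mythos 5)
Your plan is essentially the paper's proof: a termwise Gibbs--Markov distortion estimate (valid because for $n\ge m$ the preimages $\tau_v(x),\tau_v(\tilde x)$ lie in a common $n$-cylinder with the same group label, so the $(1+L)$-hypothesis on $f$ applies), followed by prepending a single connecting word of length $k\le N_h$ with prescribed $\kappa$-value, obtained from topological transitivity of the extension together with finiteness of the alphabet, whose $\varphi$-weight is bounded below; the paper merely performs the connection in two stages (first a word with $\kappa=\id$ to change the base cylinder, then a word with $\kappa=g^{-1}\tilde g$ to shift the group coordinate), which your one-word version reproduces. One caveat: your parenthetical claim that $x$ and the point $\tau_u(\tilde x)$ can be arranged to share a cylinder of depth $\ge m$ is neither achievable with $k\le N_h$ independent of $m$ nor needed --- a common $1$-cylinder (same atom, ensuring the same admissible words $v$) suffices, precisely because the length-$m$ hypothesis is applied to the preimage points $\tau_v(\cdot)$ rather than to the base points, and similarly the orientation of $\kappa_u$ versus $\kappa_u^{-1}$ in your displayed inequality needs the fix you already anticipate.
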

\begin{proof}
If $g = \tilde{g} $ and  $x,\tilde{x}$ are in the same cylinder and $n \geq m$, then
\begin{align*}
& \left| \cL^n(f)(x,g) - \cL^n(f)(\tilde{x},\tilde{g}) \right|
\\
 \leq &
  \sum_{v\in \cW^n}  \left|\left(  \varphi_{v}(x) - \varphi_{v}(\tilde{x}) \right) f_v(x,g)  \right| +
  \sum_{v\in \cW^n}   \varphi_{v}(\tilde{x})  \left| \left( f_v(x,g)  -  f_v(\tilde{x},g) \right)  \right|
  \\
\leq & C_\varphi  d_r(x,\tilde{x}) \sum_{v\in \cW^n}  \varphi_{v}(x) |f_v(x,g)| +
C_\varphi \sum_{v\in \cW^n}  \varphi_{v}(x) |f_v(x,g)| \cdot \left| 1 - \textstyle  \frac{f_v(\tilde{x},g)}{f_v({x},g)}   \right|\\
\leq &  C_\varphi \left( d_r(x,\tilde{x}) + \sup_{v \in \cW^n} \left| 1 - \textstyle  \frac{f_v(\tilde{x},g)}{f_v({x},g)}   \right| \right)  \cL^n(f)(x,g) \leq C_\varphi(1+ L)  \cL^n(f)(x,g).
\end{align*}
Now assume that  $x,\tilde{x}$ are not in the same cylinder. Then, by transitivity, there exists $y$ in the same cylinder as $x$ and $k \in \N$ such that $T^k(y,g) = (\tilde{x},g)$, or, equivalently, there exists $v \in \cW^k$ with $(y,g) = \tau_v(\tilde{x},g)$. Hence,  by the above,
\begin{align*}
\cL^{n+k}(f)(\tilde{x},g) \geq \varphi_v(\tilde{x}) \cL^{n}(f)(y,g) \geq \frac{ \varphi_v(\tilde{x})}{1 + C_\varphi(L+1)}\cL^{n}(f)(x,g).
\end{align*}
Moreover, as $\theta$ is of finite type, $v \in \cW^k$ can be chosen within a finite set, which proves the assertion  for  $(x,g)$ and $(\tilde{x},g)$ with respect to $K_{\id} + K_{\id}L$, for some $K_{\id}$ sufficiently large. The proof of the general case is almost the same: For $(\tilde{x},\tilde{g})$, there exists by transitivity $w \in \cW^k$ such that $g\kappa^k(w) = \tilde{g}$ and $\tilde{x} \in \theta^k([w])$. As $\theta$ is of finite type and $\kappa_w = g^{-1}\tilde{g}$, $w$ again can be chosen from a finite set. Moreover,
$\tau_w(\tilde{x},\tilde{g}) \in \Sigma \times \{g\}$. Hence, by the above, for some $l \leq N_{\id}$ and any $n \geq m$,
\[
\cL^{n+k+l}(f)(\tilde{x},\tilde{g}) \geq \varphi_w(\tilde{x}) \cL^{n+l}(f)(\tau_w(\tilde{x}),g) \geq \frac{ \varphi_w(\tilde{x})}{K_{\id} + K_{\id} L}\cL^{n}(f)(x,g).
\]
The assertion of the Lemma then follows from this.
\end{proof}

\section{The Green operator}
In analogy to the Green functions  in the theory of random walks, we formally define a family of operators by \[\G_r := \sum_{n=0}^\infty r^{n} \cL^n.\] In order to be able to specify invariant function spaces, first observe that by invariance of $\mu$, we have that $\cL(\1)=\1$ and hence, $\G_r(\1) = (1-r)^{-1}\1$ for $r \in [0,1)$. However, $\G_r$ might act on functions with compact support for some $r \geq 1$. In order to determine the critical value for this action, set $R:= 1/\rho$, where
\[\rho:=\limsup_{n}\sqrt[n]{\cL^{n}(\X_{g})(x,h)} \, \hbox{ with } \, \X_{g}:= \mathbf{1}_{\Sigma\times \{g\}}.\]
Observe that, by transitivity, $\rho$ does not depend on $g,h \in G$ and $x \in \Sigma$. In particular,  $\G_r(\X_g)$ is a finite function for each $g \in G$ and $0 \leq  r < R$ by Hadamard's formula for the radius of convergence of a power series. Hence, if $\rho <1$ (which holds if $G$ is nonamenable, see \cite{Stadlbauer:2013}), we have to consider values of $r$ bigger than $1$.

The main idea behind the construction of an invariant space is to consider functions, whose local Hölder coefficients are non-constant and dominated by positive eigenfunctions of $\cL$. That is, for $\alpha >0$ and $f:\Sigma \times G \to \R$, we define a function $D_\alpha(f): X \to [0, \infty]$ which is constant on cylinders of length 1 by
\[
D_\alpha(f)(z,g) := \sup_{x,y \in [a,g]} \frac{\left|f(x)-f(y)\right|}{d(x,y)^{\alpha}} \quad  \hbox{ for all } z \in [a].
\]
Furthermore, we refer to $E_\rho$ as the set of all positive, H\"older continuous $\rho$-subharmonic functions, that is $E_\rho := \left\{ h :  \|D_\alpha(h)\|_\infty < \infty, h> 0,     \cL(h) \leq \rho h \right\}$, which is non-empty for $\alpha$ equal to the Hölder exponent of $\log \varphi$ if $|\cW|< \infty$ by the main result in \cite{Stadlbauer:2019}. We are now in position to define the following spaces of Hölder continuous and locally Hölder continuous functions.
\begin{align*}
\cH_\alpha & :=\left\{ f:X \to \R \Big|  \|f\|_\infty < \infty,  \|D_{\alpha}(f)\|_\infty < \infty  \right\}\\
\cHL  & :=\left\{ f:X \to \R \Big| \exists  h \in E_\rho \hbox{ s.t. } \left| f \right| \leq h,  D_{\alpha}(f) \leq h \right\}
\end{align*}

\begin{proposition} \label{prop:action of G_r}
 Suppose that $\|D_\alpha(\log \varphi) \|_\infty < \infty$ and that $\cL_\theta(1)=1$. Then  $\G_r$ acts on $\cH_\alpha$ as a bounded operator with respect to $\|\cdot \|_\infty + \|D_\alpha(\cdot) \|_\infty$ for each  $r \in [0,1)$.  For $r \in [0,R)$,
$\G_r$ acts on $\cHL$  and
 there exists a constant $C>0$ such that, for all $f \in\cHL$ and $h \in E_\rho$ with $|f|\leq h$ and $ D_{\alpha}(f)\leq h$,
\[ |\G_{r}(f)| \leq \frac{R}{R-r} h, \quad D_{\alpha}(\G_{r}(f)) \leq \frac{C R}{R-r} h . \]
\end{proposition}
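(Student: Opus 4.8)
The plan is to treat the three claims in sequence, bootstrapping from the sup-norm bound to the Hölder bound. First I would handle the boundedness of $\G_r$ on $\cH_\alpha$ for $r\in[0,1)$. Since $\cL(\1)=\1$, the operator $\cL$ is a contraction on $L^\infty$, so $\|\cL^n(f)\|_\infty\le\|f\|_\infty$ and the Neumann series $\sum_n r^n\cL^n(f)$ converges in sup-norm with $\|\G_r(f)\|_\infty\le (1-r)^{-1}\|f\|_\infty$. For the Hölder seminorm I would estimate $D_\alpha(\cL^n(f))$ using the same two-term split as in the proof of Lemma \ref{lemma:distortion of L}: one term picks up the bounded distortion $\|D_\alpha(\log\varphi)\|_\infty$ of the branches and is controlled by $\|\cL^n(|f|)\|_\infty\le\|f\|_\infty$, the other propagates $D_\alpha(f)$ and contracts by the factor $r^n$ together with the uniform contraction $r^{n\alpha}$ coming from $d(\tau_v x,\tau_v y)\le r^{\min\{\,\cdot\,\}}d(x,y)$ on cylinders. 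Summing the geometric series in $n$ gives $\|D_\alpha(\G_r(f))\|_\infty<\infty$, and collecting constants yields boundedness with respect to $\|\cdot\|_\infty+\|D_\alpha(\cdot)\|_\infty$.

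Next, for $r\in[0,R)$ I would prove the action on $\cHL$ together with the two displayed inequalities. Fix $f\in\cHL$ and $h\in E_\rho$ with $|f|\le h$ and $D_\alpha(f)\le h$. The key point is that $\cL$ is positive, so $|\cL^n(f)|\le\cL^n(|f|)\le\cL^n(h)\le\rho^n h$, the last step iterating $\cL(h)\le\rho h$. Hence $|\G_r(f)|\le\sum_n r^n\rho^n h=\sum_n (r/R)^n h=\frac{R}{R-r}h$, which is the first bound and also shows $\G_r(f)$ is well defined pointwise. For the second bound I again split $D_\alpha(\cL^n(f))$ into a distortion term and a propagation term. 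The distortion term is bounded by $C_\varphi$ times $\cL^n(|f|)\le\rho^n h$. The propagation term is bounded by $r^{n\alpha}$ (branch contraction) times $\cL^n(D_\alpha(f))\le\cL^n(h)\le\rho^n h$; absorbing the harmless $r^{n\alpha}\le 1$ factor, each term is $O(\rho^n h)$, so $D_\alpha(\G_r(f))\le r^n\cdot O(\rho^n)\,h$ summed over $n$, i.e. $\le\frac{CR}{R-r}h$ for a suitable universal $C$ depending only on $\alpha$ and $\|D_\alpha(\log\varphi)\|_\infty$. One must also check $\G_r(f)\in\cHL$, i.e. that the dominating function can be taken in $E_\rho$; replacing $h$ by $\max(1,CR/(R-r))\,h$, which is still in $E_\rho$ since $E_\rho$ is a cone closed under taking the relevant bound, does the job.

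The main obstacle I expect is making the Hölder estimate for $\cL^n(f)$ uniform and correctly localized when $f$ is only in $\cHL$ rather than $\cH_\alpha$: because $D_\alpha(f)$ is now a function (dominated by $h$) and not a constant, the two-term split must be performed cylinder by cylinder and then summed against $\varphi_v$, using that $h\circ\tau_v$ appears in the right place so that $\sum_v\varphi_v\,h\circ\tau_v=\cL^n(h)$ reassembles cleanly. Care is also needed with the interplay of the branch-contraction exponent $r\in(0,1)$ from the metric $d_r$ and the spectral radius $\rho$ — one has to keep them notationally distinct and verify that the resulting series $\sum_n (r\rho)^n$ and $\sum_n r^n r^{n\alpha}\rho^n$ both converge precisely when $r<R=1/\rho$. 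Once the bookkeeping of constants is organized, the rest is the routine geometric-series summation sketched above.
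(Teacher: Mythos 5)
Your proposal is correct and follows essentially the same route as the paper: the sup-norm bound via $\cL(\1)=\1$, the two-term split of the Hölder increment into a distortion term (controlled by the bounded distortion constant times $\cL^n(|f|)$) and a propagation term (metric contraction times $\cL^n(D_\alpha(f))$), and for $r<R$ the domination $\cL^n(h)\leq\rho^n h$ followed by geometric summation, exactly as in the paper's proof (which merely keeps the contraction factor to write the propagation term as $\G_{2^{-\alpha}r}(D_\alpha(f))$, giving a marginally sharper constant than your absorbed version). Your closing remarks about localizing $D_\alpha(f)$ cylinder by cylinder and rescaling the dominating function within the cone $E_\rho$ address precisely the points the paper handles implicitly.
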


\begin{proof} For $r< 1$ and $f \in \cH_\alpha$, we have that \[\|\G_{r}(f)\|_\infty \leq \|f\|_\infty \|\G_{r}(1)\|_\infty  = \|f\|_\infty  \sum_n r^n = \|f\|_\infty (1-r)^{-1}.\]
It remains to show for the first part that $\|D_{\alpha}(\G_{r}(f))\|_\infty < \infty$. In order to do so, assume that $x,y \in [v]$ for some $v \in \cW^1$ and recall that, by the Gibbs-Markov property, there  exists $C_\varphi$, independent of $x,y$ and $w\in \cW^{n}$ such that $|1 - \varphi_w(x)/\varphi_w(y)| \leq C_\varphi d(x,y)^\alpha$ and $\varphi_w(y)/\varphi_w(x) \leq C_\varphi$. Hence,
\begin{align*}
&  \left|\G_{r}(f)(x,g) - \G_{r}(f)(y,g) \right|/d(x,y)^\alpha\\
& \leq  \frac{1}{d(x,y)^\alpha} \sum_{n \in \N_0, w\in \cW^{n}}  r^n \left| \varphi_w(x) -  \varphi_w(y)\right| |f_w(x,g)| +  \varphi_w(y)  \left| f_w(x,g)  - f_w(y,g)\right| \\
& \leq \sum_{n \in \N_0, w\in \cW^{n}} r^n C_\varphi   \varphi_w(y)\|f\|_\infty  +  \frac{r^n}{2^{\alpha n}} \varphi_w(y) \|D_{\alpha}(f)\|_\infty     = \frac{C_\varphi}{1-r} \|f\|_\infty  +  \frac{1}{1-\frac{r}{2\alpha}} \|D_{\alpha}(f)\|_\infty.
\end{align*}
Suppose that $r< R$ and $f\in\cHL $. Then there is
$h \in E_\rho$ with $|f|\leq h$ and $ D_{\alpha}(f)\leq h$, and
\begin{align*}
|\G_{r}(f)| \leq \G_{r}(|f|) \leq \G_{r}(h) = \sum_{n=0}^\infty r^{n} \cL^n(h) = \sum_{n=0}^\infty (\rho r)^{n} h =  \frac{R}{R-r} h.\end{align*}
Hence, it remains to show that $D_{\alpha}(\G_{r}(f))\ll h$. By similar arguments, for $x,y \in [v]$ for some $v \in \cW^1$,
\begin{align*}
 &  \left|\G_{r}(f)(x,g) - \G_{r}(f)(y,g) \right|/d(x,y)^\alpha\\
& \leq  \frac{1}{d(x,y)^\alpha} \sum_{n \in \N_0, w\in \cW^{n}}  r^n \left| \varphi_w(x) -  \varphi_w(y)\right| |f_w(x,g)| +  \varphi_w(y)  \left| f_w(x,g)  - f_w(y,g)\right| \\
& \leq \sum_{n \in \N_0, w\in \cW^{n}} r^n C_\varphi   \varphi_w(y) |f_w(x,g)| +  \frac{r^n}{2^{\alpha n}} \varphi_w(y) D_{\alpha}(f)(\tau_w(x)g\kappa_w^{-1})   \\
& \leq C_\varphi \left( \G_{r}(|f|)(x,g) + \G_{2^{-\alpha} r}(D_{\alpha}(f))(x,g) \right) \leq C_\varphi \left( \frac{R}{R-r} + \frac{2^\alpha R}{2^\alpha R-r} \right) h(x,g).
\end{align*}
\end{proof}

In order to extend the action further to $r=R$, we introduce the following notion of transience in analogy to the theory of random walks. Now suppose that $\mu$ is a non-singular measure on $X$ with respect to $T$. We then
 refer to $T$ as \emph{transient} or \emph{$\rho$-transient} if  $\G_R(\X_{\id})(x,\id) <\infty$ for all $x \in \Sigma$.

\begin{proposition} \label{prop:finiteness of G_R} Assume that $T$ is a topologically transitive, $\rho$-transient extension of a Gibbs-Markov map of finite type. Then, for $f \in \cHL$ and $A \subset G$ finite, we have that 
 $\G_R(\X_A) \in E_\rho$, $\G_R(\X_A \cdot f) \in \cHL$ and
\begin{align*} |\G_R(\X_A \cdot f)| & \leq  \|\X_A \cdot f \|_\infty \cdot  \G_R(\X_A), \\
D_\alpha(\G_R(\X_A \cdot f)) & \leq C_\varphi \|\X_A \cdot f \|_\infty   \G_R(\X_A)  +  \|D_\alpha(\X_A \cdot f) \|_\infty  \G_{2^{-\alpha}R}(\X_A).
\end{align*}
\end{proposition}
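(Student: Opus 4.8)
The plan is to reduce all the claims to properties of the single function $\G_R(\X_{\id})$, using that $\cL$ commutes with left translations of $G$ (because $T$ does), so that $\G_R(\X_a)(x,g)=\G_R(\X_{\id})(x,a^{-1}g)$ and $\G_R(\X_A)=\sum_{a\in A}\G_R(\X_a)$. \emph{Finiteness} of $\G_R(\X_{\id})(x,k)$ at every point follows from Lemma~\ref{lemma:distortion of L} applied to $f=\X_{\id}$, which is constant on cylinders of length one (so $m=1$, $L=0$): comparing $(x,k)$ with a fixed $(x_0,\id)$ and summing against $R^n$ gives $\G_R(\X_{\id})(x,k)\le 1+K_k\,\G_R(\X_{\id})(x_0,\id)<\infty$ by $\rho$-transience. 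For a bound \emph{uniform in $k$} I would use a last-return (renewal) decomposition. Writing $q_k(x,g)$ for the taboo kernel of length-$k$ paths that start in the $\id$-fiber and avoid it afterwards, the Gibbs distortion at the splitting time yields
\[ \cL^n(\X_{\id})(x,g)\ \lesssim\ \sum_{m=0}^{n}\Bigl(\sup_{y\in\Sigma}\cL^m(\X_{\id})(y,\id)\Bigr)\,q_{n-m}(x,g). \]
Here $\sum_{m}R^m\sup_y\cL^m(\X_{\id})(y,\id)<\infty$ (bounded oscillation plus $\rho$-transience), while $\int\sum_{g\neq\id}q_k(x,g)\,d\mu$ is the ``survival probability'' of not having returned to the $\id$-fiber by time $k$, so $\sum_k R^k\int\sum_{g\neq\id}q_k(x,g)\,d\mu$ equals, up to the factor $1/(R-1)$, the $R$-weighted expected excursion length, which is finite precisely because the $R$-weighted first-return mass is $<1$ --- this is exactly the content of $\rho$-transience. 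Topological transitivity together with the Gibbs property upgrades this $\mu$-integrated bound to $\sup_{x,g}\sum_k R^k q_k(x,g)<\infty$, and the product of the two finite factors gives $\sup_{x,g}\G_R(\X_{\id})(x,g)<\infty$; hence $\G_R(\X_A)$ is bounded.

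\emph{Subharmonicity and positivity.} By monotone convergence $\cL\circ\G_R=\rho(\G_R-\id)$ on nonnegative functions, so $\cL(\G_R(\X_A))=\rho(\G_R(\X_A)-\X_A)\le\rho\,\G_R(\X_A)$, and $\G_R(\X_A)>0$ is immediate from topological transitivity of $T$.

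\emph{H\"older estimates.} For $x,y$ in the same cylinder $[v]$ of length one and $w\in\cW^n$, the value $(\X_A)_w(x,g)$ --- which is $1$ when $g\kappa_w^{-1}\in A$ and $x\in\theta^n([w])$, and $0$ otherwise --- does not depend on $x$, since $\theta^n([w])$ is a union of cylinders of length one. Hence in the telescoping estimate of Proposition~\ref{prop:action of G_r} the ``distortion of the test function'' term vanishes and only the ``$\varphi$-distortion'' term survives, giving $|\G_R(\X_A)(x,g)-\G_R(\X_A)(y,g)|\le C_\varphi\,d(x,y)^\alpha\,\G_R(\X_A)(y,g)$; in particular $\G_R(\X_A)$ has bounded oscillation on length-one cylinders, so $D_\alpha(\G_R(\X_A))\le C\,\G_R(\X_A)$ and, by the uniform bound above, $\|D_\alpha(\G_R(\X_A))\|_\infty<\infty$. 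Together with the previous step this shows $\G_R(\X_A)\in E_\rho$. For $\G_R(\X_A\cdot f)$ with $f\in\cHL$ (note $\X_A\in\cHL$, being dominated by a multiple of any $h\in E_\rho$ on the finitely many cylinders meeting $\Sigma\times A$), the pointwise bound $|\G_R(\X_A\cdot f)|\le\|\X_A\cdot f\|_\infty\,\G_R(\X_A)$ follows from $|(\X_A f)_w|\le\|\X_A f\|_\infty(\X_A)_w$, and in the H\"older estimate the $\varphi$-distortion term again contributes $C_\varphi\|\X_A f\|_\infty\,\G_R(\X_A)$, while the test-function term, using $(\X_A f)_w(x,g)-(\X_A f)_w(y,g)=(\X_A)_w(x,g)\,(f_w(x,g)-f_w(y,g))$, the contraction $d(\tau_w x,\tau_w y)^\alpha\le 2^{-\alpha n}d(x,y)^\alpha$, and $D_\alpha(\X_A f)=\X_A D_\alpha(f)$, contributes $\|D_\alpha(\X_A f)\|_\infty\sum_{n,w}(2^{-\alpha}R)^n\varphi_w(y)(\X_A)_w(y,g)=\|D_\alpha(\X_A f)\|_\infty\,\G_{2^{-\alpha}R}(\X_A)(y,g)$, where $\G_{2^{-\alpha}R}$ acts on $\cHL$ since $2^{-\alpha}R<R$ and $\G_{2^{-\alpha}R}(\X_A)\le\G_R(\X_A)$. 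This gives the two displayed inequalities, and since their right-hand sides are dominated by a fixed multiple of $\G_R(\X_A)\in E_\rho$, we conclude $\G_R(\X_A\cdot f)\in\cHL$.

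\emph{Main obstacle.} The delicate point is the uniform-in-$G$ bound on $\G_R(\X_A)$ in the first step: Lemma~\ref{lemma:distortion of L} alone produces only finiteness with a constant $K_k$ that degenerates as $k\to\infty$, and extracting a genuine uniform bound forces one to exploit the renewal/excursion structure and the strict inequality ``$R$-weighted first-return mass $<1$'' encoded in $\rho$-transience, together with transitivity and the Gibbs property to pass from $\mu$-integrated to pointwise estimates. Everything else is a minor variant of the computations already carried out for $r<R$.
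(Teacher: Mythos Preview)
Your treatment of pointwise finiteness (via Lemma~\ref{lemma:distortion of L} and left-invariance), positivity, the subharmonicity $\cL(\G_R(\X_A))=\rho(\G_R(\X_A)-\X_A)\le\rho\,\G_R(\X_A)$, and the two displayed H\"older estimates is correct and coincides with the paper's approach, which simply refers back to the computation in Proposition~\ref{prop:action of G_r}. You are also right that the one genuinely new issue at $r=R$ is the membership $\G_R(\X_A)\in E_\rho$: since the distortion estimate only yields $D_\alpha(\G_R(\X_A))\le C_\varphi\,\G_R(\X_A)$, the requirement $\|D_\alpha(\G_R(\X_A))\|_\infty<\infty$ in the definition of $E_\rho$ amounts to a uniform bound $\sup_{x,g}\G_R(\X_A)(x,g)<\infty$. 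The paper's proof says only ``as it easily can be verified'' here and gives no further detail, so your attempt goes beyond what the paper provides.

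Your renewal argument for this uniform bound, however, does not work. You identify $\int\sum_{g\ne\id}q_k(x,g)\,d\mu(x)$ with the survival probability $p_k=\mu\{y:\kappa^j(y)\ne\id,\ 1\le j\le k\}$ and then claim $\sum_kR^kp_k<\infty$ follows from the $R$-weighted first-return mass being $<1$. But $\rho$-transience (indeed already ordinary transience, which follows since $R\ge1$) forces the escape probability $p_\infty=\lim_kp_k$ to be \emph{strictly positive}, so for $R>1$ one has $\sum_kR^kp_k\ge p_\infty\sum_kR^k=\infty$. There is also a structural mismatch: you need $\sup_g\sum_kR^kq_k(\cdot,g)<\infty$ at each fixed $g$, not control of $\sum_kR^k\sum_gq_k$; Gibbs distortion and topological transitivity let you compare different base points $x\in\Sigma$ at the same group element, but give no mechanism for passing from a sum over $g$ back to a supremum over $g$. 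The last-exit decomposition therefore reduces the problem to one that is no easier than the original.
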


\begin{proof}
It follows from Lemma \ref{lemma:distortion of L} that
$\G_R(\X_{\id})(x,g) < \infty$ for all $(x,g) \in G$.
 As  $\G_R(\X_{\id})(x,g)$ is left invariant under multiplication by elements of $G$, it follows that  $\G_R(\X_g)(x,h)$ is finite for all $h\in G$. Hence, $\G_R(\X_A)(x,h) < \infty$ and, as it easily can be verified, $\G_R(\X_A) \in E_\rho$. The remaining assertions follow as in the proof above.    \end{proof}

\begin{remark}\label{rem:amenable}
Recall that a group is non-amenable if a strong isoperimetric inequality holds, that is
\[ \inf \left\{ \frac{|gA \triangle A| }{|A|} :  {A \subset G, |A| < \infty }  \right\}  > 0.\]
Moreover, non-amenability implies that $\rho < 1$ by \cite{Stadlbauer:2013}
and  $T: \Sigma \times G \to \Sigma \times G$ can not be conservative and ergodic for any measure by a result of Zimmer (\cite{Zimmer:1978a}) which implies that $T$ is transient (see, e.g., \cite[Prop. 2]{Stadlbauer:2019}). Hence, if $G$ is non-amenable, then  $R> 1$  in Proposition \ref{prop:action of G_r} and the assertions of Proposition \ref{prop:finiteness of G_R} hold. 
\end{remark}

As a consequence of Lemma \ref{lemma:distortion of L}, one immediately obtains the following independence of $  \G_r(f)(x,g)$ from $x$.
\begin{lemma} \label{lem:bounded distortion for the extended operator}
Assume that $T$ is a topologically transitive, $\rho$-transient
 extension of a Gibbs-Markov map of finite type and that there are $m \in \N$, $L \geq 0$ and $f:\Sigma \times G \to [0,\infty)$ such that,  for $z,\tilde{z}$ in the same cylinder of length $m$, either $f(z)=f(\tilde{z})=0$ or $\left|  \textstyle  {f(z)}/{f(\tilde{z})} -1 \right|\leq L
$. Then, for $1\leq r \leq R$ and $f$ such that   $\G_r(f)(x,\id) < \infty $,
\[ \left(K_{g^{-1}h}(L+1)\right)^{-1}   \G_r(f)(y,h) \leq \G_r(f)(x,g) \leq K_{h^{-1}g}(L+1)   \G_r(f)(y,h).\]
\end{lemma}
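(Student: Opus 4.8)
The plan is to deduce Lemma~\ref{lem:bounded distortion for the extended operator} from Lemma~\ref{lemma:distortion of L} by a straightforward limiting argument, together with the fact that under $\rho$-transience the Green operator $\G_R$ is still well-defined and finite. The key observation is that Lemma~\ref{lemma:distortion of L} already gives, for every $n \geq m$, an estimate of the form $\cL^n(f)(x,g) \leq K_{\tilde g^{-1}g}(L+1)\,\cL^{n+k}(f)(\tilde x,\tilde g)$ with $k = k(n) \leq N_h$ depending only on $h = g^{-1}\tilde g$; the point is to sum this over $n$ against the weights $r^n$ and compare the two power series.

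First I would fix $r \in [1,R]$ and $f$ as in the statement, with $\G_r(f)(x,\id) < \infty$; by left-invariance of the construction (as used in the proof of Proposition~\ref{prop:finiteness of G_R}) this finiteness propagates to $\G_r(f)(y,h)$ for every $h$, so all the series below converge. Then I would write
\[
\G_r(f)(x,g) = \sum_{n=0}^{\infty} r^n \cL^n(f)(x,g).
\]
For the finitely many terms with $n < m$ one uses a crude bound (or simply absorbs them, noting they are dominated by a fixed multiple of a single $\cL^{n+k}$ term via transitivity, exactly as in the ``not in the same cylinder'' step of Lemma~\ref{lemma:distortion of L}); the main contribution is $\sum_{n \geq m} r^n \cL^n(f)(x,g)$. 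Applying Lemma~\ref{lemma:distortion of L} termwise,
\[
\sum_{n \geq m} r^n \cL^n(f)(x,g) \leq K_{\tilde g^{-1}g}(L+1) \sum_{n \geq m} r^n \cL^{n+k(n)}(f)(\tilde x,\tilde g).
\]
Since $0 \leq k(n) \leq N_h$ and $r \geq 1$, we have $r^n \leq r^{n+k(n)}$, so the right-hand sum is bounded by $K_{\tilde g^{-1}g}(L+1)\sum_{n\geq m} r^{n+k(n)}\cL^{n+k(n)}(f)(\tilde x,\tilde g) \leq K_{\tilde g^{-1}g}(L+1)\sum_{j \geq m} r^{j}\cL^{j}(f)(\tilde x,\tilde g) \leq K_{\tilde g^{-1}g}(L+1)\,\G_r(f)(\tilde x,\tilde g)$, where in the middle step each $j$ is hit by at most one $n$ since the map $n \mapsto n+k(n)$ need not be injective but any over-counting only helps the inequality --- more carefully, one replaces the sum by $\sum_n \sum_{j} r^j \cL^j(f)(\tilde x,\tilde g)\mathbf{1}_{\{j=n+k(n)\}}$ and bounds the inner indicator sum over $n$ by a constant depending only on $N_h$. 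Renaming $K_{\tilde g^{-1}g}$ (absorbing the $N_h$-factor and the contribution of the initial terms) gives the upper bound $\G_r(f)(x,g) \leq K_{h^{-1}g}(L+1)\,\G_r(f)(y,h)$ with $h$ now playing the role of $\tilde g$; the lower bound is the same inequality with the roles of $(x,g)$ and $(y,h)$ exchanged.

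The step I expect to require the most care is the bookkeeping in the shift $n \mapsto n + k(n)$: because $k(n)$ varies with $n$, one cannot simply reindex the series, and one must argue that the resulting multiplicity in hitting a given index $j$ is bounded by a constant (which it is, since $k(n) \leq N_h$ forces $n \in \{j - N_h, \dots, j\}$, so at most $N_h+1$ values of $n$ contribute to index $j$). This inflates the constant by a factor of at most $N_h+1$, harmless for the statement. A secondary point is the treatment of $r = R$, where one needs $\G_R(f)$ to be finite --- but this is exactly the hypothesis $\G_r(f)(x,\id) < \infty$ combined with $\rho$-transience and the propagation of finiteness in Proposition~\ref{prop:finiteness of G_R}, so no new difficulty arises. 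Everything else is the routine estimate already carried out in Lemma~\ref{lemma:distortion of L}.
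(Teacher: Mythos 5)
Your argument is essentially the paper's proof: apply Lemma \ref{lemma:distortion of L} termwise, reindex the resulting series, use $r \geq 1$ to absorb the index shift, and obtain the reverse bound by exchanging the roles of $(x,g)$ and $(y,h)$. The only real difference is that Lemma \ref{lemma:distortion of L} already supplies a \emph{single} $k \leq N_h$ valid for all $n \geq m$ (the quantifier there is ``there exists $k$ such that for all $n \geq m$''), so your multiplicity bookkeeping for $n \mapsto n+k(n)$ and the resulting factor $N_h+1$ (and the ensuing renaming of $K_{h^{-1}g}$) are unnecessary --- the paper simply reindexes with this one $k$ and uses $r^{-k}\leq 1$; your explicit attention to the finitely many terms with $n<m$ is, if anything, more careful than the paper's own proof, which applies the termwise estimate to all $n$ without comment.
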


\begin{proof} It follows from Lemma \ref{lemma:distortion of L} that, from some $k \in \N$,
\begin{align*}
\G_r(f)(x,g)  & = \sum_{n=0}^\infty  r^n   \cL^{n}(f) (x,g) \leq  K_{h^{-1}g}(L+1) \sum_{n=0}^\infty  r^n   \cL^{n+k}(f) (y,h) \\
& = \frac{K_{h^{-1}g}(L+1) }{r^k} \sum_{n=k}^\infty  r^n   \cL^{n}(f) (y,h) \leq  {K_{h^{-1}g}(L+1) } \G_r(f)(y,h) .
\end{align*}
The second part follows from interchanging the roles of $(x,g)$ and $(y,h)$.
\end{proof}

\section{Ancona-Gouëzel inequalities for extensions by word hyperbolic groups}\label{sec:AG-inequalities}

Hyperbolic groups were introduced by Gromov (\cite{Gromov:1987}) in order to unify the theory of groups with a certain notion of negative curvature. In here, we exclusively consider the word metric on $G$ which we recall now. 
For a fixed, finite set $\mathfrak{g}$ of generators for $G$, the word metric is defined by
\[d(g,h) = \min\{k : ga_1 \ldots a_k=h  \hbox{ or }  ha_1 \ldots a_k=g \, a_i \in \mathfrak{g} \}.\]
In general, a metric space $(G,d)$ is a referred to as {Gromov hyperbolic} or $\delta$-hyperbolic in the sense of Gromov if $(G,d)$ is a geodesic space and there exists $\delta > 0$
 such that
\[ (x \cdot z)_\mathbf{o} \geq \min\left\{(x \cdot y)_\mathbf{o},(y \cdot z)_\mathbf{o} \right\} -\delta,  \]
for all $x,y,z,\mathbf{o} \in G$. In here,
 $(x \cdot y) = (x \cdot y)_\mathbf{o} $ refers to the Gromov product defined by
\begin{equation}\label{eq:gromov product} (x \cdot y)_\mathbf{o} := \frac{1}{2} ( d(x,\mathbf{o} ) + d(y,\mathbf{o} ) - d(x,y)  ). \end{equation}
In this situation, as $d$ is the word metric, one refers to $G$ as word hyperbolic. 
Important features of Gromov hyperbolic spaces are that triangles are $4\delta$-thin and that the Cayley graph of $G$ can be uniformly approximated by trees in the following sense (see Theorem 12 in \cite{GhysHarpe:1990}).
\begin{lemma}\label{lem:tree_approximation} Let $(M,d)$ be $\delta$-hyperbolic and $F \subset M$ with $|F| \leq 2^k +2$ and $\mathbf{o} \in M$. Then there exists a finite, rooted tree $T$ and $\Theta: F \to T$ such that
\begin{enumerate}
\item $d(x, \mathbf{o}) = d(\Theta(x), \Theta(\mathbf{o}))$ for all $x \in F$,
\item $d(x, y) - 2k \delta \leq d(\Theta(x), \Theta(y)) \leq d(x, y)$ for all $x,y \in F$.
\end{enumerate}
\end{lemma}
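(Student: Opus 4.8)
The plan is to prove this by induction on $k$, following the classical tree-approximation argument (Gromov, Ghys–de la Harpe). The base case $|F| \le 4$ (i.e. $k=1$) already contains the geometric heart of the matter: four points in a $\delta$-hyperbolic space can be realized, up to an additive error controlled by $\delta$, inside a tree. Concretely, for $F = \{\mathbf{o}, x, y, z\}$ one builds the ``tripod-with-a-tail'' tree whose internal combinatorics are dictated by which of the three Gromov products $(x\cdot y)_{\mathbf o}$, $(x\cdot z)_{\mathbf o}$, $(y\cdot z)_{\mathbf o}$ is the smallest; the $\delta$-inequality for the Gromov product is exactly what guarantees the other two are within $\delta$ of each other, and that is what forces the four-point configuration to be tree-like with error $\le 2\delta$. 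I would set up the tree $T$ so that (i) is satisfied on the nose by placing each $\Theta(x)$ at tree-distance $d(x,\mathbf o)$ from the root $\Theta(\mathbf o)$, and then check (ii) by a short case analysis on the Gromov products.

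For the inductive step, assume the statement holds whenever $|F| \le 2^{k-1}+2$ and take $F$ with $|F| \le 2^{k}+2$. First I would handle the point $\mathbf o$ and split $F \setminus \{\mathbf o\}$ into two halves $F_1, F_2$, each of size at most $2^{k-1}$, add $\mathbf o$ (and possibly one common ``anchor'' point) to each, apply the induction hypothesis to $F_i \cup \{\mathbf o, \ast\}$ to get trees $T_1, T_2$ with error $2(k-1)\delta$, and then glue $T_1$ and $T_2$ along the image of the common points. The gluing introduces at most one further ``error of type base case'', i.e. an additional $2\delta$, giving the claimed total $2k\delta$. Throughout, the lower bound $d(x,y) - 2k\delta$ accumulates the $\delta$-slack from each halving, while the upper bound $d(\Theta(x),\Theta(y)) \le d(x,y)$ is automatic because in each construction tree-distances are defined as $d(x,\mathbf o) + d(y,\mathbf o) - 2\,(\text{tree-Gromov product})$ and the tree-Gromov product is built to be $\ge$ the space Gromov product (a consequence of the $\delta$-inequality being an ``almost-ultrametric'' statement on Gromov products).

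I expect the main obstacle to be bookkeeping rather than a deep idea: making the gluing precise so that property (i) is preserved exactly (the root-distances must agree across the two subtrees, which is why $\mathbf o$ is included in both halves) and controlling how the additive errors compound so that the constant comes out as $2k\delta$ and not something larger like $2^k\delta$. The key point that keeps the error linear in $k$ is that one should not iterate the four-point estimate $2^k$ times but rather use a balanced binary splitting, so that only $k$ levels of gluing occur. Since this is exactly Theorem~12 in \cite{GhysHarpe:1990}, in the write-up I would either cite it directly or reproduce the balanced-splitting induction sketched above; for the needs of this paper the statement is used as a black box, so a citation with a one-line indication of the inductive scheme suffices.
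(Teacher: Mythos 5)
The paper does not prove this lemma at all: it is quoted as Theorem 12 of \cite{GhysHarpe:1990} and used as a black box, so your closing suggestion to cite Ghys--de la Harpe with a one-line indication of the inductive scheme is exactly what the paper does.

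If, however, you intend your sketched induction to stand as a proof, there is a genuine gap in the inductive step, and it is more than bookkeeping. Splitting $F\setminus\{\mathbf{o}\}$ into two \emph{arbitrary} halves, applying the hypothesis to each half together with $\mathbf{o}$ (and an anchor), and gluing the two trees along the images of the common points does not control cross distances: for $x\in F_1$, $y\in F_2$ the glued tree generically gives $d(\Theta(x),\Theta(y))=d(x,\mathbf{o})+d(y,\mathbf{o})=d(x,y)+2(x\cdot y)_{\mathbf{o}}$, which violates the upper bound $d(\Theta(x),\Theta(y))\leq d(x,y)$ whenever $(x\cdot y)_{\mathbf{o}}>0$; gluing along a single extra anchor does not repair this for all cross pairs. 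The linear-in-$k$ error in the classical argument does not come from an arbitrary balanced partition but from the chain inequality for Gromov products, $(x_1\cdot x_n)_{\mathbf{o}}\geq \min_i (x_i\cdot x_{i+1})_{\mathbf{o}}-k\delta$ for chains of length at most $2^k$, proved by splitting the \emph{chain} at its midpoint and applying the $\delta$-inequality once per level; one then builds the tree in one step from the modified (chain-maximized) Gromov products, or equivalently follows the ordered construction along the union of geodesics $[\mathbf{o},x_i]$ in \cite{GhysHarpe:1990}. Your base case ($|F|\leq 4$, error $2\delta$, and exact tripod realization for three points) is fine; it is the reduction from $2^k+2$ points to two subfamilies that has to be organized through ordered chains rather than an arbitrary halving, precisely so that property (i) is preserved and the errors add up to $2k\delta$ rather than failing the upper bound outright.
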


\begin{remark}\label{rem:hyperbolic-amenable}
A further important property of hyperbolic groups is related to non-amenability (see Rem. \ref{rem:amenable}). First recall that a group $G$ is elementary if $G$ has a cyclic subgroup of finite index. Then, a world hyperbolic group is either elementary (and therefore amenable) or non-elementary and non-amenable. This result is well known and can be deduced e.g. by combining Theorem A in \cite{CapraceCornulierMonod:2015} with the observation that the distance between two elements in $G$ is uniformly bounded from below. Hence, Remark \ref{rem:amenable} implies that for any non-elementary, word hyperbolic group $G$, the group extension $T$ is transient and $R> 1$.   
\end{remark}

For extensions by hyperbolic groups, we now prove a strong version of Ancona's inequality as in \cite{GouezelLalley:2013} and \cite{Gouezel:2014} in the setting of random walks. Therefore, we first consider the operator $\mathbb{H}_r$ defined by
\[\mathbb{H}_r(f_1, f_2) := \G_r(f_1\cdot \G_r(f_2)).\]
\begin{lemma} \label{lem:operator H_r} For $r \in (0,R)$,  $\mathbb{H}_r: \cH \times \cHL \to \cHL$.  If $T$ is transient, then $\mathbb{H}_R(\X_A f_1, \X_B f_2) \in \cHL$ for all $f_i \in \cH$ and $A,B \subset G$ finite.
\end{lemma}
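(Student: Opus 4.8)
The plan is to reduce the statement about $\mathbb{H}_r$ to the action of $\G_r$ on the spaces $\cH_\alpha$ and $\cHL$ already established in Proposition \ref{prop:action of G_r} (and Proposition \ref{prop:finiteness of G_R} for the case $r=R$). The key observation is that $\mathbb{H}_r(f_1,f_2) = \G_r\bigl(f_1 \cdot \G_r(f_2)\bigr)$, so everything hinges on showing that the inner product $f_1 \cdot \G_r(f_2)$ lands in a space on which the outer copy of $\G_r$ is known to act. First I would take $r \in (0,R)$, $f_1 \in \cH_\alpha$ and $f_2 \in \cHL$. By Proposition \ref{prop:action of G_r}, there is $h \in E_\rho$ with $|f_2| \leq h$ and $D_\alpha(f_2) \leq h$, and then $|\G_r(f_2)| \leq \frac{R}{R-r} h$ together with $D_\alpha(\G_r(f_2)) \leq \frac{CR}{R-r} h$. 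So $\G_r(f_2)$ is itself (a bounded multiple of) an element of $\cHL$ dominated by $h$.

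The next step is to verify that the product of an element of $\cH_\alpha$ with an element of $\cHL$ is again in $\cHL$. Write $g := f_1 \cdot \G_r(f_2)$. Then $|g| \leq \|f_1\|_\infty \cdot \frac{R}{R-r} h$, which is again dominated by an element of $E_\rho$ since $E_\rho$ is closed under multiplication by positive constants. For the local Hölder coefficient, the usual product estimate
\[
D_\alpha(f_1 \cdot \G_r(f_2)) \leq \|f_1\|_\infty D_\alpha(\G_r(f_2)) + \|D_\alpha(f_1)\|_\infty \cdot \sup |\G_r(f_2)|
\]
(where the suprema are taken cylinderwise, so that $D_\alpha$ behaves submultiplicatively on each cylinder of length one) shows that $D_\alpha(g)$ is bounded by a constant times $h$. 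Hence $g \in \cHL$, and applying Proposition \ref{prop:action of G_r} once more gives $\G_r(g) = \mathbb{H}_r(f_1,f_2) \in \cHL$, as claimed.

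For the transient case $r = R$ with $f_i \in \cH_\alpha$ and $A,B \subset G$ finite, I would argue the same way but invoke Proposition \ref{prop:finiteness of G_R} at both stages. That proposition gives $\G_R(\X_B f_2) \in \cHL$ together with the explicit bounds $|\G_R(\X_B f_2)| \leq \|\X_B f_2\|_\infty \G_R(\X_B)$ and a corresponding estimate on $D_\alpha(\G_R(\X_B f_2))$ in terms of $\G_R(\X_B)$ and $\G_{2^{-\alpha}R}(\X_B)$, all of which lie in $E_\rho$. Multiplying by $\X_A f_1$ restricts the support and multiplies by a bounded Hölder factor, so $\X_A f_1 \cdot \G_R(\X_B f_2) = \X_A \cdot (f_1 \cdot \G_R(\X_B f_2))$ is of the form $\X_A \cdot (\text{something in } \cHL)$, hence again covered by Proposition \ref{prop:finiteness of G_R}; applying $\G_R$ to it yields an element of $\cHL$.

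The only real subtlety — and the step I would be most careful about — is that $D_\alpha$ is defined cylinder-by-cylinder on cylinders of length one, so one must check the product rule and the domination $|g| \le (\text{const})\,h$ hold with the cylinderwise suprema, and that the constants absorbed do not depend on the cylinder or on $g \in G$; this is exactly the point where transitivity and the finite-type hypothesis (via the uniform constants $C_\varphi$, $K_h$, $N_h$ from Lemma \ref{lemma:distortion of L}) enter. Everything else is a routine bookkeeping of the bounds already proved, so I would keep the write-up short and simply cite Propositions \ref{prop:action of G_r} and \ref{prop:finiteness of G_R}.
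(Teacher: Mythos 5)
Your argument is correct and takes essentially the same route as the paper: for $r<R$ the paper likewise reduces the claim to the observation that $f\cdot g\in\cHL$ for $f\in\cH$ and $g\in\cHL$ and then applies Proposition \ref{prop:action of G_r} twice, and for $r=R$ it applies Proposition \ref{prop:finiteness of G_R} twice after noting that $\X_A f_1\,\G_R(\X_B f_2)$ is Hölder and supported on finitely many group elements. The cylinderwise product estimate you single out is exactly the step the paper leaves implicit, so your write-up matches (and slightly elaborates) the published proof.
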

\begin{proof} As $\G_r$ acts on $\cHL$ by  Proposition \ref{prop:action of G_r}, it remains to observe that $fg \in  \cHL$ for $f \in \cH $ and $g \in  \cHL$ in order to obtain that  $\mathbb{H}_r$ is well defined. For $r=R$, Proposition \ref{prop:finiteness of G_R} implies that $ \G_R(\X_B f_2) \in \cHL$. Hence, as $\X_A f_1 \G_R(\X_B f_2) \in \cH$ by finiteness of $A$, another application of  Proposition \ref{prop:finiteness of G_R} shows that $\mathbb{H}_R(\X_A f_1, \X_B f_2) \in \cHL$.
\end{proof}

\begin{lemma} Assume that $G$ is  a hyperbolic group. If $T$ is transient, then
\[\sup \left\{\sum_{|g|=k} \mathbb{H}_R(\X_g,\X_{\id})(x,\id) \Big| k \in \N, x\in \Sigma \right\} < \infty.\]
\end{lemma}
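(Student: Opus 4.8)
The claim is a uniform bound $\sup_{k,x} \sum_{|g|=k} \mathbb{H}_R(\X_g,\X_{\id})(x,\id) < \infty$. The starting point is to rewrite the inner sum: since $\sum_{|g|=k}\X_g = \X_{S_k}$ where $S_k := \{g \in G : |g|=k\}$ is the sphere of radius $k$, linearity of $\G_R$ gives
\[ \sum_{|g|=k}\mathbb{H}_R(\X_g,\X_{\id})(x,\id) = \G_R\big(\X_{S_k}\,\G_R(\X_{\id})\big)(x,\id). \]
So I want to bound $\G_R(\X_{S_k}\,\G_R(\X_{\id}))(x,\id)$ uniformly in $k$ and $x$. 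By Lemma \ref{lem:bounded distortion for the extended operator} (applied with $f = \X_{S_k}\,\G_R(\X_{\id})$, which on each fixed fibre $\Sigma\times\{g\}$ has controlled multiplicative oscillation since $\G_R(\X_{\id})(\cdot,g)$ does, with a constant depending only on... here the subtlety is that the distortion constant $K_h$ depends on $h=g^{-1}\tilde g$, but we only compare points in the \emph{same} fibre, i.e.\ $h=\id$, so the constant is the uniform $K_{\id}$), the value $\G_R(\X_{S_k}\,\G_R(\X_{\id}))(x,\id)$ is comparable, up to a constant independent of $k$ and $x$, to its integral $\int \G_R(\X_{S_k}\,\G_R(\X_{\id}))(\cdot,\id)\,d\mu$, or to its value at any other fixed point. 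This reduces the problem to estimating a single number per $k$.

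Next I would pass to the ``dual'' side. The total mass $\int_\Sigma \G_R(\X_{S_k}\,\G_R(\X_{\id}))(x,\id)\,d\mu(x)$ can be expanded, using $\G_R = \sum_n R^n\cL^n$ and the identity $\int \cL^n(F)\,d\mu = \int F \circ$ (shift-related change of variables) — concretely $\int_\Sigma \cL^n(F)(x,\id)\,d\mu(x) = \sum_{|w|=n}\int_{\theta^n[w]} \varphi_w(x) F(\tau_w(x,\id))\,d\mu(x) = \int_{\Sigma} F(y,\kappa^n(y)^{-1}\cdot)\,$-type expression; more cleanly, $\int \cL^n(\X_g\, \cdot)(x,\id)d\mu$ counts, with $\rho^n$-weights, the ``paths'' from $\id$ of length $n$ landing in the fibre over $g$. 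The upshot is a formula of the form
\[ \int \mathbb{H}_R(\X_g,\X_{\id})(x,\id)\,d\mu(x) \;\asymp\; \G_R(\X_g)(\cdot,\id)\cdot \G_R(\X_{\id})(\cdot, g)^{\,?} \]
— i.e.\ a product of Green quantities, via a first-passage / last-exit decomposition at the fibre over $g$: any path contributing to $\mathbb{H}_R(\X_g,\X_{\id})$ must at some point sit over $g$ and then continue; factoring at the first visit to the fibre over $g$ gives (Green from $\id$ to $g$)$\times$(Green from $g$ to $\id$) up to bounded multiplicative error, by Lemma \ref{lemma:distortion of L}. So
\[ \sum_{|g|=k}\int \mathbb{H}_R(\X_g,\X_{\id})(x,\id)\,d\mu(x) \;\lesssim\; \sum_{|g|=k} \G_R(\X_g)(\cdot,\id)\,\G_R(\X_{\id})(\cdot,g). \]

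Finally I would close the estimate by summing over all of $G$, not just the sphere: $\sum_{k}\sum_{|g|=k}\G_R(\X_g)(\cdot,\id)\G_R(\X_{\id})(\cdot,g) = \sum_{g\in G}\G_R(\X_g)(\cdot,\id)\G_R(\X_{\id})(\cdot,g)$, which — again via a decomposition of $\G_R(\X_{\id})(x,\id)$ at the \emph{last} exit from the fibre over $g$, i.e.\ $\G_R(\X_{\id})(x,\id) \asymp \sum_g \G_R(\X_g)(x,\id)\cdot(\text{return kernel } g\to\id)$ with the return kernels summing appropriately — telescopes into a constant multiple of $\G_R(\X_{\id})(\cdot,\id)$, which is finite by $\rho$-transience. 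This gives a bound uniform in $k$ (indeed the sum over all $k$ is finite), and combined with the first paragraph's reduction (fibre-distortion comparison between $\G_R(\cdots)(x,\id)$ and its integral) yields the uniform bound in both $k$ and $x$.

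\textbf{Main obstacle.} The delicate point is the first/last-passage decomposition at the fibre over $g$: because $T$ is a \emph{skew product over a Markov map} rather than a genuine random walk, ``being over $g$ at time $n$'' does not make the future independent of the past — the $\Sigma$-coordinate still carries memory. The Gibbs--Markov distortion estimates (Lemma \ref{lemma:distortion of L}, Lemma \ref{lem:bounded distortion for the extended operator}) are exactly what is needed to absorb this dependence into a uniform multiplicative constant (after a bounded time shift $k\le N_h$; here one must check that summing these $N_h$-shifts over $g$ does not blow up, which it doesn't because the constant $K_{\id}$ for same-fibre comparisons is a single uniform constant). Getting the decomposition to be genuinely an inequality with one uniform constant, rather than a constant depending on $g$, is the heart of the argument; everything else is bookkeeping with geometric series in $\rho r \le 1$.
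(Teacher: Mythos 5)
Your preparatory steps are sound and in fact correspond to the adaptation the paper itself has in mind: writing the sphere sum as $\G_R\bigl(\X_{S_k}\,\G_R(\X_{\id})\bigr)(x,\id)$ and, using the same-fibre case of Lemma \ref{lem:bounded distortion for the extended operator} (constant $K_{\id}$), the uniform factorization $\mathbb{H}_R(\X_g,\X_{\id})(x,\id)\asymp \G_R(\X_g)(x,\id)\,\G_R(\X_{\id})(y,g)$. But this only \emph{reduces} the lemma to the random-walk-type statement $\sup_k\sum_{|g|=k}\G_R(\X_g)(x,\id)\,\G_R(\X_{\id})(y,g)<\infty$, which is the actual content of the lemma, and your proof of that statement fails at the last step. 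Summing over all of $G$ rather than over one sphere destroys the estimate: since $\sum_{g\in G}\X_g=\mathbf{1}$,
\[
\sum_{g\in G}\mathbb{H}_R(\X_g,\X_{\id})(x,\id)
=\G_R\bigl(\G_R(\X_{\id})\bigr)(x,\id)
=\sum_{N\geq 0}(N+1)\,R^{N}\,\cL^{N}(\X_{\id})(x,\id),
\]
i.e.\ every excursion from the fibre over $\id$ back to itself is counted once for each intermediate time; this is the derivative $\frac{d}{dr}\bigl(r\,\G_r(\X_{\id})(x,\id)\bigr)$ at $r=R$. Transience gives convergence of $\sum_N R^N\cL^N(\X_{\id})(x,\id)$ only, and says nothing about the differentiated series, which in the situations this lemma is designed for is infinite (in the random-walk analogue the local limit behaviour $p^N(\id,\id)\sim CR^{-N}N^{-3/2}$ makes $\sum_N (N+1)R^Np^N(\id,\id)$ diverge). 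Accordingly, your claimed ``telescoping'' is false: a last-exit decomposition at a \emph{fixed} fibre does bound $\G_R(\X_g)(x,\id)$ times a return kernel by $\G_R(\X_{\id})(x,\id)$ for that single $g$, but summing these bounds over $g$ counts each orbit once per fibre it visits, which is exactly the divergent quantity displayed above. In particular your parenthetical claim that ``the sum over all $k$ is finite'' is not a consequence of transience and is generally wrong.

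A telling symptom is that your argument never uses the hyperbolicity of $G$, although it is a hypothesis and is essential: the point of the lemma is precisely that restricting to a single sphere, together with the hyperbolic geometry, keeps the sum bounded even though the sum over $G$ diverges. The paper's proof is exactly this: it runs the argument of Lemma 2.5 of \cite{Gouezel:2014} verbatim, the only modifications being that $\mathbb{H}_R$ is now an operator (Lemma \ref{lem:operator H_r}) and that one application of Lemma \ref{lem:bounded distortion for the extended operator} is needed --- which is the role played by your factorization step. So what you have done correctly is the easy bookkeeping part of the adaptation; the geometric core of the bound on the sphere sums is missing and cannot be replaced by the transience-only argument you propose.
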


\begin{proof} The proof reads in verbatim as the one of Lemma 2.5 in \cite{Gouezel:2014}.
The only differences are that $\mathbb{H}_r$ now is an operator, well defined by Lemma \ref{lem:operator H_r}
and that one has to apply once Lemma \ref{lem:bounded distortion for the extended operator} in the estimates up to a constant.\end{proof}

We now adapt the principal estimate for obtaining the strong version of Ancona's inequalities in \cite{Gouezel:2014} to our operator setting. In order to do so, for $A \subset G$, set
\[ \G_r(f|A) = \sum_{n=0}^\infty r^n \cL^n\left(f \ \cdot \ \textstyle \prod_{k=1}^{n-1} {\X_A} \circ T^k\right) .
 \]
That is, $\G_r(f,A)$ corresponds to the sum of those paths, which stay in $\Sigma \times A$. In  analogy to \cite{Gouezel:2014}, the following estimate holds.

\begin{lemma} \label{lem:superexponential decay}
Assume that $G$ is hyperbolic, $T$ is a topologically transitive, transient extension of a Gibbs-Markov map of finite type and that $\G_R(\X_g)(x,\id) \asymp \G_R(\X_{\id})(x,g)$ independent of  $(x,g)\in \Sigma \times G$.
 Then there exists $n_0 \in \N$ and $\lambda >1$ such that for any $n > n_0$,  $g,\id,h \in G$ on a geodesic segment (in this order) with $d(g,\id)>n$, $d(\id,h) > n$, we have that
\[ \G_R(\X_h|B(\id,n)^c) (x,g) \leq 2^{-\lambda^n} \hbox{ for all }x \in \Sigma. \]
\end{lemma}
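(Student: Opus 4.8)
The plan is to mimic the argument of Lemma 2.6 in \cite{Gouezel:2014}, replacing the random-walk Green function by the operator $\G_R$ and keeping track of the fact that estimates involving $\G_R(\X_\bullet)(\,\cdot\,,\bullet)$ are now only valid up to multiplicative constants (via Lemma \ref{lem:bounded distortion for the extended operator} and the hypothesis $\G_R(\X_g)(x,\id)\asymp\G_R(\X_{\id})(x,g)$). First I would set up the recursive/decoupling scheme: for a path from $\Sigma\times\{g\}$ to $\Sigma\times\{h\}$ that avoids $\Sigma\times B(\id,n)^c$ — wait, that is backwards — that \emph{stays outside} $\Sigma\times B(\id,n)$, observe that since $g,\id,h$ lie on a geodesic in this order with $d(g,\id),d(\id,h)>n$, any such path must, by thinness of geodesic triangles, pass through points whose distance to $\id$ is controlled, and more importantly it must traverse a long ``corridor'' of group elements far from $\id$. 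The key geometric input is that the sphere $\partial B(\id,n)$ that the path is forced to cross has a structure allowing a self-improving estimate: if $M_n$ denotes the supremum in the statement, then concatenating $\asymp\lambda^{-1}$-many independent crossings of shells of radius $\sim n/\lambda^{k}$ gives $M_n\lesssim (C M_{n/\lambda})^{\lambda}$ for a suitable $\lambda>1$, which iterates to the doubly-exponential bound $2^{-\lambda^n}$ once $M_{n_0}$ is small enough.

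The mechanism for getting $M_{n_0}$ small is the previous lemma: $\sup_{k,x}\sum_{|g|=k}\mathbb{H}_R(\X_g,\X_{\id})(x,\id)<\infty$ forces, for $k$ large, that $\mathbb{H}_R(\X_g,\X_{\id})(x,\id)$ — and hence (up to the $\asymp$ hypothesis and Lemma \ref{lem:bounded distortion for the extended operator}) the contribution of paths from $g$ to $\id$ passing far out — is summably small, so individual restricted Green quantities $\G_R(\X_h\mid B(\id,n)^c)(x,g)$ are bounded by something like $C\lambda_0^n$ with $\lambda_0<1$ for $n\ge n_0$; that provides the base case $M_{n_0}<1$. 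Then I would decompose a path counted by $\G_R(\X_h\mid B(\id,n)^c)(x,g)$ according to where it last visits the shell around radius $n_1:=\lfloor n/\lambda\rfloor$ on the ``$g$-side'' and first visits the corresponding shell on the ``$h$-side''; the middle segment stays in $B(\id,n_1)^c$ relative to some intermediate basepoint, and after re-centering (using transitivity and left-invariance of $\G_R(\X_{\id})$, i.e. Lemma \ref{lem:bounded distortion for the extended operator}) it is itself bounded by $M_{n_1}$ up to a constant $K$ depending only on a bounded set of group elements. Summing over the finitely-many relevant shell points — finite because $\theta$ is of finite type and the group is hyperbolic, so $\partial B(\id,n_1)$ meets the geodesic corridor in boundedly many relevant ``fellow-travelling'' cosets — yields $M_n\le (K\,M_{n_1})^{2}$ or a similar power, and the constant $K$ is absorbed once $M_{n_1}<1/K$.

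The main obstacle, as in \cite{Gouezel:2014}, will be the bookkeeping that turns the geometric ``corridor'' picture into a genuine super-multiplicative recursion with the \emph{right} base radius $n_0$ and constant $\lambda>1$: one must check that the number of shell-crossing points that actually carry mass grows sub-exponentially in $n$ (so that the sum over them does not destroy the gain), and that the re-centering via Lemma \ref{lem:bounded distortion for the extended operator} introduces only the group-element-dependent constants $K_h$ for $h$ ranging over a \emph{fixed finite} set — this is where finiteness of type of $\theta$ and hyperbolicity of $G$ are both essential. A secondary technical point is that, unlike the random-walk case, $\G_R$ acts on the locally Hölder space $\cHL$ and all comparisons of $\G_R(\X_\bullet)(x,\bullet)$ are only valid up to constants; one must verify that iterating the recursion $n\mapsto \lfloor n/\lambda\rfloor$ roughly $\log_\lambda n$ times produces at most a constant raised to the power $\prod \lambda^{-1}$-type sum, which stays bounded, so that the final estimate is genuinely $2^{-\lambda^n}$ and not merely $2^{-c\lambda^n}$ — or, if only the weaker form is obtained, noting that it suffices after replacing $\lambda$ by a slightly smaller value $>1$.
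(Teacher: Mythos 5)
There is a genuine gap, and it sits exactly where the paper leans on Gou\"ezel's barrier construction. Your recursion step requires summing over the points where the path crosses the separating ``shell'', and you dispose of this sum by claiming that only boundedly many crossing points are relevant. That is false: a path counted by $\G_R(\X_h|B(\id,n)^c)(x,g)$ may cross any separating set arbitrarily far from the geodesic, and spheres in a non-elementary hyperbolic group grow exponentially, so the crossing set is infinite and carries no a priori bound. Controlling this sum is the whole analytic content of Lemma 2.6 in \cite{Gouezel:2014}, and it is what the paper's proof imports: one places exponentially many (in $n$) barriers $A_i$ between $g$ and $h$ outside $B(\id,n)$, encodes the crossings by the operators $L_i:\ell^2(A_{i+1})\to\ell^2(A_i)$ with kernel $\int \X_a\G_R(\X_b)\,d\mu$, and uses the preceding lemma ($\sup_{k,x}\sum_{|g|=k}\mathbb{H}_R(\X_g,\X_{\id})(x,\id)<\infty$) together with the symmetry hypothesis $\G_R(\X_g)(x,\id)\asymp\G_R(\X_{\id})(x,g)$ and Lemma \ref{lem:bounded distortion for the extended operator} to make $\|L_i\|_{\ell^2}<1/2$; the crossing sums are thereby absorbed into operator norms rather than estimated by a finite count. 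Relatedly, your base case is unjustified: the preceding lemma gives uniform summability of $\mathbb{H}_R$ over spheres, not the claimed decay $\G_R(\X_h|B(\id,n)^c)(x,g)\leq C\lambda_0^n$; in Gou\"ezel's scheme the required smallness enters only through the operator norms of the barrier operators, which one can make small by choosing the barriers suitably separated.

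A second, structural problem is that your recursion cannot produce the stated bound even if the crossing sums were controlled. Iterating $M_n\lesssim (CM_{n/\lambda})^{\lambda}$ (or $M_n\leq (KM_{\lfloor n/\lambda\rfloor})^2$) down to a fixed $n_0$ involves only $\log_\lambda(n/n_0)$ squarings, hence yields at best a bound of the form $\exp(-c\,n^{p})$ for some finite $p$ --- simple or stretched exponential decay, not $2^{-\lambda^n}$. The doubly exponential bound requires exponentially many independent small factors, i.e.\ an additive-step recursion of the type $M_{n+C}\leq K M_n^2$, and the geometric source of this is the exponential divergence of geodesics: to join $g$ and $h$ outside $B(\id,n)$ a path must cross a number of disjoint barriers that grows exponentially in $n$. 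This is precisely what the barrier construction of \cite{Gouezel:2014} delivers and what your multiplicative rescaling of the radius loses; decreasing $\lambda$ slightly does not repair it.
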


\begin{proof} As the proof reads in almost all parts in verbatim as the one of Lemma 2.6 in \cite{Gouezel:2014}, we again only indicate the necessary adaptions. The proof is based on a sequence of barriers $A_i$ such that the operator norm of $L_i: \ell^2(A_{i+1}) \to \ell^2(A_{i}) $ is smaller than $1/2$. These operators in the context considered in here have to be defined by, for $a \in A_i$ and $f \in \ell^2(A_{i+1})$,
\[ L_i(f)(a) = \sum_{b \in A_{i+1}} \left( \int \X_a \G_R(\X_b) d\mu \right) f(b).\]
Furthermore, it follows from $\G_R(\X_g)(x,\id)  \asymp \G_R(\X_{\id})(g,x)$ and Lemma \ref{lem:bounded distortion for the extended operator} that
\[\sum_{|g|=k}(\G_R(\X_g)(x,\id))^2 = C^{\pm 1} \sum_{|g|=k} \mathbb{H}_R(\X_g,\X_{\id})(x,\id).\]
As the method of proof in \cite{Gouezel:2014} allows to construct barriers the $A_i$ such that $\|L_i\|_{\ell^2}$ is arbitrary small, it is possible to absorb the constants $C$ and the one arising from a further application of Lemma \ref{lem:bounded distortion for the extended operator}.
\end{proof}

We are now in position to prove the main result of this section which generalizes the results for random walks with independent increments for cocompact Fuchsian groups in \cite[Th. 4.6]{GouezelLalley:2013} and hyperbolic groups in
\cite[Th. 2.9]{Gouezel:2014} to group extensions. The proof is an adaption of the arguments in \cite{GouezelLalley:2013}
to the setting of Green operators. In particular, as the proof of the exponential decay in the second part relies on a potential theoretic argument, it turns out to be necessary to replace the
concept of minimal harmonic functions by its dual, that is by minimal conformal measures as defined in Section \ref{sec:appendix}.

\begin{theorem}
 \label{theo:Ancona-Gouezel inequalities} Assume that $G$ is hyperbolic, $T$ is a topologically transitive, transient extension of a Gibbs-Markov map of finite type such that $\G_R(\X_g) (x,\id) \asymp \G_R(\X_{\id})(x,g)$, independent of  $(x,g)\in \Sigma \times G$.
\begin{enumerate}
\item \label{theo:Ancona-Gouezel inequalities - pt 1}
\textbf{Uniform Ancona inequality.} For any $D>0$, there exists $C>0$ such that for any $g,h\in G$, $x \in \Sigma$ and any $z \in G $ such that the distance between $z$ and a path from $g$ to $h$ of length $d(g,h)$ is smaller than $D$,
and any $r \in [1,R]$,
\begin{equation}\label{eq: strong ancona inequality} \G_r(\X_{h})(x,g) \leq C \G_r(\X_z \G_r(\X_{h}))(x,g).\end{equation}
\item \textbf{Gouëzel-Lalley inequality}. There exist $C>0$ and $\lambda \in (0,1)$ such that for any $r \in [1,R]$, for any $x,y \in \Sigma$ and for any $g,g',h,h' \in G$ in a configuration approximated by a tree as shown below, then 
\[ \left| \frac{\G_r(\X_h)(x,g)/\G_r(\X_h)(y,g')}{\G_r(\X_{h'})(x,g)/\G_r(\X_{h'})(y,g')}  -1 \right| \leq C\lambda^n  .\]
\begin{figure}[ht] 
   \centering
\def\svgwidth{0.6\textwidth}
\begingroup%
  \makeatletter%
  \providecommand\color[2][]{%
    \errmessage{(Inkscape) Color is used for the text in Inkscape, but the package 'color.sty' is not loaded}%
    \renewcommand\color[2][]{}%
  }%
  \providecommand\transparent[1]{%
    \errmessage{(Inkscape) Transparency is used (non-zero) for the text in Inkscape, but the package 'transparent.sty' is not loaded}%
    \renewcommand\transparent[1]{}%
  }%
  \providecommand\rotatebox[2]{#2}%
  \newcommand*\fsize{\dimexpr\f@size pt\relax}%
  \newcommand*\lineheight[1]{\fontsize{\fsize}{#1\fsize}\selectfont}%
  \ifx\svgwidth\undefined%
    \setlength{\unitlength}{584.75101776bp}%
    \ifx\svgscale\undefined%
      \relax%
    \else%
      \setlength{\unitlength}{\unitlength * \real{\svgscale}}%
    \fi%
  \else%
    \setlength{\unitlength}{\svgwidth}%
  \fi%
  \global\let\svgwidth\undefined%
  \global\let\svgscale\undefined%
  \makeatother%
  \begin{picture}(1,0.22414705)%
    \lineheight{1}%
    \setlength\tabcolsep{0pt}%
    \put(0,0){\includegraphics[width=\unitlength,page=1]{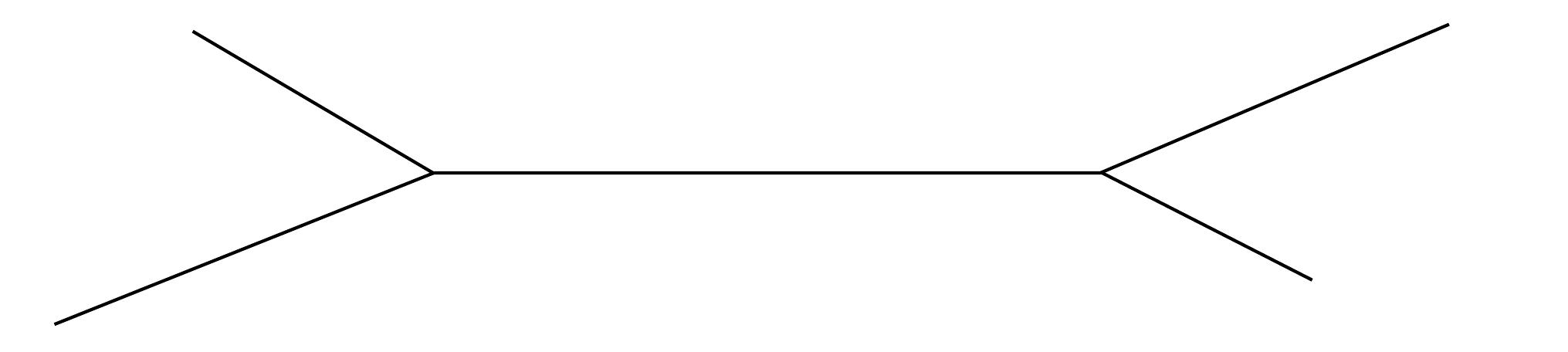}}%
    \put(0.84165561,0.03380153){\color[rgb]{0,0,0}\makebox(0,0)[lt]{\lineheight{0}\smash{\begin{tabular}[t]{l}$h$\end{tabular}}}}%
    \put(0.09099481,0.19411933){\color[rgb]{0,0,0}\makebox(0,0)[lt]{\lineheight{0}\smash{\begin{tabular}[t]{l}$g$\end{tabular}}}}%
    \put(-0.00216438,0.00885794){\color[rgb]{0,0,0}\makebox(0,0)[lt]{\lineheight{0}\smash{\begin{tabular}[t]{l}$g'$\end{tabular}}}}%
    \put(0.93312458,0.19468739){\color[rgb]{0,0,0}\makebox(0,0)[lt]{\lineheight{0}\smash{\begin{tabular}[t]{l}$h'$\end{tabular}}}}%
    \put(0,0){\includegraphics[width=\unitlength,page=2]{ancona-configuration-tex.pdf}}%
    \put(0.46023907,0.04080696){\color[rgb]{0,0,0}\makebox(0,0)[lt]{\lineheight{0}\smash{\begin{tabular}[t]{l}$>n$\end{tabular}}}}%
  \end{picture}%
\endgroup%
  \caption{Configuration of $g,g',h,h' \in G$}
  \label{ancona-configuration}
\end{figure}
\end{enumerate}
\end{theorem}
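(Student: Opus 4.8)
The plan is to follow the strategy of Gouëzel and Lalley in \cite{GouezelLalley:2013,Gouezel:2014}, transferred to the Green-operator setting, with the two parts treated separately since the uniform Ancona inequality of part \eqref{theo:Ancona-Gouezel inequalities - pt 1} is a prerequisite for part (ii). For part \eqref{theo:Ancona-Gouezel inequalities - pt 1}, the key input is the superexponential decay of the restricted Green operator $\G_R(\X_h|B(\id,n)^c)$ from Lemma \ref{lem:superexponential decay}, together with the first-passage decomposition of $\G_r(\X_h)$. Concretely, I would fix $D>0$ and a configuration $g,z,h$ with $z$ at distance $\leq D$ from a geodesic $[g,h]$, and split every path from the fiber over $g$ to the fiber over $h$ according to whether it passes through the ball $B(z,m)$ for a suitable (large but fixed) radius $m$: paths that avoid $B(z,m)$ stay in $B(z,m)^c$ and, since any such path must travel from a point near one end of the geodesic to the other, Lemma \ref{lem:superexponential decay} (applied after translating so that $z$ plays the role of $\id$, using that $z$ is within $D$ of the geodesic) bounds their contribution by $2^{-\lambda^m}\G_r(\X_h)(x,g)$; choosing $m$ so that this is $\leq \tfrac12 \G_r(\X_h)(x,g)$ lets us absorb that term. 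The remaining paths pass through $\Sigma\times B(z,m)$, so their contribution is bounded below by a constant times $\G_r(\X_z\,\G_r(\X_h))(x,g)$ after one application of Lemma \ref{lem:bounded distortion for the extended operator} to move base points and group elements within the finite ball $B(z,m)$, using the finite-type hypothesis and the comparison $\G_R(\X_g)(x,\id)\asymp\G_R(\X_{\id})(x,g)$. Uniformity in $r\in[1,R]$ is automatic because all estimates are monotone in $r$ and the constants from Lemmas \ref{lem:superexponential decay}, \ref{lem:bounded distortion for the extended operator} are uniform; the endpoint $r=R$ is covered because $T$ is assumed transient, so Proposition \ref{prop:finiteness of G_R} and Lemma \ref{lem:operator H_r} guarantee all operators involved are finite.

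For part (ii), the Gouëzel--Lalley inequality, I would iterate the uniform Ancona inequality of part \eqref{theo:Ancona-Gouezel inequalities - pt 1} along the long middle segment of the tree configuration in Figure \ref{ancona-configuration}. The point is that for a configuration approximated by a tree with a central edge of length $>n$, any of the four Green operators $\G_r(\X_h)(x,g)$, $\G_r(\X_{h'})(x,g)$, $\G_r(\X_h)(y,g')$, $\G_r(\X_{h'})(y,g')$ factorizes, up to uniformly bounded multiplicative error, through a common ``bottleneck'' fiber $\Sigma\times\{w\}$ sitting on the central edge. Following \cite{GouezelLalley:2013}, one sets up a contraction estimate: the ratio $\G_r(\X_h)(x,g)/\G_r(\X_{h'})(x,g)$ is, up to controlled error, a function of the value of the Green operator at $w$ only, and the dependence on the ``left'' data $(x,g)$ versus $(y,g')$ is damped by a factor $\lambda^n$ coming from the superexponential (hence certainly exponential) decay along the central edge, exactly as in Lemma \ref{lem:superexponential decay}. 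The replacement that must be made, as flagged in the paragraph preceding the theorem, is that the potential-theoretic step in \cite{GouezelLalley:2013} which invokes minimal harmonic functions to get the exponential decay of the fluctuations must instead be phrased in terms of minimal \emph{conformal measures} (the dual objects developed in Section \ref{sec:appendix}); concretely, one dualizes the first-passage decomposition and applies the decay estimate to the excessive measures rather than to subharmonic functions. The four-point ratio is then reassembled using this factorization through $w$, the errors compound to $1+O(\lambda^n)$, and one rewrites the resulting two-sided bound in the form $\bigl|\text{(cross ratio)}-1\bigr|\leq C\lambda^n$.

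The main obstacle I expect is the bookkeeping in the first-passage / last-passage decompositions once $\G_r$ is an operator on a function space rather than a scalar Green function: every ``path'' now carries a weight $\varphi_v(x)$ depending on a point $x\in\Sigma$, so splitting a path at its first entrance into $\Sigma\times B(z,m)$ produces a factor of the form $\G_r(\X_{b}\,\G_r(\X_h))$ evaluated at the entrance point, and one must control the distortion introduced by the fact that this entrance point varies over $\Sigma$. This is precisely where Lemma \ref{lemma:distortion of L} and Lemma \ref{lem:bounded distortion for the extended operator} do the work --- they show that the local Hölder fluctuation in the $\Sigma$-variable is dominated by a fixed multiplicative constant once one allows a bounded number $N_h$ of extra steps --- but one has to check that these constants do not accumulate when the Ancona inequality is iterated $O(n)$ times. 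The resolution, exactly as in \cite{Gouezel:2014}, is that the barriers in Lemma \ref{lem:superexponential decay} can be taken so that $\|L_i\|_{\ell^2}$ is as small as we like, so the geometrically decaying factors $2^{-\lambda^i}$ beat any fixed-constant accumulation; hence the iteration closes and the final constants $C$ and $\lambda\in(0,1)$ are uniform in $r\in[1,R]$, in the base points $x,y\in\Sigma$, and in the tree configuration, which is what the statement asserts.
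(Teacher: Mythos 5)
Your plan for part (i) has a genuine gap at the absorption step. Lemma \ref{lem:superexponential decay} gives an \emph{absolute} bound $\G_R(\X_h|B(z,m)^c)(x,g)\leq 2^{-\lambda^m}$, not the relative bound $2^{-\lambda^m}\,\G_r(\X_h)(x,g)$ you assert. To absorb the paths avoiding $B(z,m)$ into $\tfrac12\G_r(\X_h)(x,g)$ you must compare the absolute bound with the lower bound $\G_r(\X_h)(x,g)\gg p^{d(g,h)}$, which decays exponentially in $d(g,h)$, not in $m$; so for a \emph{fixed} radius $m$ and $d(g,h)\to\infty$ the required inequality $2^{-\lambda^m}\leq \tfrac12\,p^{d(g,h)}$ fails, while letting $m$ grow with $d(g,h)$ destroys uniformity, because the constants $K_{g^{-1}h}$ of Lemma \ref{lem:bounded distortion for the extended operator} are only controlled on a ball of bounded radius. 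This is precisely why the paper (following Gou\"ezel--Lalley) runs a multi-scale argument instead of a single bottleneck: it builds a chain of balls $B_i$ along $[g,h]$ with $\diam B_i\asymp d(3/4)^i$, expands $\X_{\{g\}}\G_r(\X_{\{h\}})=E_k+\sum_{i<k}D_i$ via the first/last-passage identity \eqref{eq:induction for ancona}, and at scale $i$ compares the superexponential gain $2^{-\lambda^{c\,d(3/4)^i}}$ for avoiding $B_{i+1}$ with the exponential cost $p^{d(3/4)^{i-1}}$ of crossing a segment of the \emph{same} length, so the relative errors $\alpha_{d,i}$ sum to at most $1/2$ once the remaining scale hits a fixed threshold $M$; only then is the bottleneck around $z$ of bounded radius and the constant uniform in $d(g,h)$ and $r\in[1,R]$. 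Since your part (ii) rests on part (i), this gap propagates.

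In part (ii) the mechanism you invoke is also not the right one: the factor $\lambda^n$ does not come from the superexponential decay of Lemma \ref{lem:superexponential decay} along the central edge, nor from taking the barriers with small $\ell^2$-norm. In the paper the geodesic from $\xi$ to $\eta$ is cut into $k\asymp n/D$ nested regions $\Omega_j$ with layers $\Lambda_j$; a \emph{relative} version \eqref{eq: strong ancona inequality - relative} of the uniform Ancona inequality (which your proposal never formulates, but which is needed for the iteration to make sense for the restricted operators $\G_r(\cdot|\Omega_j)$) shows that every measure which is $1/r$-conformal on $\mathcal{X}_{\Omega_j}$ and charges no infinite $\Omega_j$-orbit is comparable within a fixed constant $c^{\pm4}$ to a reference measure $\nu_j$ built from $\G_r(\cdot|\Omega_j)(\omega,z_j)$; telescoping these comparisons over the $k$ crossings gives the contraction $\alpha^{k-1}$ with $\alpha=1-c^{-4}$, hence $\lambda^n$ with $\lambda=\alpha^{1/D}$. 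In other words, the Ancona constants are not ``beaten'' by doubly exponential decay --- they are used, each bottleneck contributing one fixed loss $1-c^{-4}$, and it is the number of bottlenecks, proportional to $n$, that produces the exponential rate. Your idea of replacing minimal harmonic functions by conformal/excessive measures is indeed the correct adaptation (this is what the appendix supplies), but as written the proposal lacks both the relative Ancona inequality and the measure-contraction scheme that actually yields the bound $C\lambda^n$.
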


\begin{proof} We  closely follow the proofs of Theorems 4.1, 4.3 and 4.6 in \cite{GouezelLalley:2013}. In here, we also  make use of the following notational convention as in  \cite{GhysHarpe:1990}. 
Even $G$ is not necessarily a geodesic space, there is, for any pair $g,h \in G$ with distance $d = d(g,h)$, a path of length $d$ from $g$ to $h$ in the Cayley graph. By identifying the edges  with copies of $[0,1]$, one  obtains a continuous path $\gamma:[0,d] \to  G$  from $g$ to $h$ which is an isometry. As $\gamma$ not necessarily is uniquely determined, we refer to $\gamma$ as a geodesic from $g$ to $h$ and denote it by $[g,h]$. Furthermore, in order to slightly simplify the parameters, we assume that $G$ is $\delta/4$-hyperbolic in order to guarantee that triangles are $\delta$-thin, that is $[g,h]$ is always contained in a $\delta$-neighbourhood of $[g,w] \cup [h,w]$, for any configuration of $g,h,w \in G$.   

\medskip
\noindent\textsc{Part (i).}
For the proof of  part (i), assume that that $[g,h]$ is a geodesic segment in $G$, that $z \in [g,h] \setminus \{g,h\} $ and set $d:=d(g,h)$. Furthermore, let $\gamma:[0,d] \to  [g,h]$ refer to the isometry obtained by identifying the edges with copies of $[0,1]$ such that $\gamma(0) = g$ and $\gamma(d) = h$. We now construct finite sequences of $t_i ,s_i \in [0,d]$ and balls $B_i$ as follows. To begin, set $s_0 = 0$ and $t_0 = d$. The $s_i,t_i$  are then inductively constructed as follows (see Figure \ref{fig: strong ancona construction}).

\begin{enumerate}
\item If $t_i - s_i \leq 16$ then the induction stops. In fact, Figure \ref{fig: strong ancona construction} illustrates a possible last step in the iteration.
\item If $t_i - s_i > 16$ and $d(\gamma(s_i),z) \geq d(\gamma(t_i),z)$, then $s_{i+1}  =  s_i + (t_i - s_i)/4$ and $t_{i+1}  =  t_i$. As $s_{i+1} - s_i  = (t_i - s_i)/4 > 4$, there exists a ball $B_{i+1}$ is ball with center in $ \gamma((s_i,s_{i+1})) \cap G$ and radius in $\N$ such that $B_{i+1}$ covers $\gamma((s_i,s_{i+1}))$ up to two segments of total length at most 3.
\item If $t_i - s_i > 16$ and $d(\gamma(s_i),z) \leq d(\gamma(t_i),z)$, then $s_{i+1}  =  s_i$ and $t_{i+1}  =  t_i -  (t_i - s_i)/4$. As above, there exists a ball $B_{i+1}$ with center in $\gamma((t_{i+1},t_{i})) \cap G$ and radius $r_i \in \N$ such that $B_{i+1}$ covers $\gamma((t_{i+1},t_{i}))$ up to two segments of total length at most 3.
\end{enumerate}
Now assume that the induction stopped at step $n$. Then it is straightforward to see that $s_0 \leq s_i \cdots \leq s_{n} < t_n \leq t_{i+1} \cdots \leq t_0$, $t_i - s_i = d (3/4)^i$, that
$   (3/4)^i d  \leq  16 \, \diam B_{i+1} \leq 4 (3/4)^i d$ and that the distance between two adjacent balls is at most 4.
\begin{figure}[htbp] 
   \centering
   \def\svgwidth{0.8\textwidth}
\begingroup%
  \makeatletter%
  \providecommand\color[2][]{%
    \errmessage{(Inkscape) Color is used for the text in Inkscape, but the package 'color.sty' is not loaded}%
    \renewcommand\color[2][]{}%
  }%
  \providecommand\transparent[1]{%
    \errmessage{(Inkscape) Transparency is used (non-zero) for the text in Inkscape, but the package 'transparent.sty' is not loaded}%
    \renewcommand\transparent[1]{}%
  }%
  \providecommand\rotatebox[2]{#2}%
  \newcommand*\fsize{\dimexpr\f@size pt\relax}%
  \newcommand*\lineheight[1]{\fontsize{\fsize}{#1\fsize}\selectfont}%
  \ifx\svgwidth\undefined%
    \setlength{\unitlength}{1456.92328129bp}%
    \ifx\svgscale\undefined%
      \relax%
    \else%
      \setlength{\unitlength}{\unitlength * \real{\svgscale}}%
    \fi%
  \else%
    \setlength{\unitlength}{\svgwidth}%
  \fi%
  \global\let\svgwidth\undefined%
  \global\let\svgscale\undefined%
  \makeatother%
  \begin{picture}(1,0.14879454)%
    \lineheight{1}%
    \setlength\tabcolsep{0pt}%
    \put(0,0){\includegraphics[width=\unitlength,page=1]{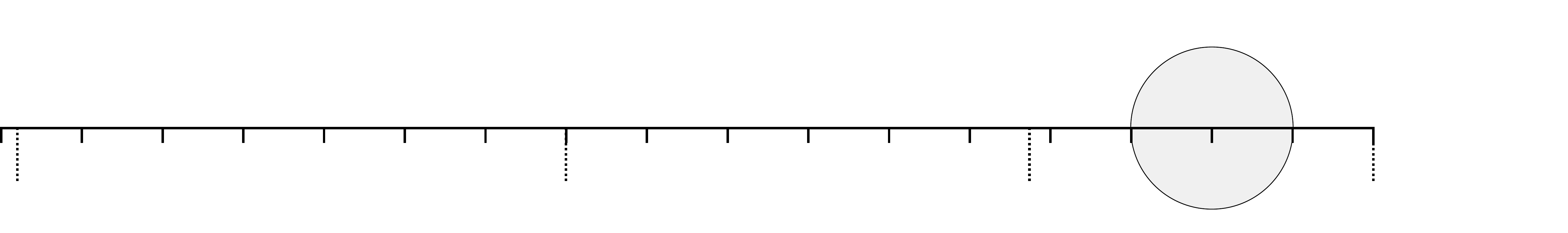}}%
    \put(0.74525461,0.13294887){\color[rgb]{0,0,0}\makebox(0,0)[lt]{\lineheight{1.25}\smash{\begin{tabular}[t]{l}$B_{i+1}$\end{tabular}}}}%
    \put(0.84254865,0.004253){\color[rgb]{0,0,0}\makebox(0,0)[lt]{\lineheight{1.25}\smash{\begin{tabular}[t]{l}$\gamma(t_i)$\end{tabular}}}}%
    \put(0.65156404,0.004253){\color[rgb]{0,0,0}\makebox(0,0)[lt]{\lineheight{1.25}\smash{\begin{tabular}[t]{l}$\gamma(t_{i+1})$\end{tabular}}}}%
    \put(0.0060256,0.004253){\color[rgb]{0,0,0}\makebox(0,0)[lt]{\lineheight{1.25}\smash{\begin{tabular}[t]{l}$\gamma(t_{i+1})$\end{tabular}}}}%
    \put(0.354534,0.004253){\color[rgb]{0,0,0}\makebox(0,0)[lt]{\lineheight{1.25}\smash{\begin{tabular}[t]{l}$z$\end{tabular}}}}%
  \end{picture}%
\endgroup%
   \caption{The construction of $B_{i+1}$}
   \label{fig: strong ancona construction}
\end{figure}

Now assume that $f \in \cH$ and that $B_i, B_j, B_k$ are
 three of these balls  in this order from the left to the right with respect to $\gamma$.
We now expand $\X_{B_i} \G_r(f \X_{B_k})$ according to the position of $B_j$ relative to $z$ as follows.
\begin{equation}\label{eq:induction for ancona} \X_{B_i} \G_r(f \X_{B_k}) = \X_{B_i} \G_r(f \X_{B_k}| B_j^c) +
\begin{cases}
  \X_{B_i} \G_r(\X_{B_j} \G_r(f \X_{B_k}) | B_j^c) & :\, B_j \hbox{ on the left of }z \\
  \X_{B_i} \G_r(\X_{B_j} \G_r(f \X_{B_k}| B_j^c)) & :\, B_j \hbox{ on the right of }z
\end{cases}
\end{equation}
That is, in the first case, we separate the orbits starting in $B_k$ and ending in $B_i$ at their last visit to $B_j$ whereas in the second case at their first visit to $B_j$.
We now apply this expansion inductively as follows. In the first case, we apply \eqref{eq:induction for ancona} to $\X_{B_j} \G_r(f \X_{B_k})$ with respect to $B_l$ between  $B_j$ and $B_k$,  and in the second case to  $\X_{B_i} \G_r( f^\ast \X_{B_j})$ with respect to   $B_l$ between  $B_i$ and  $B_j$, where $f^\ast = \G_r(f \X_{B_k}| B_j^c)$. In order to obtain a manageable expression, set
\[\G =\G_r, \quad \G_j = \G_r(\,\cdot \,| B_j^c), \quad {L}_j(f) := \G_r( \X_{B_j} \cdot  f | B_j^c), \quad
\quad {R}_j(f) := \X_{B_j} \cdot \G_r(f| B_j^c). \]
Furthermore, for $k \leq n$ assume that the $a_k{(i)} = 1, \ldots k$ ($i=1,\ldots k$) are determined
 by the order of the $B_i$ along the path $\gamma$ in the sense that $B_{a_k(i)}$ is followed by $B_{a_k(i+1)}$ etc. and that $\ell_k $ is given by $B_{a_k({\ell_k})} < z < B_{a_k({\ell_k+1})}$. Set
\begin{align*}
E_k & :=  \X_{\{g\}} \cdot L_{a_k(1)} \cdots  L_{a_k({\ell_k})} \circ \G \circ R_{a_k({\ell_k+1})} \cdots  R_{a_k(k)} (\X_{\{h\}}),\\
D_k & :=   \X_{\{g\}} \cdot L_{a_k(1)}   \cdots  L_{a_k({\ell_k})}  \circ \G_{k+1} \circ R_{a_k({\ell_k+1})} \cdots  R_{a_k(k)} (\X_{\{h\}}),\\
D_0 & := \X_{\{g\}} \cdot \G_1 (\X_{\{h\}}).
\end{align*}
In Figure \ref{fig: strong ancona construction 2}, typical orbits related to $D_3$ and $E_4$ are illustrated. That is, in the first case, the orbit is stopped at the first visit to $B_1$, then passes without hitting $B_4$ to the last visit to $B_3$ and through the last visit to $B_2$ to $g$, whereas in the second case, the orbit has to pass through $B_4$.
We now show by induction that $\X_{\{g\}}\G_r(\X_{\{h\}}) = E_k + \sum_{i=0}^{k-1} D_i$. If $k=1$, then $a_1(1)=1$ and $\ell_1 \in \{0,1\}$. In particular,
\[ E_1 = \begin{cases} \X_{\{g\}} \cdot \G_1( \X_{B_1} \cdot \G(\X_{\{h\}})) &:\, \ell_1=1 \\
 \X_{\{g\}} \cdot \G( \X_{B_1}  \cdot \G_1( \X_{\{h\}})) &:\, \ell_1=0 \end{cases}  \]
Hence, $\X_{\{g\}}\G_r(\X_{\{h\}}) = E_1 + D_0$ by \eqref{eq:induction for ancona}. In order to extend the result to any $k \leq n$, it suffices to apply \eqref{eq:induction for ancona} to
\[\X_{B_{a_k({\ell_k})}} \left(\G\left( \X_{B_{a_k({\ell_k}+1)}}\;\cdot\;\right) - \G\left(  \X_{B_{a_k({\ell_k+1})}}\;\cdot\; | {B_{a_k(\ell_{k+1})}}^c\right) \right) \]
in order to show that $E_k = E_{k+1} + D_k$, and, in particular, $\X_{\{g\}}\G_r(\X_{\{h\}}) = E_k + \sum_{i=0}^{k-1} D_i$ for all $k \leq n$ by induction.
\begin{figure}[htbp] 
   \centering
      \def\svgwidth{0.9\textwidth}
	\begingroup%
  \makeatletter%
  \providecommand\color[2][]{%
    \errmessage{(Inkscape) Color is used for the text in Inkscape, but the package 'color.sty' is not loaded}%
    \renewcommand\color[2][]{}%
  }%
  \providecommand\transparent[1]{%
    \errmessage{(Inkscape) Transparency is used (non-zero) for the text in Inkscape, but the package 'transparent.sty' is not loaded}%
    \renewcommand\transparent[1]{}%
  }%
  \providecommand\rotatebox[2]{#2}%
  \newcommand*\fsize{\dimexpr\f@size pt\relax}%
  \newcommand*\lineheight[1]{\fontsize{\fsize}{#1\fsize}\selectfont}%
  \ifx\svgwidth\undefined%
    \setlength{\unitlength}{1555.36565749bp}%
    \ifx\svgscale\undefined%
      \relax%
    \else%
      \setlength{\unitlength}{\unitlength * \real{\svgscale}}%
    \fi%
  \else%
    \setlength{\unitlength}{\svgwidth}%
  \fi%
  \global\let\svgwidth\undefined%
  \global\let\svgscale\undefined%
  \makeatother%
  \begin{picture}(1,0.35552119)%
    \lineheight{1}%
    \setlength\tabcolsep{0pt}%
    \put(0,0){\includegraphics[width=\unitlength,page=1]{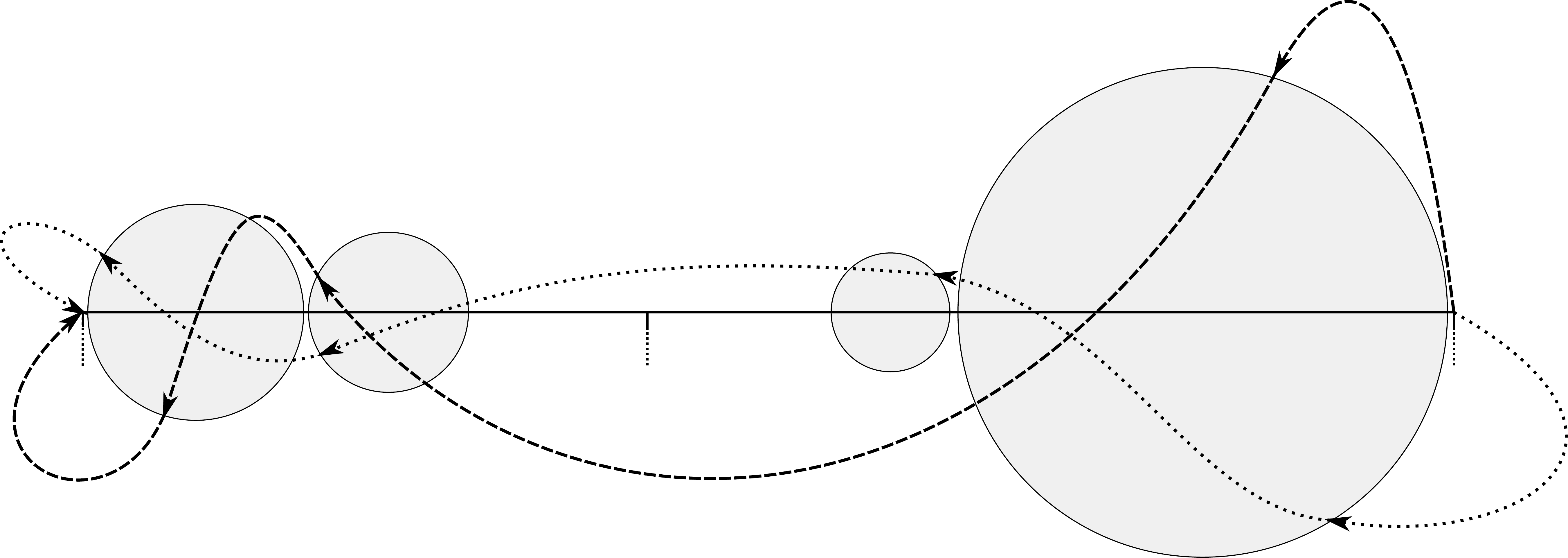}}%
    \put(0.10868318,0.23954458){\color[rgb]{0,0,0}\makebox(0,0)[lt]{\lineheight{1.25}\smash{\begin{tabular}[t]{l}$B_2$\end{tabular}}}}%
    \put(0.24128865,0.21945284){\color[rgb]{0,0,0}\makebox(0,0)[lt]{\lineheight{1.25}\smash{\begin{tabular}[t]{l}$B_3$\end{tabular}}}}%
    \put(0.55672895,0.20639321){\color[rgb]{0,0,0}\makebox(0,0)[lt]{\lineheight{1.25}\smash{\begin{tabular}[t]{l}$B_4$\end{tabular}}}}%
    \put(0.74760046,0.32493447){\color[rgb]{0,0,0}\makebox(0,0)[lt]{\lineheight{1.25}\smash{\begin{tabular}[t]{l}$B_1$\end{tabular}}}}%
    \put(0.40704549,0.09725489){\color[rgb]{0,0,0}\makebox(0,0)[lt]{\lineheight{1.25}\smash{\begin{tabular}[t]{l}$z$\end{tabular}}}}%
    \put(0.91737565,0.09725489){\color[rgb]{0,0,0}\makebox(0,0)[lt]{\lineheight{1.25}\smash{\begin{tabular}[t]{l}$h$\end{tabular}}}}%
    \put(0.04238044,0.09725489){\color[rgb]{0,0,0}\makebox(0,0)[lt]{\lineheight{1.25}\smash{\begin{tabular}[t]{l}$g$\end{tabular}}}}%
  \end{picture}%
\endgroup%
   \caption{Typical orbits for $D_3$ (slashed) and $E_4$ (dotted)}
   \label{fig: strong ancona construction 2}
\end{figure}
Now assume that $u \in B_{a_k({\ell_k})}$ and $v \in  B_{a_k({\ell_k}+1)}$. It follows from $\delta$-hyperbolicity that the distance from a geodesic segment $[u,v]$  to the center of $B_{k+1}$ is at most $\delta$. In particular, there is ball of radius $\diam B_{k+1}/2 - (\delta +1)$ with center in $[u,v] \cap G$ which is contained in $B_{k+1}$.
It now follows from Lemma \ref{lem:superexponential decay} and the construction that
\[\X_{\{u\}} \G_r( \1_{[a,{v}]}| B_{k+1}^c) \leq \X_{\{u\}} \G_r( \X_{\{v\}}| B_{k+1}^c) \leq 2^{-\lambda^{\diam(B_{k+1})/2  - (\delta +1)}} \leq 2^{- \lambda^{\frac{d}{32} \left(\frac{3}{4}\right)^k - (\delta +1)} } \]
for any $a \in \cW^1$,
provided that $d (3/4)^k\geq  32( n_0 + 1 +\delta)$. As $d\mu/d\mu\circ \theta$ is bounded away from zero, there
 exists $p \in (0,1)$ such that $\G_r( \1_{[a, v]}) (x,u) \gg p^{d(u,v)}$ for all $x\in \Sigma$.
Again by construction and the triangle inequality, it follows that $d(u,v) \leq |t_{k-1} - s_{k-1}| = d (3/4)^{k-1}$.
Hence, there exist $\alpha>1$, $\beta>1$ such that
\begin{equation} \label{eq:estimate-from-below-for-Green}
{\X_{\{u\}} \G_r( \1_{[a,{v}]} | B_{k+1}^c)} \leq \alpha^{-\beta^{d(3/4)^k} + d(3/4)^k} {\X_{\{u\}} \G_r(  \1_{[a,{v}]})}.\end{equation}
Now set $\alpha_{d,k} := \alpha^{-\beta^{d(3/4)^k} + d(3/4)^k}$ and suppose that  $f>0$ satisfies $\sup \{D_\alpha(\log f)(z,v): z \in \Sigma \} \leq \log C$. Then
\begin{align*} \X_{\{u\}} \G_r( \X_{\{v\}} \ f  | B_{k+1}^c)
& \leq
\sum_{a \in \cW^1}  \sup_{z \in [a,v]} f(x,v)\  {\X_{\{u\}} \G_r(\1_{[a,{v}]}  | B_{k+1}^c)} \\
& \leq \sum_{a \in \cW^1}  \sup_{x \in [a]} f(x,v)\  \alpha_{d,k}  \X_{\{u\}} \G_r(  \1_{[a,{v}]})
\leq C \alpha_{d,k}  \X_{\{u\}} \G_r( \X_{\{u\}}\  f   ) \end{align*}
%
By the Gibbs-Markov property of $\theta$, it therefore follows that
\begin{align*}
\sum_{i=1}^{k-1} D_i  & = \sum_{i=1}^{k-1} \sum_{u \in B_{a_i({\ell_i})},\atop v \in  B_{a_i({\ell_i+1})} }
\X_{\{g\}} \cdot L_{a_i(1)} \left(    \cdots  L_{a_i({\ell_i})}\left(
{\X_{\{u\}} \G_r\left( \X_{\{v\}}  R_{a_k({\ell_i+1})}\left( \cdots  (\X_{\{h\}})\right)| B_{i+1}^c\right)}\right)\right)\\
& \leq C_\varphi \sum_{i=1}^{k-1} \sum_{u \in B_{a_i({\ell_i})},\atop v \in  B_{a_i({\ell_i+1})} }
\alpha_{d,i} \X_{\{g\}} \cdot L_{a_i(1)} \left(    \cdots  L_{a_i({\ell_i})}\left(
\X_{\{u\}} \G_r\left( \X_{\{v\}}  R_{a_k({\ell_i+1})}\left( \cdots  (\X_{\{h\}})\right)\right)\right)\right)\\
& \leq \left( \textstyle \sum_{i=1}^{k-1}  \alpha_{d,i} \right) \X_{\{g\}}\G_r(\X_{\{h\}}).
\end{align*}
The next step relies on the fact that
$\alpha_{d,k}$ and $t_k - s_k$ are functions of $d(3/4)^k$, which allows to choose $M$ such that $t_k - s_k \geq M$ implies that $ \sum_{i=1}^{k-1}  \alpha_{d,i} \leq \frac{1}{2}$. For $k$ maximal with this property, it also follows that $B_{a_k(\ell_k)}$ and $B_{a_k(\ell_k+1)}$ are contained in a ball with center $z$ and radius $t_{k-1} - s_{k-1} + \diam B_{k-1}$. As the radius is uniformly bounded by a multiple of $M$, there exists $C> 0$ such that
\[   \X_{\{g\}} \cdots   \X_{\{u\}} \G_r ( \X_{\{v\}} \cdots \G_r(\X_{\{h\}}))  \leq
C   \X_{\{g\}} \cdots   \X_{\{u\}} \G_r ( \X_{\{z\}} \G_r( \X_{\{v\}} \cdots \G_r(\X_{\{h\}})))  \]
for all $u \in  B_{a_k(\ell_k)}$ and   $v \in B_{a_k(\ell_k+1)}$.
Putting these estimates together yields
\begin{align*} \X_{\{g\}}\G_r(\X_{\{h\}}) & = E_k + \sum_{i=0}^{k-1} D_i \leq
E_k + \sum_{i=0}^{k-1}  \alpha_{d,k}  E_i \leq E_k + \frac{1}{2} \X_{\{g\}}\G_r(\X_{\{h\}})  \\
& \leq \sum_{u \in  B_{a_k(\ell_k)},  v \in B_{a_k(\ell_k+1)}}
 \X_{\{g\}} \cdots   \X_{\{u\}} \G_r ( \X_{\{v\}} \cdots \G_r(\X_{\{h\}}))
  +   \frac{1}{2} \X_{\{g\}}\G_r(\X_{\{h\}}) \\
  & \leq  C \sum_{u,v}  \X_{\{g\}} \cdots   \X_{\{u\}} \G_r ( \X_{\{z\}} \G_r( \X_{\{v\}} \cdots \G_r(\X_{\{h\}})))   + \frac{1}{2} \X_{\{g\}}\G_r(\X_{\{h\}}) \\
  & \leq C  \X_{\{g\}}  \G_r ( \X_{\{z\}} \G_r(\X_{\{h\}})) + \frac{1}{2} \X_{\{g\}}\G_r(\X_{\{h\}}).
\end{align*}
Hence, $ \X_{\{g\}}\G_r(\X_{\{h\}}) \leq 2C  \X_{\{g\}}  \G_r ( \X_{\{z\}} \G_r(\X_{\{h\}}))$, proving \eqref{eq: strong ancona inequality} for $z \in [g,h]$.

Now assume that $z$ is $D$-close to the geodesic segment $[g,h]$. In particular, there exists $z' \in [g,h]$ with $d(z,z')< D$ and \eqref{eq: strong ancona inequality} holds with respect to $z'$.
Furthermore, note that $\{g: d(g,\id) \leq D \}$ is a finite set as $G$ is finitely generated. As $z^{-1}z' \in  \{g: d(g,\id) \leq D \}$, an application of  Lemma \ref{lem:bounded distortion for the extended operator} gives a uniform bound for
$|\log \G_r(\X_{z'} \G_r(\X_{h}))(x,g)/\G_r(\X_z \G_r(\X_{h}))(x,g)|$ which implies that \eqref{eq: strong ancona inequality} holds with respect to a different constant.

\medskip
\noindent{\textsc{An extension of Part (i).}} In order to deduce Part (ii) from Ancona's inequality, it is necessary to  extend part (i). In order to do so, observe that the induction relies on \eqref{eq:induction for ancona}, which is obtained through a decomposition of orbits. Hence, provided that $\Omega$ is a  set which contains $\bigcup_k B_k$, equation \eqref{eq:induction for ancona} generalizes to
\begin{equation*}\nonumber \label{eq:induction for ancona-2} \X_{B_i} \G_r(f \X_{B_k}|\Omega) = \X_{B_i} \G_r(f \X_{B_k}| B_j^c \cap \Omega) +
\begin{cases}
  \X_{B_i} \G_r(\X_{B_j} \G_r(f \X_{B_k}|\Omega) | B_j^c\cap \Omega) & :\, B_j \hbox{ left of }z \\
  \X_{B_i} \G_r(\X_{B_j} \G_r(f \X_{B_k}| B_j^c \cap \Omega)\Omega) & :\, B_j \hbox{ right of }z,
\end{cases}
\end{equation*}
which then implies that a version of the induction  $E_k + \sum_{i=0}^{k-1} D_i$ holds with respect to orbits which never leave $\Omega$. Moreover, this generalisation does not cause any problem with the application of
Lemma \ref{lem:superexponential decay} as the estimates in there only might get better. However, the estimate
\eqref{eq:estimate-from-below-for-Green} relies on the fact that there exists an orbit connecting $u$ and $v$. Therefore, it is also required that $\Omega$ contains a $M$-neighbourhood of the convex hull of  $\bigcup_k B_k$, where $M$ depends on the topological transitivity of $T$. That is, $M$ has to be chosen such that for any $a \in \cW$, $u \in B_k$, $v \in B_l$, there exists $x \in [a]$ and $n \in \N$ such that
the geodesic from $u$ to $v$ is contained in $\{u\kappa^j(x):0\leq j \leq n\}$, the orbit $\{u\kappa^j(x): 0\leq j \leq n\} \subset \Omega$ and $\log n \ll d(u,v)$. As the remaining assertions follow in verbatim, we obtain the following relative version of Part (i) by adding the trivial estimate,
 provided that $\Omega$ contains the  $M$-neighbourhood of the convex hull of  $\bigcup_k B_k$.
\begin{equation}\label{eq: strong ancona inequality - relative} \G_r\left(\X_z \G_r\left(\X_{h}|\Omega\right)|\Omega\right)(x,g) \leq  \G_r(\X_{h}|\Omega)(x,g) \leq C \G_r\left(\X_z \G_r\left(\X_{h}|\Omega\right)|\Omega\right)(x,g).\end{equation}

\medskip
\noindent{\textsc{Part (ii).}}
The adaption of the arguments in  in \cite{GouezelLalley:2013} for the proof of (ii) depends on the potential theory of conformal and excessive measures as developed in the appendix (Section \ref{sec:appendix}) of this article. In particular, it is necessary to anticipate the following notion from Section \ref{sec:geometry-Martin}, which also is the central object in Theorem \ref{theo:geometric-boundary} below.
We refer to a Radon measure $m$ as  {$\lambda$-excessive} or excessive if
$\cL^\ast(m) \leq \lambda m$, that is
$\cL^\ast(m)$ is absolutely continuous with respect to $m$ and $d\cL^\ast(m)/dm \leq \lambda$.
Moreover, we say that $m$ is conformal on $B$ if $\cL^\ast(m)|_B =  \lambda m|_B$.

We begin with an argument from geometry.
For $\xi,\eta \in G$ choose $k \in \N$ such that \[ D:= {d(\xi,\eta)}/{k} \geq 2 \max\{d(\id,\kappa(x)): x \in \Sigma \}.\]
 For $1\leq j \leq k$, let $z_j \in G$ refer the closest point on the geodesic arc from $\xi$ to $\eta$ with
 $(z_j \cdot \xi)_\eta > jD + D/2$ and set
\[ \Omega_j :=  \left\{ h \in G :  (h \cdot \xi)_\eta \geq jD \right\}, \; \Lambda_j :=  \left\{ h \in \Omega_j :
 (h \cdot \xi)_\eta  \leq D/2 +jD \right\}. \]
Observe that $ \Omega_j \supset
\Omega_{j+1}$ and that for $h \in \Omega_j$, the geodesic  from $h$ to $\eta$ passes through $B(z_j,\delta)$ by the thin triangle property. For  $h \in \Omega_j$ and
$g \in \Omega_{j}^c$, it follows from the construction that $(h \cdot \xi)_\eta > (g \cdot \xi)_\eta$. By  approximation by a tree, the geodesic from $h$ to $g$ has to pass through $B(z_j,4\delta)$. If, in addition, $h \in \Omega_{j+1}$, it follows from the choice of $D$ that any orbit from $h$ to $g$ has to pass through $\Lambda_j$, say at $z$. By a further approximation by a tree, also the geodesic from $h$ to $z$ visits $B(z_j,4\delta)$.

This geometrical construction now allows to deduce the following estimates. By decomposing orbits with respect to the last visit to $\Lambda_j$, it follows from \eqref{eq: strong ancona inequality - relative} and an extension of Lemma \ref{lem:bounded distortion for the extended operator} to $\G_r(\cdot|\Omega_j)$ that there exists $c\geq 1$ such that, for any $\omega \in \Sigma$,
\begin{align*}
\X_g \G_r(\X_h |\Omega_j) &= \X_g \sum_{z \in\Lambda_j} \G_r( \X_z \G_r(\X_h |\Omega_j)\; | \Omega_j \setminus \Lambda_j) \\
& = c^{\pm 1} \X_g \sum_{z \in\Lambda_j}
 \G_r(
 \X_z  \G_r(\X_{z_j} \G_r(\X_{h}|\Omega_j)  |\Omega_j)
 | \Omega_j \setminus \Lambda_j)\\
 & = c^{\pm 2} \G_r(\X_{h}|\Omega_j) (\omega,z_j) \,\cdot \, \X_g \sum_{z \in\Lambda_j}
 \G_r(
 \X_z  \G_r(\X_{z_j}  |\Omega_j)
 | \Omega_j \setminus \Lambda_j)
 \\
 & = \G_r(\X_{h}|\Omega_j) (\omega,z_j)  \,\cdot \, \X_g \G_r(\X_{z_j} |\Omega_j)
 ,
\end{align*}
Given $A \subset G$, set $\mathbf{X}_A := \{(x,g): x \in \Sigma, g \in A\}$. 
Now assume that, for some  $1\leq j \leq k$, $m$ is a Radon measure which is $1/r$-conformal and non-trivial on $\mathcal{X}_{\Omega_j}$ such that $m \left( \bigcap_{n=0}^\infty T^{-n}(\mathcal{X}_{\Omega_j})\right)=0$.
In particular, we have that
\[A_k:= T^{-k}(\mathcal{X}_{\Omega_j}^c) \cap \bigcap_{n=0}^{k-1} T^{-n}(\mathcal{X}_{\Omega_j}), \quad k=1,2,\ldots\]
is a partition of $\mathcal{X}_{\Omega_j}$ up to a set of measure zero. Hence, for $h \in {\Omega_{j+1}}$,
\begin{align*}
m(\mathcal{X}_h) & = \sum_{k=1}^\infty \int_{A_k} \X_h dm
	  = \sum_{k=1}^\infty \int \X_{\Omega_j^c} r^{k-1}\cL\left(\X_{\Omega_j} \cL ( \cdots \cL(\X_h)\cdots )\right)dm\\
	& = \frac1r \int \X_{\Omega_j^c} \G_r(\X_h|{\Omega_j}) dm
	= \frac{c^{\pm 2}}{r}   \G_r(\X_{h}|\Omega_j) (\omega,z_j)  \int \X_{\Omega_j^c}  \G_r(\X_{z_j} |\Omega_j)  dm \\
	& = c^{\pm 2}\G_r(\X_{h}|\Omega_j) (\omega,z_j)  \,\cdot \, m(\mathcal{X}_{z_j}).
\end{align*}
Setting $h=\xi$, it follows that $m(\mathcal{X}_{z_j}) = c^{\pm 2}m(\mathcal{X}_\xi)/ \G_r(\X_{\xi}|\Omega_j) (\omega,z_j)$.  Hence, for $\nu_j$ defined through
\[ \int f d\nu_{j} := c^{-4} \frac{\G_r( f |\Omega_j) (\omega,z_j)}{\G_r(\X_{\xi}|\Omega_j) (\omega,z_j)},\]
we have that, for any $h \in {\Omega_{j+1}}$,
\begin{equation}\label{eq:contraction}
c^{-4}  m(\mathcal{X}_h) \leq  {m(\mathcal{X}_\xi)} \nu_{j}(\mathcal{X}_h) \leq c^4 m(\mathcal{X}_h).
\end{equation}
Observe that in most cases, $m$ and $\nu_j$ are non-singular with respect
to each other. In order to apply  \eqref{eq:contraction}  also to $m=\nu_{j-1}$, note
that  $\nu_j$ is  $1/r$-conformal on $\left(T^{-1}(\mathcal{X}_{\Omega_j})\cap \mathcal{X}_{\Omega_j} \right) \setminus \{(\omega,z_j)\} \supset \mathcal{X}_{\Omega_{j+1}}$ as
\[ \G_r(\cL(f)|\Omega_j) = \frac1r \left(\G_r(f|\Omega_j) -f  \right) + \G_r(\X_{\Omega_j^c}\cL(f)|\Omega_j) - \X_{\Omega_j^c}\cL(f),  \]
and that, by construction, $\nu_j \left( \bigcap_{n=0}^\infty T^{-n}(  \mathcal{X}_{\Omega_{j+1}})\right)=0$.
It is worth noting that these two properties are needed for the lower bound of $(\mu_i - \nu)(\mathcal{X}_h)$ below whereas the upper bound is independent from this.

After these preparations, we are now in position to prove part (ii). In order to do so, for $\alpha := 1-c^{-4}$, $x_1,x_2 \in \Sigma$ and $g_1,g_2 \in \Omega_1^c$, let $\mu_1,\mu_2,\nu$ refer to the Radon measures defined by
\[ \mu_i(f) := \frac{\G_r(f)(x_i,g_i)}{\G_r(\X_\xi)(x_i,g_i)}, \quad
  \nu := \sum_{j=1}^{k-1} \alpha^{j-1} \nu_j.\]
By inductively applying \eqref{eq:contraction} to $m = \mu$ for the estimate from above and $m=\nu_j$ for the estimate from below, it follows that, for $h \in \Omega_k$,
\begin{align*}
(\mu_i - \nu)(\mathcal{X}_h)  & =  (\mu_i  - \nu_1)(\mathcal{X}_h) -
  \sum_{j=2}^{k-1} \alpha^{j-1} \nu_j(\mathcal{X}_h)  \leq  \alpha \left( \mu_i(\mathcal{X}_h) -  \sum_{j=2}^{k-1} \alpha^{j-2} \nu_j(\mathcal{X}_h) \right) \\
  & \leq \alpha^{k-1} \mu_i(\mathcal{X}_h),\\
  (\mu_i - \nu)(\mathcal{X}_h)  & =  (\mu_i  - \nu_1)(\mathcal{X}_h)  -
  \sum_{j=2}^{k-1} \alpha^{j-1} \nu_j(\mathcal{X}_h)
  \geq \alpha  \left( \nu_1(\mathcal{X}_h) -  \sum_{j=2}^{k-1} \alpha^{j-2} \nu_j(\mathcal{X}_h) \right) \\
 & \geq   \alpha^{k-1} \nu_{k-1}(\mathcal{X}_h) \geq 0.
\end{align*}
Moreover, note that \eqref{eq:contraction} implies that $\mu_1(\mathcal{X}_h) \asymp \mu_2(\mathcal{X}_h)$. Hence,
\begin{align*}
\left| \frac{\mu_1(\mathcal{X}_h)}{\mu_2(\mathcal{X}_h)}  -1 \right| & = \left| \frac{\mu_1(\mathcal{X}_h)- \mu_2(\mathcal{X}_h)}{\mu_2(\mathcal{X}_h)}   \right|  =  \left| \frac{(\mu_1- \nu )(\mathcal{X}_h)+ ( \mu_2 -\nu) (\mathcal{X}_h) }{\mu_2(\mathcal{X}_h)}   \right|\\
& \leq \alpha^{k-1} \frac{\mu_1(\mathcal{X}_h) + \mu_2(\mathcal{X}_h)}{\mu_2(\mathcal{X}_h)} \ll \alpha^{k-1} \frac{\mu_2(\mathcal{X}_h) + \mu_2(\mathcal{X}_h)}{\mu_2(\mathcal{X}_h)} \ll \alpha^{k},
\end{align*}
which is part (ii) of the theorem for $h'= \xi$ and $\lambda := \alpha^{1/D}$. The remaining assertion, that is $h'\in \Omega_k$ easily follows from this.
\end{proof}

\section{Geometry of the Martin boundary}\label{sec:geometry-Martin}

Theorem \ref{theo:Ancona-Gouezel inequalities} has immediate implications for a boundary theory of group extensions as it indicates what might be the canonical notion of a Martin boundary through a geometrization. Namely,
the second estimate in Theorem \ref{theo:Ancona-Gouezel inequalities} allows to obtain a
 geometrization by a bi-Hölder equivalence of the Martin boundary with the visual boundary of $G$.

The boundary of a hyperbolic group $G$ is defined as follows (see, e.g. \cite{GhysHarpe:1990b}). A sequence $(g_n)$ is said to \emph{converge at infinity} if  $\lim_{m,n \to \infty} (g_n \cdot g_m)_p = \infty$ for  some $p\in G$, and we say that  $(g_n)$ and $(h_n)$ converge to the same limit at infinity if  $\lim_{n \to \infty} (g_n,h_n)_p = \infty$  for  some $p\in G$. The boundary $\partial G$ of $G$ is then defined as the set of equivalence classes of this relation, and, in particular, for a convergent sequence $(g_n)$, the limit is defined as its associated
equivalence class. Moreover, these definitions do not depend on the choice of $p$.

In order to define the visual metric on $\partial G$, for $\xi,\eta \in \partial G$, let
\[ (\xi \cdot \eta) := \sup\left\{ \liminf_{m,n \to \infty} (g_m \cdot h_n)_\mathbf{o} \;:\; g_n \to \xi, h_m\to \eta \right\}.\]
As shown in \cite{GhysHarpe:1990b}, if $G$ is $\delta$-hyperbolic, then $(\xi \cdot \eta) - 2\delta \leq \liminf_{m,n} (g_m \cdot h_n)_\mathbf{o} \leq
(\xi \cdot\eta)$, for any approximating sequences $(g_n)$ and $(h_m)$. Furthermore, for $\lambda_{\hbox{\tiny visual}} \in (\sqrt[2\delta]{1/2},1)$, it is shown in  \cite{GhysHarpe:1990b} that
\begin{equation} \label{eq:parameter_for_Gromov_boundary}  r(\xi,\eta) := \lambda_{\hbox{\tiny visual}}^{(\xi \cdot \eta) }, \;  d_{\hbox{\tiny visual}}(\xi,\eta) := \inf \left\{  \sum_{k=1}^{n-1} r(x_k,x_{k+1}) : n \in \N, x_k \in \partial G, x_1 = \xi, x_{n}=\eta \right\}, \Xi: \partial G \to \mathcal{M}_r
\end{equation}
defines a metric on $\partial G$ and that there exists $C\in (0,1)$ such that $ C r(\xi,\eta)   \leq d(\xi,\eta)\leq r(\xi,\eta)$
for all $\xi,\eta \in \partial G$. Moreover,  $\partial G$ is compact with respect to this metric. For this choice of $\lambda$, we refer to $d_{\hbox{\tiny visual}}$ as the \emph{visual metric} on $\partial G$.

The following definition is inspired by the classical construction of the Martin boundary as it gives rise to a continuous extension of the Green operators. For $h \in G$ and $r < R$, let
\begin{align*} \mathbb{K}_r(h,\cdot): \Sigma \times G \to \R,\;  (x,g) \mapsto  \frac{\mathbb{G}_r (\X_h)(x,g)}{ \mathbb{G}_r (\X_{\id})(x,g)}
\end{align*}
and note that $ \mathbb{K}_r(h,\cdot)$ is a bounded function by Lemma \ref{lem:bounded distortion for the extended operator} for each $h\in G$. Now assume that $(g_n)$ is a sequence in $G$. We refer to   $(g_n)$  as \emph{unbounded}, written as $|g_n| \to \infty$, if $(g_n)$ leaves any finite subset of $G$ infinitely often.
Furthermore, let
\begin{align*} M_r &:= \left\{ (x,g) \in \Sigma \times G : |\kappa^n(x)| \to \infty \hbox{ and } \lim_{n \to \infty}  \mathbb{K}_r(h,T^n(x,g))   \hbox{ exists for all } h \in G\right\}
\end{align*}
and $(x,g) \sim (\tilde{x},\tilde{g})$ if and only if
$\lim_{n \to \infty}  \mathbb{K}_r(h,T^n(x,g)) =  \lim_{n \to \infty} \mathbb{K}_r(h,T^n(\tilde{x},\tilde{g}))$ for all $h \in G$. In analogy to the theory known from random walks, we refer to
$\mathcal{M}_r := M_r/_\sim$ as the \emph{Martin boundary} of the group extension $(X,T)$.
As a consequence of Theorem \ref{theo:Ancona-Gouezel inequalities}, one obtains the following relation of $\partial G$ and $\mathcal{M}_r$.
\begin{proposition} \label{prop:Xi_is_well_defined}
  Assume that $G$ is hyperbolic, $T$ is a topologically transitive, transient  extension of a Gibbs-Markov map of finite type and that $\G_R(\X_g)(x,\id) \asymp \G_R(\X_{\id})(x,g)$, independent of  $(x,g)\in \Sigma \times G$. For $r \leq R$, the following holds.
\begin{enumerate}
\item For $(x,g)\in \Sigma \times G$ such that $(g\kappa^n(x))$ converges at infinity, we have that $(x,g) \in M_r$. Moreover, for $(\tilde{x},\tilde{g})\in \Sigma \times G$ such that $(g\kappa^n(x))$ and  $(\tilde{g}\kappa^n(\tilde{x}))$ converge to the same limit at infinity in the sense of Gromov, it follows that $(x,g) \sim (\tilde{x},\tilde{g})$.
\item For each sequence $(g_n)$ which converges at infinity, there exists  $x\in \Sigma $ such that $(\kappa^n(x))$ and  $(g_n)$ converge to the same limit at  infinity.
\end{enumerate}
\end{proposition}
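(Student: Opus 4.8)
For part (i) I would use the Gou\"ezel--Lalley estimate (the second part of Theorem \ref{theo:Ancona-Gouezel inequalities}), and for part (ii) topological transitivity of $T$. So fix $(x,g)$ with $\xi_n:=g\kappa^n(x)$ converging at infinity and fix $h\in G$; writing $T^n(x,g)=(\theta^n x,\xi_n)$, the aim is to show that $(\mathbb{K}_r(h,T^n(x,g)))_n$ is Cauchy (it is bounded by Lemma \ref{lem:bounded distortion for the extended operator}), hence convergent. To compare its $m$-th and $n$-th terms I apply the Gou\"ezel--Lalley inequality with group points $\xi_m,\xi_n$ playing the role of $g,g'$ in Figure \ref{ancona-configuration} and $h,\id$ the role of $h,h'$, discarding the $\Sigma$-coordinates since the estimate is uniform in them; the quantity it controls is precisely $\mathbb{K}_r(h,T^m(x,g))/\mathbb{K}_r(h,T^n(x,g))$. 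What has to be verified is that, for each prescribed $N$, these four points lie in a tree configuration with central segment of length $\geq N$ as soon as $m,n$ are large: since $(\xi_m\cdot\xi_n)_{\id}\to\infty$, in the thin triangle $\id,\xi_m,\xi_n$ the sides $[\id,\xi_m],[\id,\xi_n]$ (and also $[\xi_m,\xi_n]$) stay $O(\delta)$-close up to distance $(\xi_m\cdot\xi_n)_{\id}$ from $\id$, which eventually exceeds $d(\id,h)$; approximating $\{\xi_m,\xi_n,h,\id\}$ by a tree (Lemma \ref{lem:tree_approximation} with $k=1$) then separates $\{\xi_m,\xi_n\}$ from $\{h,\id\}$ by an edge of length at least $(\xi_m\cdot\xi_n)_{\id}-d(\id,h)-O(\delta)$. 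Choosing $N$ with $C\lambda^N<\varepsilon$ gives the Cauchy property; and since a sequence converging at infinity is unbounded, $|\kappa^n(x)|\to\infty$, so indeed $(x,g)\in M_r$. For the second assertion the same comparison, now with $\{\xi_m,\tilde g\kappa^n(\tilde x)\}$ on the $g$-side (both sequences converging to the same boundary point) and $\{h,\id\}$ on the $h$-side, yields $\mathbb{K}_r(h,T^m(x,g))/\mathbb{K}_r(h,T^n(\tilde x,\tilde g))\to1$; as both limits exist by the first part, they coincide (if one is $0$ so is the other), which is $(x,g)\sim(\tilde x,\tilde g)$.

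For part (ii), let $\sigma\in\partial G$ be the limit of $(g_n)$. Since the Cayley graph of $G$ is a proper geodesic $\delta$-hyperbolic space, $\sigma$ is the endpoint of a geodesic ray $\id=\mathfrak{r}(0),\mathfrak{r}(1),\dots$ with $s_i:=\mathfrak{r}(i)^{-1}\mathfrak{r}(i+1)\in\mathfrak{g}$; as $(\mathfrak{r}(i))$ and $(g_n)$ converge to the same limit, it is enough to produce $x\in\Sigma$ with $\kappa^n(x)\to\sigma$. Fix one atom $a_0$. Topological transitivity of $T$ supplies, for each of the finitely many generators $s\in\mathfrak{g}$, an admissible word $w_s$ beginning at $a_0$, ending with a letter that connects to $a_0$, and with $\kappa_{w_s}=s$; put $M:=\max_{s\in\mathfrak{g}}|w_s|$. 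Then $x:=w_{s_0}w_{s_1}w_{s_2}\cdots$ is a well-defined point of $\Sigma$, and by the cocycle property $\kappa^{m_i}(x)=s_0\cdots s_{i-1}=\mathfrak{r}(i)$ for $m_i:=|w_{s_0}\cdots w_{s_{i-1}}|$, so every $\kappa^n(x)$ lies within distance $M$ of some $\mathfrak{r}(i(n))$ with $i(n)\to\infty$. Since altering an argument of a Gromov product by a bounded amount alters the product by a bounded amount, a sequence at uniformly bounded distance from one converging to $\sigma$ also converges to $\sigma$; hence $\kappa^n(x)\to\sigma$, which is what was needed.

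I expect the main difficulty to be geometric bookkeeping rather than analysis, the latter being already encapsulated in Theorem \ref{theo:Ancona-Gouezel inequalities}: in part (i), checking that fixing $h$ while pushing $\xi_m,\xi_n$ toward $\sigma$ really does drive the branch point of $\{\xi_m,\xi_n\}$ past $h$, so that the central segment of the tree grows without bound; and in part (ii), extracting from topological transitivity of $T$ a \emph{uniform} finite family $\{w_s:s\in\mathfrak{g}\}$ of connecting words satisfying the admissibility constraints that make the concatenation legitimate --- which is the only point where the transitivity hypothesis is used.
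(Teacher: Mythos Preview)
Your argument is correct and follows the same route as the paper: part (i) is the Gou\"ezel--Lalley inequality applied to $\id,h,\xi_m,\xi_n$ together with $(\xi_m\cdot\xi_n)\to\infty$ to force the central segment to grow (the paper records this via the explicit identity $n=\tfrac12\bigl((\xi_m\cdot\xi_n)_{\id}+(\xi_m\cdot\xi_n)_h-d(h,\id)\bigr)$ rather than via Lemma \ref{lem:tree_approximation}, but these are equivalent computations), and part (ii) is transitivity of $T$ used to shadow a path to $\sigma$. Your version of (ii) is in fact more explicit than the paper's sketch, which merely asserts that one can build $x$ tracking the piecewise geodesic through the $(g_n)$; replacing this by a genuine geodesic ray and concatenating fixed words $w_s$ for each generator is a clean way to carry this out.
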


\begin{proof}
For the proof of (i), observe that, for $h \in G$ and $N$ sufficiently large,
the second part of Theorem \ref{theo:Ancona-Gouezel inequalities} is applicable to $\id$, $h$, $g\kappa^k(x)$ and $g\kappa^l(x)$, for $k,l \geq N$. As $n$ in the statement of the theorem can be written as
\begin{equation} \label{eq:estimate-as-gromov-product} n = \frac{1}2 \left((g\kappa^k(x) \cdot g\kappa^l(x))_{\id} + (g\kappa^k(x) \cdot g\kappa^l(x))_h - d(h,\id)\right),\end{equation}
with $(g \cdot \tilde{g})_h$ referring to the Gromov product with base $h$ and $d$ the word metric on $G$, it immediately follows that
$((g\kappa^k(x) \cdot g\kappa^l(x)) \to
\infty$ implies that $\log  \mathbb{K}_r(h,T^k(x,g))$ is a Cauchy sequence for all $r \leq R$. The second part follows by substituting $g\kappa^l(x)$ with $\tilde{g}\kappa^n(\tilde{x})$ in \eqref{eq:estimate-as-gromov-product}.

Assertion (ii) follows from the fact that the transitivity of $T$ allows to construct $x \in \Sigma$ such that $(\kappa^n(x))$ stays uniformly close to the piecewise geodesic arc with vertices $(g_n)$. It is then well known that $(\kappa^n(x))$ and  $(g_n)$ have the same limit.
\end{proof}

As an immediate corollary of the result, the  application
\[\Xi: \partial G \to \mathcal{M}_r, \; \eta \to \left\{(x,g) \in M_r: \lim_{n \to \infty} g\kappa^n(x) = \eta \right\}\big/_\sim \]
is well defined. In order to show that the map is invertible,
we apply ideias by Ancona and Shwartz in
\cite{Ancona:1987,Shwartz:2019a} to our setting  as follows. First observe that,
for $\sigma  \in  \mathcal{M}_r$ and $(x,g)$ in the equivalence class $\sigma$,
 \[ \mathbb{K}_r: G \times  \mathcal{M}_r \to \R,\quad  (h,\sigma) \mapsto  \lim_{n \to \infty}  \mathbb{K}_r(h,T^n(x,g))  \]
 is well defined and extends the definition of $\mathbb{K}_r$, but, in contrast to the setting of Markov chains, the function $h  \mapsto \mathbb{K}_r(h,\sigma)$ is not related to an $r$-harmonic function. However, by assuming transience,
the definition of $\mathbb{K}_r$ extends to $\cHL\times X$ for $r \leq R$ (see Prop. \ref{prop:finiteness of G_R}). In particular, as $h$ is identified with $\X_h$, a calculation shows that
\begin{align}\nonumber \label{eq:conformal} \mathbb{K}_r(\cL(\X_h),\sigma) & = \lim_n \mathbb{K}_r(\cL(\X_h),T^n(x,g)) = \lim_n \frac{1}{r}\left(\mathbb{K}_r(\X_h,T^n(x,g)) - \frac{\X_h(T^n(x,g))}{\G_r(\X_{\id},T^n(x,g))} \right) \\
 & = \frac{1}{r}\mathbb{K}_r(\X_h,\sigma),
\end{align}
where the last equality follows from the fact that $(g\kappa_n(x))$ leaves every finite subset of $G$. This identity implies that the canonical approach in here is to consider conformal and excessive measures as developed in the section on potential theory below. In here, we refer to a Radon measure $m$ as \emph{$\lambda$-excessive} if $\cL^\ast(m) \leq \lambda m$, and as  \emph{$\lambda$-conformal} if $\cL^\ast(m) =  \lambda m$. Moreover, a conformal measure $\mu$ is referred to as \emph{minimal} if any conformal measure $m$ with $m \leq \nu$ is a multiple of $\mu$.
The following theorem identifies $\partial G$ with minimal, conformal measures.

\begin{theorem}\label{theo:geometric-boundary} Assume that $G$ is hyperbolic, $T$ is a topologically transitive, transient  extension of a Gibbs-Markov map of finite type and that $\G_R(\X_g)(x,\id) \asymp \G_R(\X_{\id})(x,g)$, independent of  $(x,g)\in \Sigma \times G$ and that  $r \leq R$. Then the following holds.
\begin{enumerate}
\item If  $\sigma  \in  \mathcal{M}_r$ and $(x,g)$ is an element of the equivalence class $\sigma$ and $f \in \cHL$,  $f \geq 0$, then
\[ \mu_\sigma (f) :=  \lim_{n \to \infty} \mathbb{K}_r(f,T^n(x,g)),\]
always exists and defines a $1/r$-conformal, minimal  measure.
Moreover, any $1/r$-conformal, minimal  measure is obtained in this way.
\item For any conformal measure $\mu$, there exists a uniquely defined finite measure $\nu$ on $\partial G$ such that
$d\mu = d\mu_\sigma d\nu(\sigma)$, that is, for any $f \in \cHL$,
\[ \mu(f) =  \int_{\partial G} \mathbb{K}_r(f,\sigma) d\nu(\sigma).\]
\item If $\tilde{\sigma} \neq {\sigma}$, then $\lim_{\gamma \to \sigma}  \mu_\sigma(\X_{\gamma}) = \infty $ and $\lim_{\gamma \to \sigma}  \mu_{\tilde{\sigma}}(\X_{\gamma}) = 0$. In particular, $\mu_{\tilde{\sigma}} \neq \mu_{{\sigma}}$.
\item If $\tilde{\sigma} \neq {\sigma}$, then  $g \to \log  \mu_\sigma(\X_{g}) / \mu_{\tilde\sigma}(\X_{g})$ extends to a continuous function on $\overline{G} \setminus \{\tilde{\sigma},{\sigma}\}$. Furthermore, if  $g,h \in \overline{G}$ and $\sigma,\tilde{\sigma} \in \partial G$ are in configuration as in figure \ref{ancona-configuration}, then, with $C, \lambda$ as in  Theorem \ref{theo:Ancona-Gouezel inequalities},
\[ \left| \frac{\mu_\sigma(\X_{g})}{\mu_{\tilde{\sigma}}(\X_{g})} \cdot  \frac{\mu_{\tilde{\sigma}}(\X_{h})}{\mu_\sigma(\X_{h})} -1 \right| \leq C \lambda^n. \]
\item The Green operator $\G_r\left(\X_{g}\right)$ converges to $0$ uniformly and exponentially fast, that is
\[ \limsup_{n \to \infty} \max_{y \in \Sigma, |\gamma|=n} \sqrt[n]{\G_r\left(\X_{g}\right)\left(y, \gamma \right)} < 1.\]
\end{enumerate}
\end{theorem}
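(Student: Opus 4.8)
The plan is to prove the five assertions roughly in the order they are stated, using the two parts of Theorem \ref{theo:Ancona-Gouezel inequalities} and the potential theory of conformal/excessive measures from the appendix as the main engines. For part (i), I would first observe that for $f \in \cHL$ with $f \geq 0$ we have $0 \leq f \leq h$ and $D_\alpha(f) \leq h$ for some $h \in E_\rho$, so by Lemma \ref{lem:bounded distortion for the extended operator} the ratios $\mathbb{K}_r(f, T^n(x,g))$ are uniformly bounded; hence it suffices to prove convergence, and by linearity and density it suffices to treat $f = \X_h$. Convergence of $\mathbb{K}_r(\X_h, T^n(x,g))$ along $(x,g) \in M_r$ holds by definition of $M_r$, and the key point is that the limit $\mu_\sigma$ is a \emph{measure}, i.e.\ countably additive and Radon: this follows from the uniform bounds together with Fatou/dominated-convergence arguments controlled by $\G_r(h)$, and the conformality $\cL^\ast \mu_\sigma = (1/r)\mu_\sigma$ is exactly the computation already carried out in \eqref{eq:conformal}. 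Minimality of $\mu_\sigma$, and the converse that every minimal $1/r$-conformal measure arises this way, is the point where I would invoke the potential-theoretic machinery of Section \ref{sec:appendix}: one writes an arbitrary conformal $m$ as a superposition over the Martin boundary (this is part (ii), proved via a Choquet-type integral representation for the $1/r$-conformal measures in the excessive cone), and minimal elements must be the extreme points, which are precisely the $\mu_\sigma$.

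For part (ii), the plan is to run the standard Poisson–Martin representation adapted to $\sigma$-finite conformal measures: using that $\G_r$ maps $\cHL$ into itself and that $\Xi\colon \partial G \to \mathcal M_r$ is a continuous surjection (Proposition \ref{prop:Xi_is_well_defined} and the remark following it), one disintegrates a given conformal $\mu$ along the boundary map and identifies the fibre measures with the $\mu_\sigma$ using part (i); uniqueness of $\nu$ follows from part (iii), which separates the $\mu_\sigma$. Parts (iii) and (iv) are geometric consequences of Theorem \ref{theo:Ancona-Gouezel inequalities}(ii): writing $\mu_\sigma(\X_g) = \lim_n \mathbb{K}_r(\X_g, T^n(x,\eta_n))$ for $\eta_n \to \sigma$ and applying the second inequality of that theorem to the configuration $(\,\cdot\,, g, h, \sigma)$ gives the quantitative estimate $\bigl| \frac{\mu_\sigma(\X_g)}{\mu_{\tilde\sigma}(\X_g)} \cdot \frac{\mu_{\tilde\sigma}(\X_h)}{\mu_\sigma(\X_h)} - 1\bigr| \leq C\lambda^n$ directly; the continuous-extension claim in (iv) and the dichotomy $\mu_\sigma(\X_\gamma) \to \infty$ versus $\mu_{\tilde\sigma}(\X_\gamma) \to 0$ in (iii) then follow by letting $g \to \sigma$ along geodesics and using that $n$ (the size of the middle segment in Figure \ref{ancona-configuration}) tends to infinity, together with $\G_r(\X_\gamma)(y,\id) \asymp \G_r(\X_{\id})(y,\gamma) \to 0$.

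Part (v), the uniform exponential decay of $\G_r(\X_g)$, I would derive from (iii) as follows: the total mass estimate $\sum_{|\gamma| = n} \mathbb{H}_R(\X_\gamma, \X_{\id})(x,\id) \asymp \sum_{|\gamma|=n}(\G_R(\X_\gamma)(x,\id))^2$ is uniformly bounded (Lemma before Lemma \ref{lem:superexponential decay}), so $\G_R(\X_\gamma)(x,\id)$ is small for most $\gamma$ of length $n$; upgrading ``most'' to ``all, uniformly'' uses part (iii) — if some subsequence of maxima failed to decay, one could extract a boundary point $\sigma$ along which $\mu_\sigma(\X_\gamma) \not\to \infty$, contradicting (iii) — combined with Lemma \ref{lem:bounded distortion for the extended operator} to pass from $x = y$, $g = \id$ to arbitrary $y \in \Sigma$. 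The main obstacle I anticipate is part (i): making precise that the pointwise limits $\mu_\sigma$ are genuine Radon measures and, more seriously, proving minimality and the converse, since this is where one cannot simply cite the random-walk literature but must use the replacement of minimal harmonic functions by minimal conformal measures developed in the appendix — in particular verifying that the cone of $1/r$-excessive measures has a well-behaved Choquet boundary whose extreme rays are exactly $\{\mu_\sigma : \sigma \in \partial G\}$.
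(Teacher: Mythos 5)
There are two genuine circularities in your plan. The first concerns minimality in (i): you propose to deduce that the $\mu_\sigma$ are minimal, and that all minimal measures arise this way, from a Choquet-type representation (your version of (ii)) whose extreme rays you would then identify with $\{\mu_\sigma : \sigma \in \partial G\}$ — but identifying the extreme rays of the cone of $1/r$-conformal measures \emph{is} the assertion that the $\mu_\sigma$ are minimal and exhaust the minimal measures, so this is circular, and you yourself flag it as the unproved obstacle. The paper goes the other way: it first proves minimality directly, by combining the uniform Ancona inequality \eqref{eq: strong ancona inequality} with the reduced-measure machinery of the appendix (Theorems \ref{theo:domination} and \ref{theo:reduced_measure}) to obtain the comparison \eqref{eq:assintotica de mu vs nu 2}, namely $\mu([w,h]) \ll \bigl(\mu(\X_{\kappa^{n_k}(x)})/\nu(\X_{\kappa^{n_k}(x)})\bigr)\,\nu([w,h])$ for \emph{every} conformal measure $\nu$, and then runs an ess-inf/ess-sup argument on $d\nu/d\mu$ for a dominated measure $\nu \leq \mu$; only after that does it invoke Shwartz's representation results (Cor.\ 3.9 and Thm.\ 3.12 of \cite{Shwartz:2019a}) for the converse in (i) and for (ii). Without a comparison of this type you have no control over an arbitrary conformal measure dominated by $\mu_\sigma$, and "linearity and density" also does not by itself give existence of $\mu_\sigma(f)$ for all $f \in \cHL$: the paper constructs the limit as a weak accumulation point on compacta and then uses minimality to show the accumulation point is unique.

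The second circularity is between your (iii) and (v). Your derivation of the dichotomy in (iii) takes as input that $\G_r(\X_{\id})(y,\gamma) \to 0$ as $\gamma \to \infty$, but this decay is precisely (a weak form of) part (v), which you then propose to deduce from (iii). The decay is not automatic: transience and the uniform bound on $\sum_{|\gamma|=n}(\G_R(\X_\gamma)(x,\id))^2$ only show that \emph{most} terms on a sphere are small. The missing ingredient is the paper's Step 5: if $\mu_\sigma(\X_g) \asymp \mu_{\tilde\sigma}(\X_g)$ uniformly in $g$, then, after a conformality computation on cylinders, $\mu_\sigma$ is dominated by the excessive measure $R_{\X_h}(\mu_\sigma)+R_{\X_{\tilde h}}(\mu_{\tilde\sigma})$, which is a potential $\G_r^\ast(m)$, and the Riesz decomposition forces $\mu_\sigma=0$, a contradiction; this yields $\mu_\sigma \neq \mu_{\tilde\sigma}$, then the ratio dichotomy, then $\G_r(\X_g)(y,\gamma)\to 0$ (uniformly, by compactness of $\overline{G}$), and finally the exponential rate in (v) by a further application of the uniform Ancona inequality along geodesics. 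Your outline has no substitute for this potential-theoretic step, and even granting uniform smallness it does not explain where the exponential rate in (v) comes from. Parts (iv) and the quantitative estimate via Theorem \ref{theo:Ancona-Gouezel inequalities}(ii) are in line with the paper, but they do not repair the two gaps above.
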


\begin{proof} The strategy is as follows. We begin with the construction of an accumulation point of $\mu_n(f) := \mathbb{K}_r(f,T^n(x,g))$ with respect to a particular $x$ (Step 1) and then apply the Ancona-Gou{\"e}zel inequality in order to identify a region where the
the limit  is comparable to a reduced measure for a given conformal measure (Step 2).
We then conclude from this description that the accumulation point is minimal (Step 3) and therefore unique, which implies convergence for each $x$ in the equivalence class of $\sigma$ (Step 4). An application of the argument in Step 3 then allows to prove assertions (iii-v) of the theorem (Steps 5 and 6). In Step 7, it is then shown how to deduce the remaining assertion from the work of Shwartz in \cite{Shwartz:2019a}.

\medskip
\noindent\textsc{Step 1. Accumulation points}. Assume that $(x,\id)$ is as in Proposition \ref{prop:Xi_is_well_defined}, that is there exists a subsequence $(n_k)$ and $ {x}\in \Sigma$ such that $(\kappa^{n_k}({x})) \to \sigma$ and $\kappa^{n_k}({x})$ stays within a bounded distance to a geodesic half ray $[\id,\sigma]$.

In order to construct a limit measure, observe that $\mu_{n_k}(f)$
defines a measure for each $k\in \N$, and moreover, as $\lim_{n \to \infty} \mu_n(\X_h) = \mathbb{K}_r(\X_h,\sigma)$, for all $h \in G$. Hence, by compactness of $\Sigma \times \{h\}$ and a diagonal argument, there exists a further subsequence, also denoted by $(n_k)$, such that $\mu  := \lim_k \mu_{n_k}$ converges weakly on compact sets. Moreover, \eqref{eq:conformal} implies that  $\mu$ is conformal. In particular, it follows from  bounded distortion that, for each $w \in \cW^n$,
\begin{equation}\label{eq:comparability-of-boundary-measure} \mu([w,h]) \asymp \varphi_w r^{n} \mu (\X_{h\kappa^n(w)}) = \varphi_w r^{n} \lim_n \mathbb{K}_r(\X_h,T^n(x,g)) .\end{equation}

\medskip
\noindent\textsc{Step 2. Reduced measures and the Ancona inequality.}
Fix $g \in G$. By construction, $\kappa^{n_k}({x}) \to \sigma$ and $\kappa^{n_k}({x})$ stays within a bounded distance to the geodesic half ray $[\id,\sigma]$. Hence, there exists $K$ such that $\kappa^{n_k}({x})$ stays within a bounded distance to the geodesic half ray $[g,\sigma]$ for any $k \geq K$.
Hence, \eqref{eq: strong ancona inequality} of Theorem \ref{theo:Ancona-Gouezel inequalities} is applicable to $g,\kappa^{n_k}(x), \kappa^{n_l}(x)$ for $K<k<l$. This implies after dividing  by $\G_r(\X_{\id})(T^{n_l}(x,\id)$ and applying Lemma \ref{lem:bounded distortion for the extended operator} for any $y \in \Sigma$ that
\begin{align}
\nonumber \mu(\X_g) & = \lim_{l \to \infty }  \mathbb{K}_r\left(\X_g,T^{n_l}(x,g)\right) =  \lim_{l \to \infty } \frac{\G_r\left(\X_{g}\right)(T^{n_l}(x,\id) )}{\G_r(\X_{\id})(T^{n_l}(x,\id) )} \\
\nonumber & \asymp \lim_{l \to \infty } \frac{\G_r\left(\X_{\kappa^{n_k}(x)} \G_r(\X_{g})\right)(T^{n_l}(x,\id))}{\G_r\left(\X_{\id}\right)(T^{n_l}(x,\id) )}
 = \int  \X_{\kappa^{n_k}(x)} \G_r(\X_{g})  d\mu \\
\label{eq:ancona-for-conformal}& \asymp \G_r\left(\X_{g}\right)\left(y, \kappa^{n_k}(x)\right) \; \mu\left(\X_{\kappa^{n_k}(x)}\right).
\end{align}
Set $h= \kappa^{n_k}(x)$.  As $y$ is arbitrary, integrating with $\nu(\X_h)^{-1} \nu|_{\X_h}$ for some measure $\nu$ gives
\begin{align}\label{eq:assintotica de mu} \mu(\X_g) \asymp  \frac{\mu(\X_h)}{\nu(\X_h)} \int \G_r(\X_{g}) d\nu|_{\X_h}  = \frac{\mu(\X_h)}{\nu(\X_h)}  \; (\G_r^\ast(\nu|_{\X_h}))(\X_g).  \end{align}
Now assume that $a \in \cW^1$. By Theorem \ref{theo:domination}, $\G_r^\ast(\nu|_{[a,h]})$ is already reduced.
If, in addition, $\nu$ is a conformal measure, then Theorem \ref{theo:reduced_measure} implies that the reduced measure is obtained by applying the operator  $ \mathcal{F}_{[a,h]}^\ast$. Hence,
\begin{align*} \G_r^\ast(\nu|_{[a,h]})(\X_g)  & =  R_{[a,h]}(\G_r^\ast(\nu|_{[a,h]}))(\X_g) = \mathcal{F}_{[a,h]}^\ast \circ \G_r^\ast(\nu|_{[a,h]})(\X_g) \\
 & = \int \1_{[a,h]}  \G_r(\1_{[a,h]} \mathcal{F}_{[a,h]}(\X_g)) d\nu \\
& \leq \sup_{z \in [a,h]}  \G_r(\1_{[a,h]})(z)  \sup_{z \in [a,h]}  \mathcal{F}_{[a,h]}(\X_g)(z)  \nu([a,h]) \\
& \stackrel{(\dagger)}{\leq} C_\varphi \sup_{z \in [a,h]}  \G_r(\1_{[a,h]})(z) \int \mathcal{F}_{[a,h]}(\X_g) d\nu \\
& \stackrel{(\ddagger)}{=} C_\varphi \sup_{z \in [a,\id]}  \G_r(\1_{[a,\id]})(z)  \; \cdot \; R_{[a,h]}(\nu)(\X_g) \ll  R_{[a,h]}(\nu)(\X_g),
\end{align*}
where $(\dagger)$ follows from bounded distortion of $\varphi$ and $(\ddagger)$ from the
 fact that $\varphi (x,g)$ does not depend on the second coordinate.
In particular, by construction of $\mathcal{F}_{\X_h}$,
\begin{align*}  \G_r^\ast(\nu|_{\X_h})(\X_g) & = \sum_{a \in \cW^1}  \G_r^\ast(\nu|_{[a,h]})(\X_g)
\ll \sum_{a \in \cW^1} \mathcal{F}_{[a,h]}^\ast(\nu)(\X_g) \leq \mathcal{F}_{\X_h}^\ast(\nu)(\X_g) = R_{\X_h}(\nu)(\X_g)
 .\end{align*}
Hence, $R_{\X_h}(\nu)(\X_g) \asymp \G_r^\ast(\nu|_{\X_h})(\X_g)$. Combining the estimate with \eqref{eq:assintotica de mu} then implies that
\begin{align}\label{eq:assintotica de mu vs nu} R_{\X_h}(\mu)(\X_g) \asymp \mu(\X_g)  \asymp \frac{\mu(\X_h)}{\nu(\X_h)} R_{\X_h}(\nu)(\X_g) \leq  \frac{\mu(\X_h)}{\nu(\X_h)} \nu(\X_g) ,\end{align}
provided that $h$ is sufficiently close to $[g,\sigma)$. Now assume that $w \in \cW^n$ for some $n \in \N$ and $g \in G$. It follows from conformality as in \ref{eq:comparability-of-boundary-measure} that $\nu([w,g]) \asymp \varphi_w r^{-n} \nu(\X_{g\kappa^n(w)})$. However, by the choice of $x$, there exists $K(g,w) \in \N$ such that $h= \kappa^{n_k}(x)$ is  sufficiently close to $g\kappa^n(w),\sigma)$ for all $k \geq K(g,w)$. This proves that
\begin{align}\label{eq:assintotica de mu vs nu 2} \mu([w,h]) \ll \frac{\mu(\X_{\kappa^{n_k}(x)})}{\nu(\X_{\kappa^{n_k}(x)})}  \nu([w,h]) \quad \forall  k \geq K(g,w).\end{align}

\medskip
\noindent\textsc{Step 3. Minimality.} Assume that $\nu \leq \mu$. In order to show that $\nu$ is proportional to $\mu$, set  $b := \hbox{ess~inf } {d\nu}/{d\mu}$ and $a := \hbox{ess~sup } {d(\nu - b\mu)}/{d\mu}$. If $a =0$, then $\nu = b\mu$ and there is nothing left to show.  If $a> 0$, consider ${\nu}_1 := a^{-1} (\nu - b\mu)$. Then
\begin{align} \label{eq:ess-inf of nu} \hbox{ess~inf } \frac{d{\nu}_1}{d\mu} = a \left( \hbox{ess~inf } \frac{d\nu}{d\mu} - b\right) =0, \quad
\hbox{ess~sup } \frac{d{\nu}_1}{d\mu} = a^{-1} \left( \hbox{ess~sup } \frac{d\nu}{d\mu} - b\right) =1.
\end{align}
Moreover, it follows from construction that $\nu_2 := \mu - \nu_1$ has the same property.
Hence, for each $\epsilon  > 0$, there exists $A$ of positive measure such that $\nu_2(A) < \epsilon \mu(A)$.
Through approximation by cylinder sets, we may suppose without loss of generality that $A$ is a cylinder set.
It follows from  \eqref{eq:assintotica de mu vs nu 2} for $k$ sufficiently large that
\[  \epsilon > \frac{\nu_2(A)}{\mu(A)} \asymp \frac{\nu_2(\X_{\kappa^{n_k}(x)})}{\mu(\X_{\kappa^{n_k}(x)})}.\]
Hence, as $\epsilon$ is arbitrary,
\[1 \geq  \liminf_{k\to \infty} \frac{\nu_1\left(\X_{\kappa^{n_k}(x)}\right)}{\mu\left(\X_{\kappa^{n_k}(x)}\right)} =
1 - \limsup_{k\to \infty} \frac{\nu_2\left(\X_{\kappa^{n_k}(x)}\right)}{\mu\left(\X_{\kappa^{n_k}(x)}\right)} = 1.
\]
Equation \eqref{eq:assintotica de mu vs nu 2} now implies that $\mu \ll \nu_1$, contradicting \eqref{eq:ess-inf of nu}. Hence $a=0$ and $\nu = b\mu$.

\medskip
\noindent\textsc{Step 4. Existence of the limit.} Assume that $\tilde{\mu}$ is given by a converging subsequence with respect to some arbitrary $(x,g)$ in the equivalence class of $\sigma$.
It follows from \eqref{eq:comparability-of-boundary-measure} that  $d\tilde{\mu}/d\mu \leq C$ for some $C>0$.Then $C^{-1}\tilde{\mu} \leq \mu$. It follows from minimality that  ${\mu}$ and $\tilde{\mu}$ are colinear. As $\mu(\X_g) = \tilde{\mu} (\X_g)$, it follows that $\tilde{\mu} = {\mu}$.

\medskip
\noindent\textsc{Step 5. Unicity.} Assume that $\mu_\sigma(\X_g) \asymp \mu_{\tilde{\sigma}}(\X_g)$ with respect to a constant which is independent from $g \in G$. Furthermore, assume that $h$ and $\tilde{h}$ are elements of the geodesic from $\sigma$ to $\tilde{\sigma})$. By choosing $h$ and $\tilde{h}$ sufficiently distant from each other, it follows that for each $g \in G$, either $h$ is sufficiently close to $[g,\sigma)$  or $\tilde{h}$ is sufficiently close to $[g,\tilde{\sigma})$. Hence, by \eqref{eq:assintotica de mu vs nu} applied simultaneously to  $\mu_\sigma$ and $\mu_{\tilde{\sigma}}$,
\begin{align}\label{eq:upper bound for boundary measure} \mu_\sigma(\X_g) \asymp \mu_{\tilde{\sigma}}(\X_g) \ll R_{\X_h}(\mu_\sigma)(\X_g) + R_{\X_{\tilde{h}}}(\mu_{\tilde{\sigma}})(\X_g) =: \nu(\X_g).\end{align}
 Moreover,
as $\nu$ is excessive but not conformal, it follows for $w \in \cW^n$ that
\begin{align*}
\nu([w,g])  = \int \1_{[w,g]}d\nu \geq r^n \int \1_{[w,g]}d(\cL^n)^\ast(\nu) =  r^n \int \cL^n(\1_{[w,g]})d\nu
 \asymp r^n  \varphi_w \nu(T^n([w,g])).
\end{align*}
By repeating the argument for a finite collections of disjoint words $(u_i)$ contained in $\theta^n([w])$ such that $\kappa^{|u_i|}(u_i) = \id$ and $\bigcup_i \theta^{|u_i|}([u_i]) = \Sigma$,
\begin{align*}
\nu([w,g]) & \gg r^n  \varphi_w \nu(T^n([w,g]))   \geq r^n  \varphi_w \sum_i \nu([u_i,g\kappa^n(w)])\\
& \gg r^n  \varphi_w \sum_i r^{|u_i|}  \varphi_{u_i} \nu(\X_{g\kappa^n(w)}) \gg r^n  \varphi_w \nu(\X_{g\kappa^n(w)}).
\end{align*}
Hence, by combining this estimate with \eqref{eq:comparability-of-boundary-measure} and \eqref{eq:upper bound for boundary measure}, there exists $C>0$ such that
$ \mu_\sigma([w,g]) \leq  C\nu([w,g])$ for all $w \in \cW^n$, $n \in \N$ and $g \in G$. Hence, $ \mu_\sigma \leq C \nu$. However,
as $\nu$ is a potential, that is, it can be written as $\nu = \G_r^\ast(m)$, the Riesz decomposition implies that $\mu_\sigma = 0$, which is a contradiction.

\medskip
\noindent\textsc{Step 6. Limits of ${\mu_\sigma(\X_g)}$, $\G_r(\X_g)$ and  ${\mu_\sigma(\X_g)}/{\mu_{\tilde{\sigma}}(\X_g)}$.} As above, assume that $\sigma$ and $\tilde{\sigma}$ are in $\partial G$, and that $h$ and $\tilde{h}$ are elements of a geodesic from $\sigma$ to $\tilde{\sigma}$ passes first through $h$ and then through $\tilde{h}$. Then, by \eqref{eq:assintotica de mu vs nu},
\begin{align*}
 \frac{\mu_\sigma(\X_{\tilde{h}})}{\mu_{\tilde{\sigma}}(\X_{\tilde{h}})} \ll  \frac{\mu_\sigma(\X_g)}{\mu_{\tilde{\sigma}}(\X_g)} \ll
 \frac{\mu_\sigma(\X_{{h}})}{\mu_{\tilde{\sigma}}(\X_{{h}})},
\end{align*}
for all $g$ such that the geodesic rays $[g,\sigma)$ and  $[g,\tilde\sigma)$ are sufficiently close to $h$ and $\tilde{h}$, respectively. This is illustrated in figure \ref{fig:bi-estimate} for the case of $G$ acting isometrically on the Poincaré disc. In there, the grey part stands for the possible locations of $g$.
\begin{figure}[h] 
   \centering
      \def\svgwidth{0.6\textwidth}
\begingroup%
  \makeatletter%
  \providecommand\color[2][]{%
    \errmessage{(Inkscape) Color is used for the text in Inkscape, but the package 'color.sty' is not loaded}%
    \renewcommand\color[2][]{}%
  }%
  \providecommand\transparent[1]{%
    \errmessage{(Inkscape) Transparency is used (non-zero) for the text in Inkscape, but the package 'transparent.sty' is not loaded}%
    \renewcommand\transparent[1]{}%
  }%
  \providecommand\rotatebox[2]{#2}%
  \newcommand*\fsize{\dimexpr\f@size pt\relax}%
  \newcommand*\lineheight[1]{\fontsize{\fsize}{#1\fsize}\selectfont}%
  \ifx\svgwidth\undefined%
    \setlength{\unitlength}{547.18310061bp}%
    \ifx\svgscale\undefined%
      \relax%
    \else%
      \setlength{\unitlength}{\unitlength * \real{\svgscale}}%
    \fi%
  \else%
    \setlength{\unitlength}{\svgwidth}%
  \fi%
  \global\let\svgwidth\undefined%
  \global\let\svgscale\undefined%
  \makeatother%
  \begin{picture}(1,0.69766121)%
    \lineheight{1}%
    \setlength\tabcolsep{0pt}%
    \put(0,0){\includegraphics[width=\unitlength,page=1]{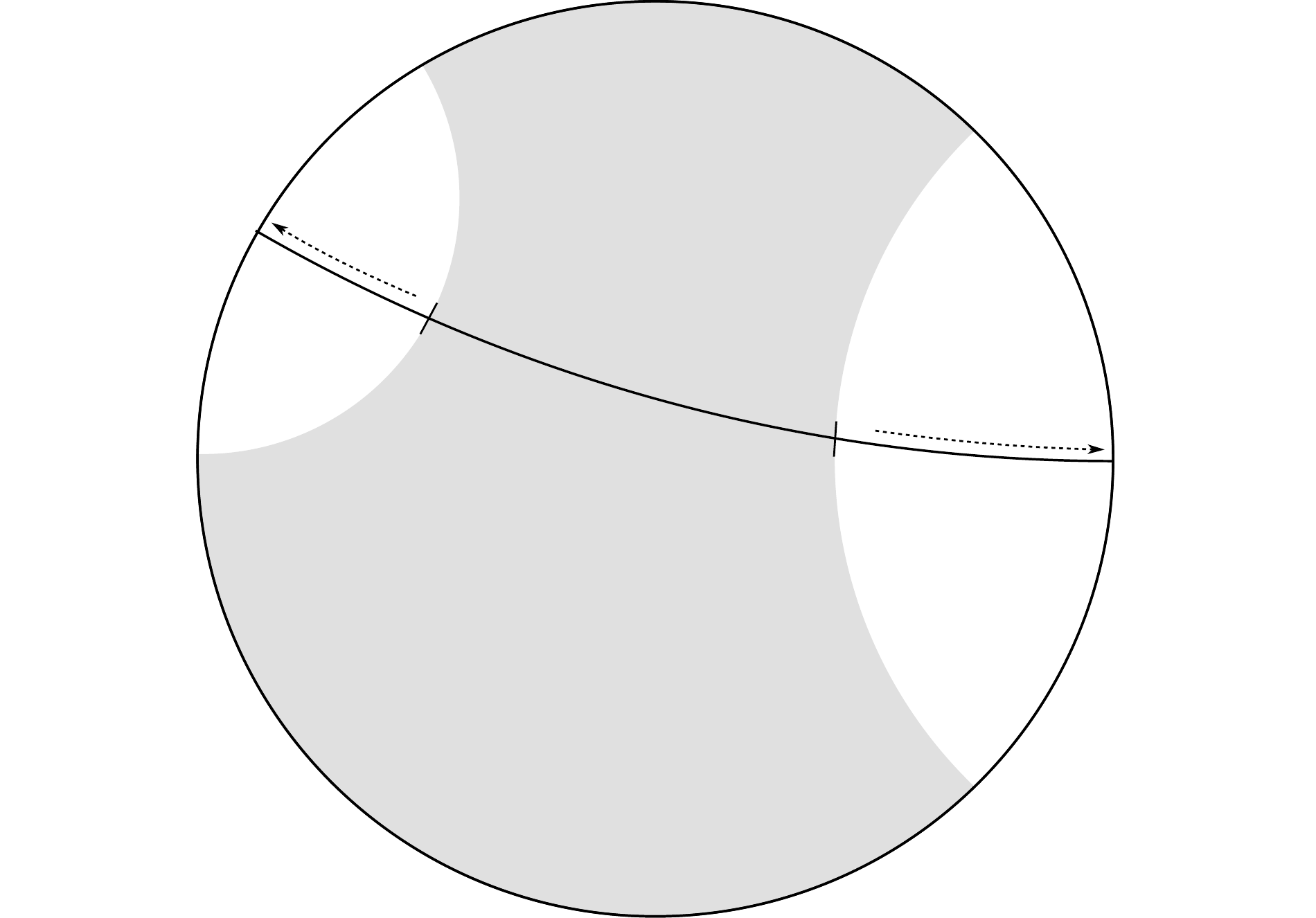}}%
    \put(0.15347495,0.52190663){\color[rgb]{0,0,0}\makebox(0,0)[lt]{\lineheight{1.25}\smash{\begin{tabular}[t]{l}$\tilde{\sigma}$\end{tabular}}}}%
    \put(0.24227803,0.52190663){\color[rgb]{0,0,0}\makebox(0,0)[lt]{\lineheight{1.25}\smash{\begin{tabular}[t]{l}$\tilde{\gamma}$\end{tabular}}}}%
    \put(0.27702709,0.42731202){\color[rgb]{0,0,0}\makebox(0,0)[lt]{\lineheight{1.25}\smash{\begin{tabular}[t]{l}$\tilde{h}$\end{tabular}}}}%
    \put(0.85776069,0.3383545){\color[rgb]{0,0,0}\makebox(0,0)[lt]{\lineheight{1.25}\smash{\begin{tabular}[t]{l}${\sigma}$\end{tabular}}}}%
    \put(0.78957536,0.38194521){\color[rgb]{0,0,0}\makebox(0,0)[lt]{\lineheight{1.25}\smash{\begin{tabular}[t]{l}${\gamma}$\end{tabular}}}}%
    \put(0.65154446,0.31727343){\color[rgb]{0,0,0}\makebox(0,0)[lt]{\lineheight{1.25}\smash{\begin{tabular}[t]{l}${h}$\end{tabular}}}}%
    \put(0.3735522,0.20916532){\color[rgb]{0,0,0}\makebox(0,0)[lt]{\lineheight{1.25}\smash{\begin{tabular}[t]{l}${g}$\end{tabular}}}}%
    \put(0,0){\includegraphics[width=\unitlength,page=2]{estimate-measures-tex.pdf}}%
  \end{picture}%
\endgroup%
   \caption{The positions of $\sigma,\tilde{\sigma}$ and $h,\tilde{h}$. }
   \label{fig:bi-estimate}
\end{figure}
Moreover,
 for $\gamma$ such that $[\gamma,\tilde\sigma)$ passes sufficiently close to ${h}$, the same argument shows that
${\mu_\sigma(\X_{g})}/{\mu_{\tilde{\sigma}}(\X_{g})} \gg   {\mu_{\sigma}(\X_{h})}/{\mu_{\tilde{\sigma}}(\X_{h})}$.
As $\mu_\sigma(\X_{\gamma})  \asymp \mu_{\tilde{\sigma}}(\X_{\gamma})$ for $\gamma$ in a subsequence converging to $\sigma$  would imply that $\mu_\sigma =  \mu_{\tilde{\sigma}}$, it follows
that $\lim_{\gamma \to {\sigma} } {\mu_\sigma(\X_{\gamma})}/{\mu_{\tilde{\sigma}}(\X_{\gamma})}= \infty$. By repeating the argument for $\tilde\gamma \to \tilde\sigma$, one obtains that
\begin{align*}\label{eq:comparability of boundary measures}
 0 \xleftarrow{\tilde\gamma \to \tilde\sigma}  \frac{\mu_\sigma(\X_{{\tilde\gamma}})}{\mu_{\tilde{\sigma}}(\X_{{\tilde\gamma}})}  \ll  \frac{\mu_\sigma(\X_{\tilde{h}})}{\mu_{\tilde{\sigma}}(\X_{\tilde{h}})} \ll  \frac{\mu_\sigma(\X_g)}{\mu_{\tilde{\sigma}}(\X_g)} \ll
 \frac{\mu_\sigma(\X_{{h}})}{\mu_{\tilde{\sigma}}(\X_{{h}})} \ll \frac{\mu_\sigma(\X_{{\gamma}})}{\mu_{\tilde{\sigma}}(\X_{{\gamma}})} \xrightarrow{\gamma \to \sigma} \infty.
\end{align*}
If, in addition, $g$ is an element of a geodesic from $\sigma$ to $\tilde{\sigma}$, then \eqref{eq:ancona-for-conformal} and symmetry imply that
\[ \frac{\mu_\sigma(\X_{{\gamma}})}{\mu_{\tilde{\sigma}}(\X_{{\gamma}})}
\asymp  \frac{ \G_r\left(\X_{g}\right)\left(y, \gamma \right)^{-1} \mu_\sigma(\X_{{g}})}{ \G_r\left(\X_{\gamma}\right)\left(y, g \right) \mu_{\tilde{\sigma}}(\X_{{g}})}   \asymp
  \G_r\left(\X_{g}\right)\left(y, \gamma \right)^{-2} \frac{ \mu_\sigma(\X_{{g}})}{  \mu_{\tilde{\sigma}}(\X_{{g}})} .  \]
Hence, $\G_r\left(\X_{g}\right)\left(y, \gamma \right) \to 0$ as ${\gamma \to \sigma}$ and, by compactness of $\overline{G}$, $\G_r\left(\X_{g}\right)\left(y, \gamma \right) \to 0$ uniformly as $|\gamma| \to \infty$. Therefore, a further application of part (i) of Theorem \ref{theo:Ancona-Gouezel inequalities} implies that the convergence is exponential, that is
\[ \limsup_{n \to \infty} \max_{y \in \Sigma, |\gamma|=n} \sqrt[n]{\G_r\left(\X_{g}\right)\left(y, \gamma \right)} < 1.\]
Furthermore, it follows from \eqref{eq:ancona-for-conformal} that $\lim_{\gamma \to \sigma}  \mu_\sigma(\X_{\gamma}) = \infty $ and $\lim_{\gamma \to \sigma}  \mu_{\tilde{\sigma}}(\X_{\gamma}) = 0$ if $\tilde{\sigma} \neq {\sigma}$.
In order to analyse the behaviour of ${ \mu_\sigma(\X_{{g}})}/{  \mu_{\tilde{\sigma}}(\X_{{g}})}$ for $g$ distant from  the geodesic from $\sigma$ to $\tilde{\sigma}$
fix  $g_n \to g_\infty \in \partial G \setminus \{\sigma,\tilde{\sigma}\}$. Then part (ii) of Theorem \ref{theo:Ancona-Gouezel inequalities} implies as in Proposition \ref{prop:Xi_is_well_defined} that $ \log ({\mu_\sigma(\X_{g_n})}/{\mu_{\tilde{\sigma}}(\X_{g_n})})$ is a Cauchy sequence and that the function $g \to \log ({\mu_\sigma(\X_{g})}/{\mu_{\tilde{\sigma}}(\X_{g})})$ extends continuously to $ \overline{G} \setminus \left\{\sigma,\tilde{\sigma}\right\}$. The remaining assertion is an immediate corollary of part (ii) of Theorem \ref{theo:Ancona-Gouezel inequalities}.

\medskip
\noindent\textsc{Step 7. Integral representation} Now assume that $\mu$ is a minimal conformal measure. It then follows from
 Corollary 3.9 in \cite{Shwartz:2019a} that $\mu$ can be represented by $\int f d\mu = c \mathbb{K}_r(f,\omega)$, for some $c > 0$ and  $\omega \in \mathcal{M}_r$. Now let $(x,g)$ be such that $T^n(x,g) \xrightarrow{n\to \infty} \omega$  in $\mathcal{M}_r$. Moreover, let $\sigma \in \partial G$ be an accumulation point of $(g\kappa_n(x))$. Then, by the first assertion in (i), $\mu = c \mu_\sigma$. In particular,  $\partial G$ can be identified with the set of minimal conformal measures, which proves the second assertion of (i). The representation of arbitrary conformal measures then is a corollary of Theorem 3.12 in \cite{Shwartz:2019a}.
\end{proof}

Now assume that $z \in M_r$ and define $m_z(\X_h) := \lim_{n\to \infty} \mathbb{K}_r(h,T^n(z))$. It now follows as in the above proof of (i) of Theorem \ref{theo:geometric-boundary} that $m_z$ extends uniquely to a minimal conformal measure. Therefore, there exists a unique  $\sigma \in \partial G$ such that  $\mu_\sigma = m_z$, which proves the following.
\begin{corollary} The map $\Xi: \partial G \to \mathcal{M}_r$ is a bijection.
\end{corollary}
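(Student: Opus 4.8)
The plan is to exhibit an explicit two‑sided inverse of $\Xi$, using only Theorem \ref{theo:geometric-boundary}, Proposition \ref{prop:Xi_is_well_defined} and the discussion preceding the statement; no new estimate is required. I would split the argument into injectivity and surjectivity.

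\emph{Injectivity.} Suppose $\sigma,\tilde\sigma\in\partial G$ with $\Xi(\sigma)=\Xi(\tilde\sigma)$ as points of $\mathcal{M}_r$. Choose $(x,g)$ with $g\kappa^n(x)\to\sigma$ and $(\tilde x,\tilde g)$ with $\tilde g\kappa^n(\tilde x)\to\tilde\sigma$; by Proposition \ref{prop:Xi_is_well_defined} both lie in $M_r$ and represent $\Xi(\sigma)$, resp. $\Xi(\tilde\sigma)$, so $(x,g)\sim(\tilde x,\tilde g)$, i.e. $\lim_n\mathbb{K}_r(h,T^n(x,g))=\lim_n\mathbb{K}_r(h,T^n(\tilde x,\tilde g))$ for every $h\in G$. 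By part (i) of Theorem \ref{theo:geometric-boundary} these two limits equal $\mu_\sigma(\X_h)$ and $\mu_{\tilde\sigma}(\X_h)$. If $\sigma\neq\tilde\sigma$, part (iii) gives $\lim_{\gamma\to\sigma}\mu_\sigma(\X_\gamma)=\infty$ while $\lim_{\gamma\to\sigma}\mu_{\tilde\sigma}(\X_\gamma)=0$, which is incompatible with $\mu_\sigma(\X_h)=\mu_{\tilde\sigma}(\X_h)$ for all $h$. Hence $\sigma=\tilde\sigma$.

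\emph{Surjectivity.} Fix a point of $\mathcal{M}_r$ with representative $z=(x,g)\in M_r$ and set $m_z(\X_h):=\lim_n\mathbb{K}_r(h,T^n(z))$, which exists since $z\in M_r$. The essential point is that $m_z$ extends to a minimal $1/r$‑conformal measure; this is the content of the paragraph preceding the statement, obtained by repeating Steps 1--4 of the proof of Theorem \ref{theo:geometric-boundary} with $z$ in place of the specially chosen starting point. Concretely, the measures $\mu_n:=\mathbb{K}_r(\,\cdot\,,T^n(z))$ admit, by a diagonal/compactness argument, a weak‑$\ast$ (on compact sets) accumulation point which is $1/r$‑conformal by \eqref{eq:conformal} and agrees with $m_z$ on all $\X_h$; since $|\kappa^n(x)|\to\infty$, the sequence $g\kappa^n(x)$ accumulates at some $\sigma\in\partial G$, and the minimality and comparability arguments of Steps 2--4 (cf. \eqref{eq:comparability-of-boundary-measure}) identify any such accumulation measure with $\mu_\sigma$. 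As in the injectivity step, part (iii) of Theorem \ref{theo:geometric-boundary} then forces $g\kappa^n(x)$ to have a single accumulation point $\sigma$, whence $m_z=\mu_\sigma$, so $(x,g)$ represents $\Xi(\sigma)$ and $[z]=\Xi(\sigma)$.

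Combining the two parts, $\Xi$ is a bijection, with inverse $[z]\mapsto$ the unique $\sigma$ satisfying $m_z=\mu_\sigma$. I expect essentially all of the difficulty to sit in the surjectivity step, and within it in the claim that $m_z$ — a priori only prescribed on the indicators $\X_h$ — determines a single minimal conformal measure; the crux is that $g\kappa^n(x)$ cannot have two distinct limit points in $\partial G$, and this is exactly where part (iii) of Theorem \ref{theo:geometric-boundary} enters, rather than any further analytic input. Alternatively one may invoke Corollary 3.9 of \cite{Shwartz:2019a} to represent $m_z$ by a point of $\mathcal{M}_r$ and then apply part (i) of Theorem \ref{theo:geometric-boundary}, which shortens the write‑up but carries the same content.
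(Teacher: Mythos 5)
Your proposal is correct and follows essentially the same route as the paper: the paper's proof is exactly the paragraph preceding the corollary, namely that for any $z\in M_r$ the limits $m_z(\X_h)$ extend, as in the proof of part (i) of Theorem \ref{theo:geometric-boundary}, to a minimal $1/r$-conformal measure, which then equals $\mu_\sigma$ for a unique $\sigma\in\partial G$. Your explicit injectivity argument via part (iii) and the alternative via Corollary 3.9 of \cite{Shwartz:2019a} only spell out what the paper's ``unique $\sigma$'' and its Step 7 already use, so there is no substantive difference.
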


In order to analyse the topological properties of $\Xi$, we start with the construction of a metric which is compatible with the Martin compactification of $\Sigma \times G$. However, in order to obtain Hölder continuity, it will turn out that we have to modify the classical definition slightly by taking logarithms as follows. By Lemma \ref{lem:bounded distortion for the extended operator},  $\|\log \mathbb{K}_r(h,\cdot)\|_\infty < \infty$. Hence, there exists $\{c_h > 0: h \in G\}$
such that
\[ \Delta_r((x,g), (\tilde x,\tilde g)) := \sum_{h \in G} c_h \left|\log \mathbb{K}_r(h,(x,g)) - \log \mathbb{K}_r(h,(\tilde x,\tilde g))\right|  \ll 1 \]
for all $(x,g), (\tilde x,\tilde g) \in \Sigma\times G$. Furthermore, if $(\tilde x,\tilde g) \in M_r$ then
\[d^r_{\hbox{\tiny Martin}} ((x,g), (\tilde x,\tilde g)) :=
\lim_{n \to \infty} \Delta_r(T^n(x,g), T^n(\tilde x,\tilde g)), \quad \hbox{ for } (x,g), (\tilde x,\tilde g) \in M_r
\]
is well defined and, as it easily can be shown, defines a metric on $\mathcal{M}_r$. It is worth noting that it follows from general topology that
$\Sigma \times G \cup \mathcal{M}_r$ is the unique compactification of $\Sigma \times G$ such that each $\mathbb{K}_r(h,\cdot)$ extends to a continuous function, and that $d^r_{\hbox{\tiny Martin}}$ is a metric for this topology (see, e.g., \cite{Woess:2009}). In particular,
 this topology is independent from the parameters $\{c_h\}$ and from considering $\log \mathbb{K}_r$ instead of $\mathbb{K}_r$. However, Theorems \ref{theo:Ancona-Gouezel inequalities} and  \ref{theo:geometric-boundary} allow to obtain precise estimates for $|\log \mathbb{K}_r(h,\cdot)|$ and $|\log \mathbb{K}_r(h,\cdot) - \log \mathbb{K}_r(h,\cdot\cdot)|$ which gives rise to the following definition of $c_h$, for $h \in G$ and $\lambda$ as in Theorem \ref{theo:Ancona-Gouezel inequalities}.
\begin{equation} \label{def:coefficients}
 c_h := \frac{{\lambda}^{2|h|}}{ \#\left\{ g \in G: |g| = |h|\right\} \left|\log \mathbb{G}_r(\X_{h})(x,\id))\right| }.
 \end{equation}
 Furthermore, observe that the sequence $(\log \#\left\{ g \in G: |g| = n \right\})$ is sub-additive, which implies that the following exponential growth rate $\mathfrak{h}$ exists.
 \[ \mathfrak{h} := \lim_{n \to \infty} \sqrt[n]{\#\left\{ g \in G: |g| = n \right\}}.\]

\begin{theorem} \label{theo:continuidade} Assume that $G$ is hyperbolic, $T$ is a topologically transitive extension of a Gibbs-Markov map of finite type and that $\G_R(\X_g)(x,\id) \asymp \G_R(\X_{\id})(x,g)$, independent of  $(x,g)\in \Sigma \times G$ and that  $r \leq R$.
Then, the map  $\Xi: (\partial G, d_{\hbox{\tiny visual}} ) \to (\mathcal{M}_r, d^r_{\hbox{\tiny Martin}})$ is a homeomorphism and, if $d^r_{\hbox{\tiny Martin}}$ is defined through $(c_h)$ as in \eqref{def:coefficients}, then
\[  d_{\hbox{\tiny visual}}(\sigma,\tilde \sigma)^\beta \ll d^r_{\hbox{\tiny Martin}}(\Xi(\sigma),\Xi(\tilde \sigma))  \ll d_{\hbox{\tiny visual}}(\sigma,\tilde \sigma)^\alpha , \]
for $\alpha = \log \lambda/\log \lambda_{\hbox{\tiny visual}}$, $\beta = (2\log \lambda - \log(\mathfrak{h}+ \epsilon))/\log \lambda_{\hbox{\tiny visual}}$ and an arbitrary $\epsilon > 0$. In particular,  $\Xi$ and $\Xi^{-1}$ are Hölder continuous with exponents $\alpha$ and $1/\beta$, respectively.
\end{theorem}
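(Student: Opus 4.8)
The plan is to use the bijectivity of $\Xi$ (the corollary above) together with a two-sided comparison of $d^r_{\hbox{\tiny Martin}}(\Xi(\sigma),\Xi(\tilde\sigma))$ with the Gromov product $(\sigma\cdot\tilde\sigma)_{\id}$, which by \eqref{eq:parameter_for_Gromov_boundary} satisfies $d_{\hbox{\tiny visual}}(\sigma,\tilde\sigma)\asymp\lambda_{\hbox{\tiny visual}}^{(\sigma\cdot\tilde\sigma)_{\id}}$. The starting point is the identity $d^r_{\hbox{\tiny Martin}}(\Xi(\sigma),\Xi(\tilde\sigma))=\sum_{h\in G}c_h\,|\log\mu_\sigma(\X_h)-\log\mu_{\tilde\sigma}(\X_h)|$, obtained by passing to the limit in the definition of $d^r_{\hbox{\tiny Martin}}$, using $\mathbb{K}_r(h,\Xi(\sigma))=\mu_\sigma(\X_h)$ and $\mu_\sigma(\X_{\id})=1$ from Theorem~\ref{theo:geometric-boundary}(i) and dominating the sum by $2\sum_h c_h\|\log\mathbb{K}_r(h,\cdot)\|_\infty<\infty$. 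I would then record three uniform (in $\sigma,\tilde\sigma,h$) estimates. (a) A Harnack bound $|\log\mu_\sigma(\X_h)|\le c_0|h|$, obtained by iterating the conformality $\cL^\ast\mu_\sigma=r^{-1}\mu_\sigma$ and topological transitivity of $T$ to compare $\mu_\sigma(\X_h)$ with $\mu_\sigma(\X_{h'})$ at uniformly bounded multiplicative cost when $d(h,h')\le1$. (b) $c_1|h|-c_2\le|\log\G_r(\X_h)(x,\id)|\le c_3(|h|+1)$, the lower estimate from the exponential decay in Theorem~\ref{theo:geometric-boundary}(v) and the upper one from $\G_r(\X_h)(x,\id)\gg p^{|h|}$ (one admissible path), so that $c_h\asymp\lambda^{2|h|}\big(\#\{|g|=|h|\}\,|h|\big)^{-1}$ for large $|h|$. (c) A quantitative form of Theorem~\ref{theo:geometric-boundary}(iv): writing the depth of the configuration $\{\id,h,\sigma,\tilde\sigma\}$ via \eqref{eq:estimate-as-gromov-product} as $n=\tfrac12\big((\sigma\cdot\tilde\sigma)_{\id}+(\sigma\cdot\tilde\sigma)_h-|h|\big)\ge(\sigma\cdot\tilde\sigma)_{\id}-|h|$ (up to an additive constant, since the Gromov product is $1$-Lipschitz in the basepoint), part (iv) with $g=\id$ gives $|\log(\mu_\sigma(\X_h)/\mu_{\tilde\sigma}(\X_h))|\ll\lambda^{(\sigma\cdot\tilde\sigma)_{\id}-|h|}$ whenever the right side is $\le\tfrac12$.

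For the upper bound I would set $R:=(\sigma\cdot\tilde\sigma)_{\id}$, fix a constant $n_\ast$ with $C\lambda^{n_\ast}\le\tfrac12$ ($C$ from (iv)), and split the sum at $|h|=R-n_\ast$ and $|h|=R$. On $\{|h|\le R-n_\ast\}$, estimate (c) and (b) give a contribution $\ll\lambda^{R}\sum_h c_h\lambda^{-|h|}\ll\lambda^{R}\sum_{n\ge1}\lambda^{n}/n\ll\lambda^{R}$; on the $n_\ast$ shells $R-n_\ast<|h|<R$ and on the tail $|h|\ge R$, estimates (a)–(b) give $\ll\sum_{n\ge R-n_\ast}\lambda^{2n}\ll\lambda^{2R}\le\lambda^{R}$. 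Altogether $d^r_{\hbox{\tiny Martin}}(\Xi(\sigma),\Xi(\tilde\sigma))\ll\lambda^{R}=\big(\lambda_{\hbox{\tiny visual}}^{R}\big)^{\alpha}\asymp d_{\hbox{\tiny visual}}(\sigma,\tilde\sigma)^{\alpha}$ with $\alpha=\log\lambda/\log\lambda_{\hbox{\tiny visual}}$, which also yields Hölder continuity of $\Xi$ with exponent $\alpha$.

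For the lower bound it is enough to keep a single term of the sum. I would take $w$ a point of the geodesic $(\sigma\tilde\sigma)$ closest to $\id$ (so $|w|=R+O(\delta)$ and $w$ is $O(\delta)$-close to both $[\id,\sigma)$ and $[\id,\tilde\sigma)$), and $h_0\in[\id,\sigma)$ at distance $t$ beyond $w$, with $t$ a large universal constant. Combining \eqref{eq:ancona-for-conformal}, the Ancona inequality of Theorem~\ref{theo:Ancona-Gouezel inequalities}(i) and the symmetry hypothesis yields, with universal constants, $\mu_\sigma(\X_w)\asymp\mu_{\tilde\sigma}(\X_w)\asymp\G_r(\X_w)(x,\id)^{-1}$ and $\mu_\sigma(\X_{h_0})/\mu_{\tilde\sigma}(\X_{h_0})\asymp\G_r(\X_w)(y,h_0)^{-2}$; since $\G_r(\X_w)(y,h_0)\ll\lambda_1^{t}$ by Theorem~\ref{theo:geometric-boundary}(v), for $t$ large this ratio is $\ge2$, whence $d^r_{\hbox{\tiny Martin}}(\Xi(\sigma),\Xi(\tilde\sigma))\ge c_{h_0}\log 2$. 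Inserting \eqref{def:coefficients}, estimate (b) and $\#\{|g|=|h_0|\}\le C_\epsilon(\mathfrak{h}+\epsilon)^{|h_0|}$, and absorbing the polynomial factor $|h_0|\asymp R$ by enlarging $\mathfrak{h}+\epsilon$ slightly, gives $d^r_{\hbox{\tiny Martin}}(\Xi(\sigma),\Xi(\tilde\sigma))\gg\big(\lambda^{2}/(\mathfrak{h}+\epsilon)\big)^{R}\asymp d_{\hbox{\tiny visual}}(\sigma,\tilde\sigma)^{\beta}$ with $\beta=(2\log\lambda-\log(\mathfrak{h}+\epsilon))/\log\lambda_{\hbox{\tiny visual}}$. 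Reading this as $d_{\hbox{\tiny visual}}(\sigma,\tilde\sigma)\ll d^r_{\hbox{\tiny Martin}}(\Xi(\sigma),\Xi(\tilde\sigma))^{1/\beta}$ gives the Hölder exponent $1/\beta$ for $\Xi^{-1}$, and together with bijectivity and the upper bound this makes $\Xi$ a homeomorphism.

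The hard part will be estimate (c): Theorem~\ref{theo:geometric-boundary}(iv) is formulated for the "H-shaped" configuration of Figure~\ref{ancona-configuration}, but in the sum $h$ ranges over all of $G$, in particular over elements lying on or near the rays $[\id,\sigma)$, $[\id,\tilde\sigma)$, where that configuration degenerates; I would need to check that the estimate persists there with the depth read off from \eqref{eq:estimate-as-gromov-product}, and then organize the summation so that the $O(1)$ shells around $|h|=R$, where (iv) gives nothing, are swallowed by the sub-exponential bound (b). The remaining ingredients — the uniform Harnack bound (a) for conformal measures, the behaviour of $\G_r(\X_w)(y,h_0)^{-2}$ in the lower bound, and the bookkeeping that converts polynomial-in-$R$ losses into an arbitrarily small change of exponent through the $\epsilon$ in $\mathfrak{h}+\epsilon$ — are routine given the results already established (transience being guaranteed by Remark~\ref{rem:hyperbolic-amenable}).
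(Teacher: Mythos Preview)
Your proposal is correct and follows essentially the same route as the paper: both arguments split the sum $\sum_h c_h|\log\mathbb{K}_r(h,\cdot)-\log\mathbb{K}_r(h,\cdot\cdot)|$ according to whether $|h|$ is small or large compared to the Gromov product $N=(\sigma\cdot\tilde\sigma)$, use the Gou\"ezel--Lalley inequality for small $|h|$ and the crude bound $|\log\mathbb{K}_r(h,\cdot)|\ll|\log\G_r(\X_h)(x,\id)|$ for large $|h|$, and then isolate a single term near $|h|=N$ for the lower bound. The minor differences are that the paper works with $\mathbb{K}_r$ evaluated at finite approximants $(x,g),(\tilde x,\tilde g)$ rather than passing immediately to $\mu_\sigma,\mu_{\tilde\sigma}$, splits at $|h|=N/2$ rather than $|h|=N-n_\ast$, and for the lower bound chooses $h=\xi_2$ (the projection of $\id$ onto $[g,\tilde g]$) directly and reads off the size of the term from an explicit formula $2|\log\G_r(\X_{\xi_1})(x,\xi_2)|\pm C$, whereas you go a fixed distance $t$ past $w$ along $[\id,\sigma)$ and invoke the ratio estimate from Step~6 of Theorem~\ref{theo:geometric-boundary}; either choice works and the bookkeeping is the same.
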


\begin{proof}
In order to deduce Hölder continuity, we begin with a geometric description of the visual metric. Suppose that
$\sigma,\tilde \sigma \in \partial G$. By Proposition \ref{prop:Xi_is_well_defined}, there exist $x,\tilde x \in \Sigma$ such that $\lim \kappa^n(x) = \sigma$ and $\lim \kappa^n(\tilde x) = \tilde \sigma$.
Furthermore, it follows from the above estimates for the visual metric that
\begin{align*}
 d_{\hbox{\tiny visual}}(\sigma,\tilde \sigma)  & \geq C \lambda_{\hbox{\tiny visual}}^{(\sigma \cdot \tilde \sigma )} \geq  C \lambda_{\hbox{\tiny visual}}^{\liminf_{m,n \to \infty} (\kappa^m(x)\cdot \kappa^n(\tilde x)) + 2 \delta} \\
 & = \left(C\lambda^{2\delta} \right) \lambda_{\hbox{\tiny visual}}^{\liminf_{m,n \to \infty} (\kappa^m(x)\cdot \kappa^n(\tilde x)) }.
\end{align*}
Moreover, as $(\kappa^n(x))$ and  $(\kappa^n(\tilde x))$ converge at infinity, it also follows that these sequences leave any finite subset of $G$. Now suppose that $\liminf_{m,n \to \infty} (\kappa^m(x)\cdot \kappa^n(\tilde x)) = N$. Then there exists $m,n$ arbitrary large such that $ (g\cdot \tilde g) = N$, for $g := \kappa^m(x)$ and $\tilde g := \kappa^n(\tilde x)$. In particular,  $ d_{\hbox{\tiny visual}}(\sigma,\tilde \sigma) \asymp \lambda_{\hbox{\tiny visual}}^N$.

This geometric characterization now allows to employ Theorems \ref{theo:Ancona-Gouezel inequalities} and Theorem \ref{theo:geometric-boundary} in  order to obtain refined estimates for $|\log \mathbb{K}_r(h,\cdot)|$ and $|\log \mathbb{K}_r(h,\cdot) - \log \mathbb{K}_r(h,\cdot\cdot)|$. For $g, \tilde g, \in G$ and let $\xi \in G$ refer to the element in the geodesic arc $[\id, h]$ closest to $g$, and
$\xi_1,\xi_2 \in G$ refer the elements in a geodesic arc $[g, \tilde g]$ closest to $h$ and $\id$, respectively (see Figure \ref{fig:Configurations}).
\begin{figure}[htbp] 
   \centering
         \def\svgwidth{0.4\textwidth}
 \begingroup%
  \makeatletter%
  \providecommand\color[2][]{%
    \errmessage{(Inkscape) Color is used for the text in Inkscape, but the package 'color.sty' is not loaded}%
    \renewcommand\color[2][]{}%
  }%
  \providecommand\transparent[1]{%
    \errmessage{(Inkscape) Transparency is used (non-zero) for the text in Inkscape, but the package 'transparent.sty' is not loaded}%
    \renewcommand\transparent[1]{}%
  }%
  \providecommand\rotatebox[2]{#2}%
  \newcommand*\fsize{\dimexpr\f@size pt\relax}%
  \newcommand*\lineheight[1]{\fontsize{\fsize}{#1\fsize}\selectfont}%
  \ifx\svgwidth\undefined%
    \setlength{\unitlength}{377.00360619bp}%
    \ifx\svgscale\undefined%
      \relax%
    \else%
      \setlength{\unitlength}{\unitlength * \real{\svgscale}}%
    \fi%
  \else%
    \setlength{\unitlength}{\svgwidth}%
  \fi%
  \global\let\svgwidth\undefined%
  \global\let\svgscale\undefined%
  \makeatother%
  \begin{picture}(1,0.7528282)%
    \lineheight{1}%
    \setlength\tabcolsep{0pt}%
    \put(0,0){\includegraphics[width=\unitlength,page=1]{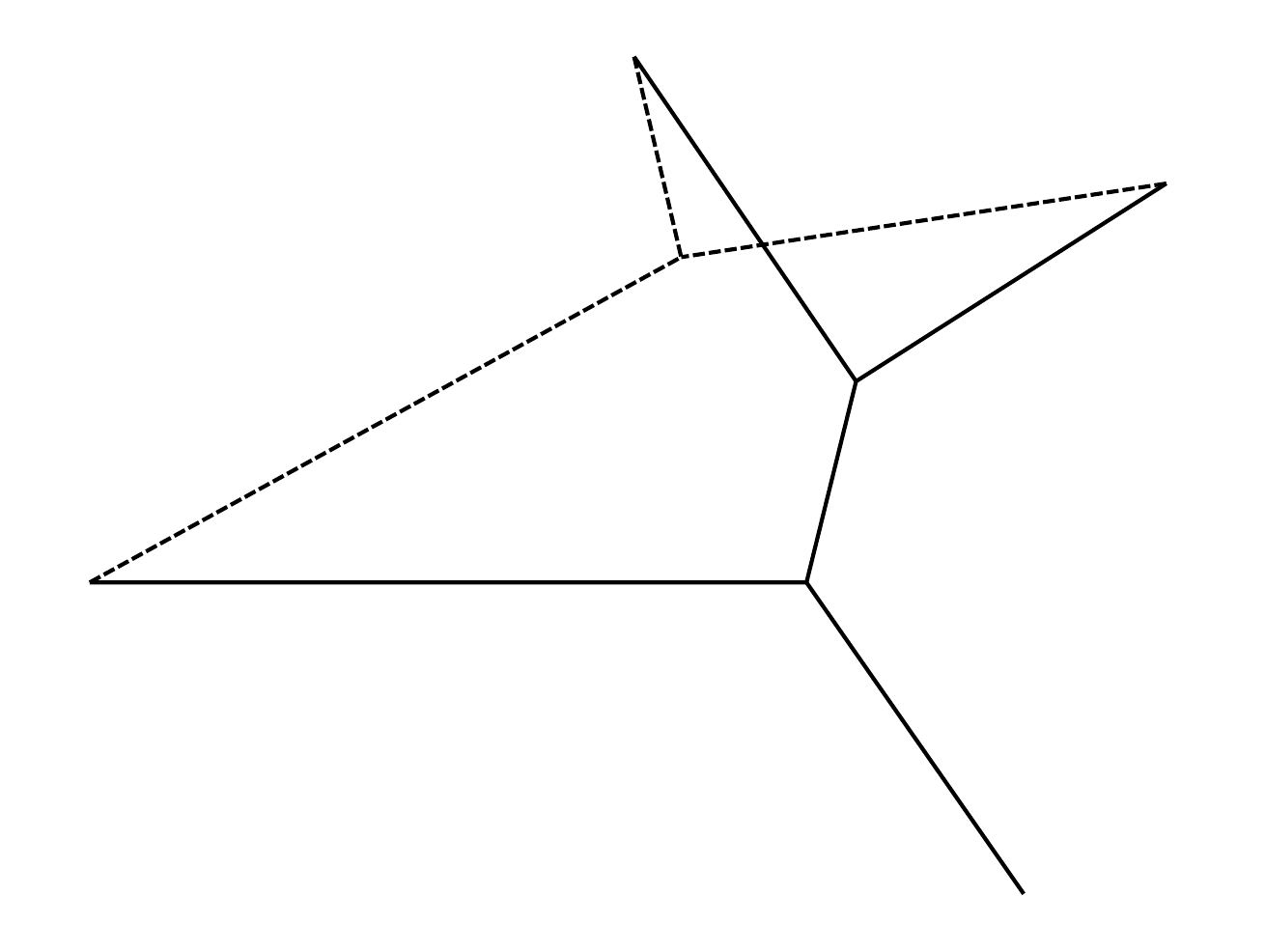}}%
    \put(-0.0028105,0.27953743){\color[rgb]{0,0,0}\makebox(0,0)[lt]{\lineheight{1.25}\smash{\begin{tabular}[t]{l}$\id$\end{tabular}}}}%
    \put(0.59520997,0.22767443){\color[rgb]{0,0,0}\makebox(0,0)[lt]{\lineheight{1.25}\smash{\begin{tabular}[t]{l}$\xi_2$\end{tabular}}}}%
    \put(0.68255812,0.39145224){\color[rgb]{0,0,0}\makebox(0,0)[lt]{\lineheight{1.25}\smash{\begin{tabular}[t]{l}$\xi_1$\end{tabular}}}}%
    \put(0.81631003,0.00657425){\color[rgb]{0,0,0}\makebox(0,0)[lt]{\lineheight{1.25}\smash{\begin{tabular}[t]{l}$\tilde{g}$\end{tabular}}}}%
    \put(0.94187307,0.60436336){\color[rgb]{0,0,0}\makebox(0,0)[lt]{\lineheight{1.25}\smash{\begin{tabular}[t]{l}$g$\end{tabular}}}}%
    \put(0.47340025,0.72833407){\color[rgb]{0,0,0}\makebox(0,0)[lt]{\lineheight{1.25}\smash{\begin{tabular}[t]{l}$h$\end{tabular}}}}%
    \put(0.53527189,0.49631553){\color[rgb]{0,0,0}\makebox(0,0)[lt]{\lineheight{1.25}\smash{\begin{tabular}[t]{l}$\xi$\end{tabular}}}}%
  \end{picture}%
\endgroup%
   \caption{A configuration of $g, \tilde g$ and  $h,\id$  with $\xi_1 \neq \xi_2$}
   \label{fig:Configurations}
\end{figure}
\begin{enumerate}
\item By applying part (i) of  Theorem \ref{theo:Ancona-Gouezel inequalities}, it follows that, for some uniform constant $C>0$ ,
\[\left|\log \mathbb{K}_r(h,(x,g)) \right|  =  \left|\log \mathbb{K}_r(h,(x,\xi))  \right|  \pm C. \]
It follows from the symmetry $\G_r(\X_g)(x,\id) \asymp \G_r(\X_{\id})(x,g)$ and  a further application of  part (i) of  Theorem \ref{theo:Ancona-Gouezel inequalities} that
$\mathbb{K}_r(h,(x,\xi)) \asymp  \mathbb{G}_r(\X_{h})(x,\id)/\mathbb{G}_r(\X_{\xi})(x,\id)^2$.
As $\mathbb{G}_r(\X_{h})(x,\id) \gg \mathbb{G}_r(\X_{\xi})(x,\id)$, we obtain that
\[\left|\log \mathbb{K}_r(h,(x,g)) \right|  =
\left|\log \mathbb{G}_r(\X_{h})(x,\id)) - 2\log \mathbb{G}_r(\X_{\xi})(x,\id))   \right|
\pm C \leq  \left|\log \mathbb{G}_r(\X_{h})(x,\id))\right| + C. \]
Moreover, part (v) of Theorem \ref{theo:geometric-boundary} implies that this bound grows linearly in $|h|$.
\item If $g, \tilde g$ and  $h,\id$ are in configuration as in part (ii) of Theorem \ref{theo:Ancona-Gouezel inequalities}, then  $\xi_1=\xi_2$ and
\[\left|\log \mathbb{K}_r(h,(x,g)) - \log \mathbb{K}_r(h,(\tilde x,\tilde g))\right|  \ll \lambda^{(g \cdot \tilde g)-|h|}.\]
\item If $g, \tilde g$ and  $h,\id$ are not necessarily in this configuration (see Figure \ref{fig:Configurations}), then part (i) above implies that
\[ \left|\log \mathbb{K}_r(h,(x,g)) - \log \mathbb{K}_r(h,(\tilde x,\tilde g))\right| = 2 \left|\log \mathbb{G}_r(\X_{\xi_1})(x,\xi_2))\right| \pm C.\]
\end{enumerate}
We begin with the estimate from above. In order to do so, observe that $|\log \mathbb{G}_r(h,(x,\id))|^{-1}$ tends to $0$ as $|h| \to \infty$ by (v) of Theorem \ref{theo:geometric-boundary}.
This implies that
 $\sum_{h \in G} c_h < \infty$. Moreover, by approximation by a graph, we may assume that
 $ |\xi_2| = (g \cdot \tilde g) =: N$. Furthermore, we have that $|h| \leq N/2$ implies that  $h,\id$ and $g,\tilde g$ are in a configuration as in (ii) of Theorem \ref{theo:Ancona-Gouezel inequalities}. Hence
\begin{align*}
d^r_{\hbox{\tiny Martin}}(\sigma,\tilde \sigma) &  = \lim_{g \to \sigma, \tilde g \to \tilde \sigma}\;
\sum_{|h| \leq N/2} c_h \left|\log \mathbb{K}_r(h,(x,g)) - \log \mathbb{K}_r(h,(\tilde x,\tilde g))\right|  \\
& \phantom{=} + \lim_{g \to \sigma, \tilde g \to \tilde \sigma}\;
 \sum_{|h| > N/2} c_h \left|\log \mathbb{K}_r(h,(x,g)) - \log \mathbb{K}_r(h,(\tilde x,\tilde g))\right| \\
& \ll  \sum_{n = 1}^{N/2}   \sum_{|h|=n } c_h  {\lambda}^{(g \cdot \tilde g)-|h|} +  \sum_{|h| > N/2} c_h
\ll
 {\lambda}^{N} \sum_{n = 1}^{\infty}  {\lambda}^{n} +   \sum_{n = N/2}^{\infty}  {\lambda}^{2n} \ll  {\lambda}^{N}.
\end{align*}
For the estimate from below, it suffices to consider  the term with $h=\xi_1=\xi_2$. Namely, it follows from (iii) that
\begin{align*}
d^r_{\hbox{\tiny Martin}}(\sigma,\tilde \sigma) &  \geq \lim_{g \to \sigma, \tilde g \to \tilde \sigma}\;
  c_{|\xi_2|} \left|\log \mathbb{K}_r(\xi_2,(x,g)) - \log \mathbb{K}_r(\xi_2,(\tilde x,\tilde g))\right|\\
 &  \gg \frac{{\lambda}^{2N}}{\kappa_{N }}
 {\left|\log \mathbb{G}_r(\X_{\xi_2})(x,\id))\right| }^{-1}.
\end{align*}
As $\left|\log \mathbb{G}_r(\X_{\xi_2})(x,\id))\right|$ grows linearly, it follows that $d^r_{\hbox{\tiny Martin}}(\sigma,\tilde \sigma)  \geq C_\epsilon (\lambda^2/(\mathfrak{h}+ \epsilon))^N$, for any $\epsilon > 0$ and a constant $C_\epsilon > 0$. Hence, we have shown that
\[  d_{\hbox{\tiny visual}}(\sigma,\tilde \sigma)^\beta \asymp \lambda_{\hbox{\tiny visual}}^{\beta N}  = \frac{\lambda^{2N}}{(\mathfrak{h}+ \epsilon)^N}  \ll d^r_{\hbox{\tiny Martin}}(\Xi(\sigma),\Xi(\tilde \sigma))  \ll \lambda^N = \lambda_{\hbox{\tiny visual}}^{\alpha N} \asymp d_{\hbox{\tiny visual}}(\sigma,\tilde \sigma)^\alpha , \]
for $\alpha = \log \lambda/\log \lambda_{\hbox{\tiny visual}}$ and $\beta = (2\log \lambda - \log(\mathfrak{h}+ \epsilon))/\log \lambda_{\hbox{\tiny visual}}$.
\end{proof}

\section{An application to regular covers of convex-cocompact CAT(-1) metric spaces}\label{sec:regular cover}

We give an application of our main theorem in order to caracterize the set of $\delta$-conformal measures of a regular cover of a  convex-cocompact CAT(-1) metric space, where we assume that $\delta$ is the abcissa of convergence of the cover and that the covering group is word hyperbolic. 

We now recall the basic definitions and refer for the details to \cite{DasSimmonsUrbanski:2017}. A \emph{CAT(-1) space} $X$ is a geodesic space such that each geodesic triangle is thinner than a comparison triangle in the hyperbolic plane with constant curvature -1. An important feature of  CAT(-1) spaces is that they are strongly hyperbolic which implies, in particular, that their visual boundary $\partial X$ coincides with the topological boundary, the visual metric on $\partial X$ simplifies to $D_\mathbf{o}(\xi,\eta) := e^{-(\xi,\eta)_\mathbf{o}}$ and, for any isometry $g$ of $X$, the conformal derivative $g'$ exists and satisfies, for any $\xi \in \partial X$
\[ g'(\xi) := \lim_{\eta \to \xi, \eta \in \partial X} \frac{D_\mathbf{o}(g(\xi),g(\eta))}{D_\mathbf{o}(\xi,\eta)} = e^{-B_\xi\left(g^{-1}(\mathbf{o}),\mathbf{o}\right)}.  \]
In here, $B_\xi(x,y) := (y \cdot \xi)_x - (x \cdot \xi)_y$ is the Busemann function, and $\mathbf{o}$ is some point in $X$.

As we are interested in quotients of $X$, assume that  
$\Gamma$ is a discrete subgroup $\Gamma$ of the isometry group $\hbox{Isom}(X)$ of $X$ 
which acts freely and properly discontinuously on $X$.
Furthermore, as it is well known, these assumptions imply that 
$X/\Gamma$ inherits several properties of $X$, but only locally. For example,
each element in $X/\Gamma$ has a neighbourhood which is a CAT(-1) metric space.
A basic object in the analysis of $\Gamma$ is its limit set $\Lambda(\Gamma)$
defined by $\Lambda(\Gamma) := \overline{\Gamma(\mathbf{o})} \cap \partial X$ and
the class of $s$-conformal measures (for $s > 0$), where  a Borel probability measure 
on $\partial X$ is referred to as an $s$-conformal measure if 
\begin{equation}\label{eq:N-conformal measure} \mu(g(A)) = \int_A (g'(\xi))^s d\mu(\xi)\end{equation}    
for any Borel subset $A$ of $\partial X$. The relevance of these measures stems from 
the fact that under reasonable assumptions on $\Gamma$ and $s$, 
there is only one conformal measure, and this measure gives rise to canonical measures for the geodesic flow and the horocyclic foliation. As a standing assumption, we assume from now on that any subgroup of $\hbox{Isom}(X)$ is discrete, and acts freely and properly discontinuously on $X$. To each such $\Gamma$, the  Poincaré exponent is defined by 
\[ \delta(\Gamma) := \sup\left\{s \geq 0: \sum_{g \in \Gamma} e^{-s d(\mathbf{o},g(\mathbf{o}))} = \infty  \right\}.  \]
Moreover, $\Gamma$ is referred to as of divergence type if $\sum_{g \in \Gamma} e^{-\delta d(\mathbf{o},g(\mathbf{o}))} = \infty$. The relevance of this is a consequence of the Hopf-Tsuji theorem (for the setting of CAT(-1) spaces, see \cite{DasSimmonsUrbanski:2017}), which states that $\Gamma$ is of divergence type if and only if, for any $\delta$-conformal measure $\mu$, the  action of $\Gamma$ on $  (\partial X,\mu)^2$ is ergodic. Moreover, being of divergence type implies uniqueness of $\mu$. 

In fact, we are interested in the interplay of the following type of groups. 
\begin{defn}  We refer to a cover $Y$ of $X/\Gamma$ as a regular cover (or periodic cover) if there exists a normal subgroup $N$ of $\Gamma$ such that $Y = X/N$. In this situation, $G:= \Gamma/N$ refers to the covering group (or period) of the cover.  Moreover, we refer to  $\Gamma$ as convex-cocompact if the convex hull of $\Lambda(\Gamma)$ 
in $\overline{X}$ is compact. 
\end{defn}

Observe that the class of convex-cocompact groups acting on a CAT(-1) metric space is a flexible object as manifolds of pinched negative curvature with compact convex core as well as the action of a word hyperbolic group on its Cayley graph are in this class. Furthermore, due to the close connection to the  basic example of an Anosov flow, the geodesic flow on a closed manifold of constant negative curvature, the action of this class of groups is well studied. For example, the  Poincaré exponent of a convex-cocompact group his finite and the group is of divergence type (\cite{DasSimmonsUrbanski:2017}) and, in particular, the geodesic flow is ergodic with respect to the Liouville-Patterson-Sullivan measure constructed from the unique $\delta$-conformal measure.

The geodesic flow on regular covers, on the other hand, in many cases is totally dissipative as the dynamics somehow behave like a random walk on the covering group. That is, even though there does not exist a complete dictionary between random walks and the regular covers, there are several parallel results, like Rees' version of Polya's result on the transience of the simple random walk for  
$\Z^d$-covers (\cite{Rees:1981}), or Brooks' 
amenability criterium (\cite{Brooks:1985}, see also \cite{Stadlbauer:2013,DougallSharp:2015,CoulonDalBoSambusetti:2018}) in the sprit of Kesten (\cite{Kesten:1959a}). 
Our application of Theorem \ref{theo:geometric-boundary} adds a further item to this list as it provides a complete description of the $\delta(N)$-conformal minimal measures in analogy to Ancona's result on the geometric realization of minimal harmonic functions. From now on, we refer to $\mathcal{G}(X)$, $\mathcal{G}(X/\Gamma)$ and $\mathcal{G}(X/N)$ as the space of geodesics of the local CAT(-1) spaces $X$, $X/\Gamma$ and $X/N$.

\begin{theorem}\label{theo:geometric-application} Assume that $X$ is a CAT(-1) space, that $\Gamma$ is a  convex-cocompact, discrete subgroup of $\hbox{Isom}(X)$ and that $N$ is a normal, non-elementary subgroup of $\Gamma$ such that $G:=\Gamma/N$ is  hyperbolic and such that the geodesic flow on $\mathcal{G}(X/N)$ is topologically transitive.  Then the set of minimal, $\delta(N)$-conformal measures can be identified with $\partial G$.       
\end{theorem}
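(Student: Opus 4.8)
The plan is to derive the statement from Theorem~\ref{theo:geometric-boundary} by coding the geodesic flow of the cover. Since $\Gamma$ is convex-cocompact, the convex core of $X/\Gamma$ is a compact, locally CAT(-1) space, and the construction in \cite{ConstantineLafontThompson:2020} presents the geodesic flow on $\mathcal{G}(X/\Gamma)$, up to a set negligible for every conformal measure, as a suspension over a topologically mixing subshift of finite type $(\Sigma,\theta)$ on a finite alphabet with a Hölder roof function $\tau>0$. Because $N$ is normal in $\Gamma$ with quotient $G$, the flow on $\mathcal{G}(X/N)$ is coded by the suspension under $\tau\circ\pi$ of the group extension $T\colon\Sigma\times G\to\Sigma\times G$, $(x,g)\mapsto(\theta x,g\kappa(x))$, where $\kappa\colon\Sigma\to G$ is the locally constant cocycle recording the deck translation of a return to the Markov section; topological transitivity of the flow on $\mathcal{G}(X/N)$ then translates into topological transitivity of $T$.

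I would next fix the geometric potential $-\delta(N)\tau$ on $(\Sigma,\theta)$ and normalise it: if $\Lambda=e^{P(-\delta(N)\tau)}$ is the leading eigenvalue of the corresponding Ruelle operator, $h$ its positive Hölder eigenfunction and $\nu$ the eigenmeasure, then $\theta$ with the invariant probability $m_0:=h\nu$ is a Gibbs--Markov map of finite type with $\cL_\theta(\1)=\1$ and $\log dm_0\circ\theta/dm_0$ Hölder, so it fits the standing assumptions of the paper, while the cocycle $\kappa$ is unchanged. Since $\delta(N)<\delta(\Gamma)$ (the amenability criterion for normal subgroups, as in \cite{Stadlbauer:2013}), one has $\Lambda>1$. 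One then checks the hypotheses of Theorem~\ref{theo:geometric-boundary} for the extension $T$: the group $G$ is non-elementary word hyperbolic by assumption, hence non-amenable, so by Remark~\ref{rem:hyperbolic-amenable} the extension $T$ is transient and $R>1$; a computation with the weights $\varphi_w\asymp e^{-\delta(N)\,\ell(w)}$, where $\ell(w)$ is the geometric length of the segment coded by $w$, together with the fact that the orbital counting exponent of $N$ equals $\delta(N)$, shows $\rho=\Lambda^{-1}$, i.e. $R=\Lambda$, so that $\Lambda\cL$ is, after conjugation by $h\circ\pi$, precisely the unnormalised transfer operator of $T$ attached to $-\delta(N)(\tau\circ\pi)$; finally, the symmetry hypothesis $\G_R(\X_g)(\,\cdot\,,\id)\asymp\G_R(\X_{\id})(\,\cdot\,,g)$ is obtained from the time-reversal symmetry of the geodesic flow, which exchanges a coded segment from $\id$ to $g$ with one from $g$ to $\id$ of the same geometric length, up to the bounded distortion introduced by the coding.

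With these verifications in place, Theorem~\ref{theo:geometric-boundary} at $r=R$ yields an order-preserving bijection between $\partial G$ and the set of minimal $1/R$-conformal measures of $\cL$. Un-normalising, $m\mapsto(h\circ\pi)^{-1}m$ is an order-preserving bijection from the $1/R$-conformal measures of $\cL$ onto the measures on $\Sigma\times G$ that are conformal, with eigenvalue one, for the unnormalised transfer operator of $T$ associated with $-\delta(N)(\tau\circ\pi)$. The coding of \cite{ConstantineLafontThompson:2020}, together with the standard Hopf/suspension dictionary relating $\delta$-conformal densities on $\partial X$ to $\cL_{-\delta\tau}$-conformal measures (compare \cite{DasSimmonsUrbanski:2017}), then identifies the latter, again preserving the cone order, with the $\delta(N)$-conformal measures of $N$ on $\partial X$. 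Composing the three bijections, minimal measures correspond to minimal measures throughout, which gives the claimed identification of the set of minimal $\delta(N)$-conformal measures with $\partial G$.

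The step I expect to be the main obstacle is this last translation: making the passage between conformal measures on the symbolic space $\Sigma\times G$ and $\delta(N)$-conformal measures on $\partial X$ rigorous and genuinely bijective. One must treat the flow (suspension) direction so that conformal measures on the boundary match the transversally conformal measures of the coded flow, absorb the non-injectivity of the coding on its exceptional negligible set, and verify that the correspondence is compatible with the cone order so that minimality is transported in both directions. A secondary technical point is the verification of the symmetry $\G_R(\X_g)(\,\cdot\,,\id)\asymp\G_R(\X_{\id})(\,\cdot\,,g)$ from the reversibility of the geodesic flow on $\mathcal{G}(X/N)$, which is required in order to invoke Theorem~\ref{theo:geometric-boundary}.
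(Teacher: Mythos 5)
Your plan follows the paper's own route: code the flow via \cite{ConstantineLafontThompson:2020}, pass to the extension of the coded base by $G=\Gamma/N$ with the geometric potential at exponent $\delta(N)$, identify the critical parameter, check the symmetry hypothesis, invoke Theorem \ref{theo:geometric-boundary}, and translate back to $\partial X$. However, the two items you defer as ``obstacles'' are exactly the substance of the paper's proof, and in both cases a concrete idea is missing. For the measure correspondence there is no off-the-shelf suspension/Hopf dictionary to cite in this setting; the paper builds it. The essential device is to abandon the suspension picture and produce a \emph{one-sided} Markov map whose phase space consists of boundary coordinates: after choosing coherent lifts of the Markov rectangles by means of a spanning tree of the transition graph, one observes that the deck transformation $\kappa_{(\xi,\eta,t_i)}$ attached to a return depends only on the endpoint $\eta$ and the atom $i$, so the two-sided coding factors onto a Markov map $\theta$ on $\biguplus_i \overline{V}_i\subset\partial X$ with locally constant $\kappa$ and potential $\varphi(\eta,i)=-\delta(N)B_\eta(\kappa_{\eta,i}(\mathbf{o}),\mathbf{o})$. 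Only because the one-sided coordinate \emph{is} a boundary point does the restriction of a symbolic conformal measure to $V_i\times\{\id\}$ literally become a measure on $\partial X$, and even then both directions of the bijection require the Busemann-cocycle computations and a transitivity argument to propagate conformality from branches returning to $\Sigma\times\{\id\}$ to all of $\Sigma\times G$ (plus the final unnormalisation by the eigenfunction $h$), with order preservation so that minimality transfers. Working with the roof function $\tau$ on a genuinely two-sided shift, as you propose, the potential is not a function of one-sided coordinates, so even fitting the extension into the paper's Gibbs--Markov framework is not yet done.

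The second gap is the symmetry $\G_R(\X_g)(\cdot,\id)\asymp\G_R(\X_{\id})(\cdot,g)$: this does not follow from time reversal of the flow ``up to bounded distortion of the coding,'' because a Markov section has no reason to be invariant under the flip, so the reversal of a coded segment need not itself be coded. The paper uses Adachi's extension \cite{Adachi:1986} of Rees' construction to obtain a Markov family carrying an involution $\iota$ on the alphabet, hence on words, with $\kappa_{\iota w}=\kappa_w^{-1}$ and the same geometric length; only with such a flip-equivariant coding does the comparison of the two Green series follow. Without it, this hypothesis of Theorem \ref{theo:geometric-boundary} remains unverified. (A minor further point: your counting estimate with weights $e^{-\delta(N)d(\kappa_w(\mathbf{o}),\mathbf{o})}$ only yields divergence above the critical parameter; finiteness at the critical parameter itself comes from non-conservativity via Zimmer's theorem and \cite{Stadlbauer:2019}, which is consistent with your appeal to Remark \ref{rem:hyperbolic-amenable} but should be said.)
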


\begin{proof} The strategy of proof is as follows. In Step 1, we construct the Markov map $(\Sigma,\theta)$ and its group extension by $G$. In Step 2, we then specify the associated potential and show that the reversibility condition $\G_R(\X_g)(x,\id) \asymp \G_R(\X_{\id})(x,g)$ holds. Finally, in Steps 3 and 4, we identify the $\delta(N)$-conformal measures on $\partial X$ with the conformal measures on $\Sigma \times G$. The theorem then follows from Theorem \ref{theo:geometric-boundary}.

\medskip
\noindent\textsc{Step 1. The Markov map $\theta$.} 
The first part of proof makes use of coding of the geodesic flow on $\mathcal{G}(X/\Gamma)$  as constructed in \cite{ConstantineLafontThompson:2020}. In there, the authors construct a Poincaré section such that 
\begin{enumerate}
\item the first return map $H$ is 
coded by a topologically transitive bilateral subshift of finite type,
\item \label{coding:atoms-lift}
the atoms  $\left\{A_1, \ldots A_n \right\}$ of the Markov partition of the section are of the form $A_i = \pi(R_i)$, where the $\{R_i\}$ are, in Hopf coordinates,  of the form 
\[R_i = \left\{ (\xi, \eta, t_i(\xi, \eta)) : \xi \in  {U_i}, \eta \in  {V_i} \right \},\]
for some disjoint open subsets $U_i,V_i$ of  $\partial X$ and functions $ t_i(\xi, \eta) :U_i \times V_i \to \R$,
\item the return time $h: \bigcup_{i=1}^n A_i \to (0,\infty)$ to the section is Hölder continuous.     
\end{enumerate}
By considering the $p$-th iterate of $H$, where $p$ refers to the period of $\theta$, the subshift of finite type $\theta^p$ decomposes into its topological mixing components. However, as $H$ is the first return map, any of these components provides us with a Markov coding for the geodesic flow whose associated subshift of finite type is topological mixing. Hence, we may assume without loss of generality that the shift is topologically mixing. 
 
In order to associate elements of $\Gamma$ to $H$, we proceed as follows.  
The possible transitions of $\Sigma$ define a connected graph $\mathfrak{G}$. Now choose a subgraph $\mathfrak{T}$ which is connected, has no loops and has the same set of vertices as $\mathfrak{G}$, that is, $\mathfrak{T}$ is a minimal spanning tree of $\mathfrak{G}$. 
We now construct a lift of the atoms $\left\{A_1, \ldots A_n \right\}$ to $\mathcal{G}(X)$ based on the choice of $\mathfrak{T}$. Suppose that the lift 
$\hat{A}_i \subset \mathcal{G}(X)$ of $A_i$ already was constructed. Furthermore, suppose that ${A}_j$ is a neighbour of $A_i$ in $\mathfrak{T}$. Then $H((A_i)) \cap (A_j) \neq \emptyset$ or  $H(A_j) \cap A_i \neq \emptyset$. In the first case, there exists a unique lift $\hat{A}_j \subset \mathcal{G}(X)$ such that $\left\{g_{h(\pi(x))}(x) : x \in \hat{A}_i \right\} \cap \hat{A}_j \neq \emptyset$. And, if $H(A_i) \cap  A_j  = \emptyset$, the same argument  gives rise to a unique $\hat{A}_i \subset \mathcal{G}(X)$ with  $\left\{g_{h(\pi(x))}(x) : x \in \hat{A}_j \right\} \cap   \hat{A}_i  \neq \emptyset$. As $\mathfrak{T}$ is a minimal spanning tree, this construction provides a construction of lifts $\left\{\hat{A}_1, \ldots \hat{A}_n \right\}$. Moreover, as $\pi \circ g = \pi$ for all $g \in \Gamma$, we may identify $R_i := \hat{A}_i$, for $i=1, \ldots, n$ in the property of the above coding.

 The elements of $\Gamma$ associated to $H$ are now constructed as follows. Suppose that $x = (\xi, \eta, t_i(\xi, \eta)) \in {R_i}$ and that $g_{h(\pi(x))}(\pi(x)) \in A_j$. Then there exists a unique $\kappa_x \in \Gamma$, depending only on $i$ and $j$, such that   
$g_{h(\pi(x))}(x) \in \kappa_x({R}_j)$ and, in Hopf coordinates,
\[ g_{h(\pi(x))}\left(\pi(\xi, \eta, t_i(\xi, \eta))\right) = \pi\left(\xi, \eta, t_i(\xi, \eta)+h(\pi(x)) \right) =  \pi\left(\kappa_x^{-1}(\xi), \kappa_x^{-1}(\eta), t_j(\kappa_x^{-1}(\xi), \kappa_x^{-1}(\eta))  \right).  \]
However, as the functions $t_i$ can be recovered from $\xi$, $\eta$ and $i$, we identify $H$ with  
\begin{equation}\label{eq:two-sided-coding}  \biguplus_{i=1}^n U_i \times V_i \to  \biguplus_{i=1}^n U_i \times V_i, \quad (\xi,\eta) \mapsto \left(\kappa_{(\xi,\eta,t_i(\xi,\eta))}^{-1}(\xi), \kappa_{(\xi,\eta,t_i(\xi,\eta))}^{-1}(\eta)\right), \hbox{ for } (\xi,\eta) \in  U_i \times V_i,  \end{equation}
where $\biguplus$ stands for the disjoint union. The following observation is crucial and follows immediately from the definition of a Markov partition (cf. Def. 3.8 in \cite{ConstantineLafontThompson:2020}). Namely, $\kappa_{(\xi,\eta,t_i(\xi,\eta))}$ in fact only depends on $\eta$ and $i$. Hence, with $\kappa_{\eta,i} := \kappa_{(\xi,\eta,t_i(\xi,\eta))}$, for $\eta \in V_i$, we obtain that 
 
\[
\begin{CD}
H:  \biguplus_{i=1}^n U_i \times V_i @>{(\xi,\eta) \mapsto \left(\kappa_{\eta,i}^{-1}(\xi),\kappa_{\eta,i}^{-1}(\eta) \right) }>>    \biguplus_{i=1}^n U_i \times V_i\\ @VVV @VVV  \\  \theta: \biguplus_{i=1}^n   V_i   @>{\eta \mapsto \kappa_{\eta,i}^{-1}(\eta)}>> \biguplus_{i=1}^n   V_i 
 \end{CD}
\]
commutes. In particular, by setting $ \Sigma:= \biguplus_{i=1}^n   \overline{V}_i$ and $\theta|_{\overline{V}_i} (\eta):=  \kappa_{\eta,i}^{-1}(\eta)$, we obtain a non-invertible, surjective Markov map which is coded by a one-sided topological mixing subshift of finite type and which, up to points on the boundaries $\{\partial V_i\}$, is a factor of $H$. 

\medskip
\noindent\textsc{Step 2. Associated measures and reversibility of the extension.} 
We now analyse the regularity of the potential function 
\begin{equation}  \label{eq:geometric potential} \varphi(\eta,i) := \delta(N) \log  \left((\kappa_{\eta,i}^{-1})'(\eta,i)\right)= - \delta(N)  B_\eta\left(\kappa_{\eta,i}(\mathbf{o}),\mathbf{o}\right) .\end{equation} 
As it is well known, the Busemann function is 1-Lipschitz continuous. Furthermore, it follows from the expansivity of the geodesic flow, that the map $\theta$ is eventually uniformly expanding. By combining these two observations, it follows that $\varphi :\Sigma \to \R$ is Hölder continuous with respect to the shift metric on $\Sigma$. Therefore, by application of Ruelle's operator theorem, there exists a unique equilibrium state $hd\mu$ for $\varphi$, where $h$ is a Hölder continuous function which is bounded away from 0, and $\mu$ is a conformal measure, which means in the setting of Markov maps or shift spaces that $ d\nu\circ\theta = \lambda e^{- \varphi}d\nu$, for some $\lambda > 0$. 

The potential $\varphi$ is related to the Poincaré series of $N$ through the group extension 
\[ T: \Sigma \times G \to \Sigma \times G, (x,gN) \mapsto (\theta(x),g\kappa_xN) \]
as follows.  As $G$ is non-amenable, observe that it follows from the main result in \cite{Stadlbauer:2013} that $\lambda > 1$. In order to determine $R$, we make use of the fact that 
$R$ is the radius of convergence of the series $\G_r(\X_{\id})(x,\id)$, seen as a function of $r$. By conformality and by Lemma \ref{lemma:distortion of L}, 
\begin{align*} \G_r(\X_{\id})(x,\id) \asymp  \int_{\mathcal{X}_{\id}} \G_r(\X_{\id}) hd\mu  = \sum_{n=0}^\infty r^n  \int \X_{\id}\circ T^n  \X_{\id} hd\mu \asymp \sum_{n=0}^\infty r^n  \sum_{w \in \cW^n: \kappa(w) \in N} \mu([w])
\end{align*}
Now set, for $w \in \cW^n$, $\kappa_w := \kappa_x \kappa_{\theta(x)} \cdots \kappa_{\theta^{n-1}(x)}$ for some $x \in [w]$. As the Busemann function is a cocycle, it follows for $x = (\eta,i) \in [w]$ that  
\[S_n(\varphi)(x) :=  \sum_{k=0}^{n-1} \varphi\circ \theta^k(x) =    - \delta(N)  B_\eta\left(\kappa_w(\mathbf{o}),\mathbf{o}\right) . \]     
Furthermore, a well known geometric argument for convex-cocompact groups shows that there exists a constant $K$, independent from $x = (\eta,i)$ and $w$ such that $\eta$ is in  $\{\eta \in \partial X:   (\eta \cdot \mathbf{o}))_{\kappa_w(\mathbf{o})} \leq K \}$, known as the $K$-shadow of $\kappa_w(\mathbf{o})$ from $\mathbf{o}$ of parameter $K$, which then implies that $ B_\eta\left(\kappa_w(\mathbf{o}),\mathbf{o}\right) = d \left(\kappa_w(\mathbf{o}),\mathbf{o}\right) \pm C$, for some $C>0$   (see, e.g., Observation 4.5.3 in \cite{DasSimmonsUrbanski:2017}).   
Hence,  
\begin{align*}  \G_r(\X_{\id})(x,\id) &  \asymp   \sum_{n=0}^\infty (\lambda r)^n  \sum_{w \in \cW^n: \kappa(w) \in N}  e^{ - \delta(N) d\left(\kappa_w(\mathbf{o}),\mathbf{o}\right)}  = 
\sum_{w :  \kappa(w)  \in N}   e^{ - d\left(\kappa_w(\mathbf{o}),\mathbf{o}\right) \left( \delta(N) - \frac{|w| \log (\lambda r)}{d\left(\kappa_w(\mathbf{o}),\mathbf{o}\right) }  \right)}
\end{align*}
As the return time to the section is bounded from away from 0 and infinity by construction, it follows hat  $C^{-1}|w| < d\left(\kappa_w(\mathbf{o}),\mathbf{o}\right)< C |w|$ for some $C> 0$. By combining this estimate with the observation that $\{w : \kappa(w) =g\} \neq \emptyset $ all $g\in N$ as $X$ is a geodesic space and the $\bigcup A_i$ are a Poincaré section for the flow on $X/\Gamma$, one obtains the bound    
\[   \G_r(\X_{\id})(x,\id) \gg  \sum_{g  \in N}   e^{ - d\left(g(\mathbf{o}),\mathbf{o}\right) \left( \delta(N) -  C^{-1}\log (\lambda r)   \right)},  \]
provided that $\lambda r \geq 1 $. Hence, $ \G_r(\X_{\id})(x,\id) = \infty $ for $\lambda r > 1$ as $\delta(N)$ is the Poincaré exponent of $N$. 
On the other hand, as $G$ is non-amenable, it follows from an application of a result by Zimmer in \cite{Zimmer:1978a} (see also \cite{Jaerisch:2013a}) that the product of $\mu$ on $\Sigma$ and the counting measure on $G$ is not conservative. Hence, Proposition 5.3 in \cite{Stadlbauer:2019} implies that $ \G_{\lambda^{-1}}(\X_{\id})(x,\id) < \infty$. Therefore, $R= 1/\lambda$.

We now  verify the reversibility condition. In order to do so, we make use of the generalisation by Adachi in \cite{Adachi:1986} of Rees' refinement (\cite{Rees:1981}) and obtain that we may in fact assume that there is an involution $\iota$ on the elements of the partition  which corresponds to the time reversal of the flow. This involution extends to finite words and, by a simple geometric argument, we have that $\kappa_w^{-1}  = \kappa_{\iota w}$ (see, e.g., the construction of the coding in \cite{DougallSharp:2015}). Hence,  
for $g \in \Gamma$, this implies that 
\begin{align*}
\G_R(\X_{gN})(x,\id)  & \asymp \sum_{w : \kappa_w \in g^{-1}N}  e^{ - \delta(N)d\left(\kappa_w(\mathbf{o}),\mathbf{o}\right)} = \sum_{w : \kappa_{\iota w} \in gN}   e^{ - \delta(N)d\left(\kappa_{\iota w}(\mathbf{o}),\mathbf{o}\right)} \asymp 
\G_R(\X_{\id})(x,gN) .
\end{align*}

\medskip
\noindent\textsc{Step 3. Identification of conformal measures.}  We now show that there is a canonical bijection between $\delta(N)$-conformal measures on $\partial X$ with respect to $N$ and $\varphi$-conformal measures on $\Sigma\times G$. 
Furthermore, as the definition of conformality in both cases precisely describes the behaviour along $G$ and $T$-orbits, respectively, the topological transitivity implies that a conformal measure is uniquely determined by its action on the $V_i$.

For a given $\delta(N)$-conformal measures on $\partial X$ with respect to $N$ the identity 
 \eqref{eq:N-conformal measure} holds by definition for $s= \delta(N)$, any $g \in N$ and each Borel set $A \subset \partial X$. Define $\widetilde{m}|_{V_i \times \{\id\}} := m$. In particular it follows for any finite word $w$ with $\kappa_w \in N$ and each Borel subset $(A,i) \subset w$ with $T^{|w|}(A,i) \subset V_j$  from \eqref{eq:geometric potential} that  
\begin{align*} \widetilde{m}\left(T^{|w|}((A,i) \times \{\id\})\right)  & =   \widetilde{m}\left((\kappa^{-1}_w(A),j)  \times \{\id\}\right) =  m\left(\kappa^{-1}_w(A)\right)\\
& =  \int_A \left((\kappa^{-1}_w)'\right)^{\delta(N)} dm = \int_{(A,i)} e^{\varphi(\eta,i)} d \widetilde{m}.
\end{align*} 
Hence, $ \widetilde{m}$ is conformal with respect to those branches of $T$ which start and end in $\Sigma \times \{\id\}$. Now assume that $[w,g]$ is a cylinder in $\Sigma \times G$. By topologically transitivity, there exists a cylinder $[v,\id]$ and $n \in \N$  such that $T^n([v,\id]) \supset  [w,g]$ and $T^n|_{[v,\id]}$ is injective. Then, as it easily can be verified (see, e.g., \cite{Stadlbauer:2019}),  
\[ d\widetilde{m}(A \cap [w,g]) := \int_{ [v,\id] \cap T^{-n}\left(A \cap [w,g]\right) } e^{-S_n(x)} d\widetilde{m}(x,\id) \]
extends $\widetilde{m}$ to a well-defined and conformal measure on $\Sigma \times G$. 

We now show the reverse direction. On order to do so, fix a conformal measure $m$ on $\Sigma \times G$ and set, for each $V_i$, $m_i := m|_{V_i \times \{\id\}}$. Moreover, assume that $g \in N$  and that $A$ is an open subset of $\partial X$ such that $A \subset V_i$ and $g(A) \subset V_j$ for some $1\leq i,j \leq n$. Hence, $A \subset V_i \cap g^{-1}(V_j)$ and, as the coding is topologically transitive, there exist $h \in \Gamma$ and  $1\leq k \leq n$ with 
\[ B:= h(V_k) \subset A, \quad h(U_k) \supset U_i \cup g^{-1} (U_j). \]
That is, there are open subset of $U_i \times B$ and $U_j \times g(B)$ which eventually flow into $h^{-1}(U_k) \times h^{-1}(V_k)$ and $(gh)^{-1}(U_k) \times (gh)^{-1}(V_k)$, respectively. Hence, there exist $s,t \in \N$ such that locally $T^s$ and $T^t$ are of the form 
\begin{align*}
T^s: & (B,i) \times \{\id_G \} \to V_k \times \{ h N \}, \; ((x,i), \{\id_G \}) \mapsto  ((h^{-1}(x),k), hN), \\
T^t: & (g(B),j) \times \{\id_G \} \to V_k \times \{ h N \}, \; ((x,j), \{\id\}_G) \mapsto  ((h^{-1}g^{-1} (x),k), hN),
\end{align*}
where we have used that  $ghN = hN$. 
Hence, by conformality of $m$ and the cocycle property of the Busemann function, it follows for any integrable function $f:B \to \R$ that
\begin{align}
\label{eq:conformality-without-g} \int f dm_i  & = \int \cL^s(\1_{(B,i) \times \{\id\}}f) dm = \int_{V_k \times \{hN\}} f(hx) e^{-\delta(N) B_{hx}  (h(\mathbf{o}), \mathbf{o}) }  d m,\\
\nonumber \int f\circ g^{-1}  dm_j & =  \int \cL^t(\1_{(gB,j) \times \{\id\}}f\circ g^{-1}) dm 
 =    \int_{V_k \times \{hN\}} f(hx) e^{-\delta(N) B_{ghx}  (gh(\mathbf{o}), \mathbf{o}) }  d m \\
\label{eq:conformality-with-g}  & =   \int_{V_k \times \{hN\}} f(hx) e^{-\delta(N) \left( B_{hx}  (h(\mathbf{o}), \mathbf{o}) + B_{ghx}  (g(\mathbf{o}),\mathbf{o}) \right) }  dm
.       
\end{align}
By combining \eqref{eq:conformality-without-g} with \eqref{eq:conformality-with-g}, one then obtains that 
$ dm_j(x) \circ g =  e^{- \delta(N)  B_{gx} (g(\mathbf{o}),\mathbf{o}) } dm_i(x)$ for $x \in B$. 
By transitivity of $T$ we may assume, without loss of generality, that $h \in N$ and $k=i$. Then, as $h(A) \subset B$
and $gh(A) = ghg^{-1}(gA) \subset gB$, we have that the restriction to $gB$ of some power of $\theta$ is given by $gh^{-1}g^{-1}$.  
Hence, for an integrable function 
${f}:A \to \R$, \eqref{eq:conformality-without-g} this representation of $m_j|_{gB}$ with respect to $m_i|_B$  
and
the argument in \eqref{eq:conformality-without-g} applied to $g(B)$ and $m_j$ imply that  
\begin{align*}
  \int  f  dm_i & =  \int_B f\circ h^{-1}  e^{\delta(N) B_{x}  (h(\mathbf{o}), \mathbf{o}) }  d m_i
   =  \int_B f\circ h^{-1}  e^{\delta(N) \left( B_{x}  (h(\mathbf{o}), \mathbf{o}) +  B_{gx}  (g(\mathbf{o}), \mathbf{o}) \right)}  d m_j\circ g\\
   & 
   =  \int_{g(B)} f\circ (gh)^{-1}   e^{\delta(N)  B_{x}  (gh(\mathbf{o}), \mathbf{o}) }  d m_j \\
   & 
   =  \int_{g(A)} f \circ g^{-1} e^{\delta(N) \left( B_{ghg^{-1}x}  (gh(\mathbf{o}), \mathbf{o}) -   B_{ghg^{-1}x}  (ghg^{-1}(\mathbf{o}), \mathbf{o}) \right) }
     d m_j \\
   & =   \int_{g(A)} f \circ g^{-1} e^{\delta(N)  B_{x}  (g^{-1}(\mathbf{o}), \mathbf{o}) } dm_j.
\end{align*}
In particular, if $g =\id$, this implies that $m_i|_{V_i \cap V_j} = m_j|_{V_i \cap V_j}$. Hence, $d{m}^\dagger(x) := dm_i(x) $ for $x \in V_i$ is a well defined measure on $\partial X$.   
Furthermore, by applying the above identity for arbitrary $g \in N$, \eqref{eq:geometric potential} shows that ${m}^\dagger$ is a $\delta(N)$-conformal measure for $N$ as defined in     
 \eqref{eq:N-conformal measure}.
 
Hence, we have shown that $m \mapsto (\tilde{m}(\Sigma\times\{\id\}))^{-1} \tilde{m}$ is a bijection from the set of  $\delta(N)$-conformal measures for $N$ on $\partial X$ to the set of $\varphi$-conformal measures on $\Sigma \times G$ which are normalised by giving measure 1 to $\Sigma\times\{\id\}$, and that the inverse of this map is given by $m \mapsto ({m}^\dagger(\partial X))^{-1} {m}^\dagger$.    

\medskip
\noindent\textsc{Step 4. Relating  $\widetilde\varphi$- and $\varphi$-conformal measures.}
Recall that the reference measure on $\Sigma$ is given by $d\nu = hd\mu$, where $hd\mu$ is the equilibrium state for the potential $\varphi$. As it is well known, $\nu$ is the unique conformal measure with respect to potential $\widetilde\varphi(\eta,i) = \varphi(\eta,i) + \log h(\eta,i)  - \log h(\theta(\eta,i))  - \log \lambda$, and the transfer operatores $L_\mu$ and $L_\nu$ of $\mu$ and $\nu$, respectively, are related through $ \lambda h L_\nu (f) = L_\mu(h f)$. 
It then follows immediately from the definitions, that this relation extends to $ \lambda h \cL_\mu (f) = \cL_\nu(h f)$
on the level of group extensions where we silently extended $h$ to a function on $\Sigma \times G$. Hence, as 
\[ \int f \cL_\mu^\ast (dm) = \int \cL_\mu(f) dm = \lambda \int \frac{\cL_\mu(h f/h)}{\lambda h} h dm = \lambda \int \frac{f}{h} \cL_\nu^\ast(hd m) \]
for any continuous function $f$ with compact support and each $\sigma$-finite measure $m$, one obtains that   $ dm \mapsto hdm$ defines a bijection between the space of conformal measures with respect to $\varphi$ and $\widetilde\varphi$, respectively.  
\end{proof}

We remark that Theorem \ref{theo:geometric-application} is related to conformal measures associated to ends of hyperbolic $n$-manifolds as introduced in \cite{AndersonFalkTukia:2007}. In there, the authors construct for an arbitrary hyperbolic $n$-manifold a finite family of open sets such that each  $\alpha$-conformal measure can be represented as a sum of conformal measures, where each of this measures is associated to one of these open sets. Hence, our result in here might be seen as a refinement of the above for regular covers as we obtain a complete description of the set of $\delta(N)$-conformal measures. In particular, the above  shows that these ends could be replaced through an iterative construction by $\partial G$, provided that $\Gamma/N$ is word hyperbolic.    

Moreover, Shwartz recently obtained a similar result for regular covers of cocompact Fuchsian groups and with respect to $\alpha$-conformal measures  with $\alpha > \delta(N)$ (see \cite{Shwartz:2019}). This restriction is a consequence of the version of Shwartz of Ancona's inequality which does not allow to include the critical parameter.     
On the other hand, by applying Theorem 4.1 in  \cite{Shwartz:2019a} to our setting, we obtain as by Shwartz in \cite{Shwartz:2019} an ergodic theoretic description of these minimal measures as a corollary. In order to do so, we recall the notions of limit sets and uniform approximating sequences. The limit set of a group of isometries $\Gamma$ is defined by $\Lambda(\Gamma):= \overline{\Gamma(\{\mathbf{o}\})} \cap \partial X$, that is $\Lambda(\Gamma)$ is the set of accumulation points of the orbit $\Gamma(\{\mathbf{o}\})$ in $\partial X$. Moreover, we say that the sequence  $(g_n)$ in $\Gamma$ uniformly approximates $\eta \in \partial X$ if $\lim_{n \to \infty} g_n(\mathbf{o}) = \eta$ and there exists $C>0$, depending on $(g_n)$, such that the distance between the geodesic ray from $\mathbf{o}$ to $\eta$ is bounded from above by $C$.

\begin{corollary} Under the assumptions of Theorem \ref{theo:geometric-application}, the following holds. 
\begin{enumerate}
\item Assume that $\mu$ is a minimal, $\delta(N)$-conformal measure for $N$. Then $\mu$ is ergodic for the action of $N$ on $\partial X$ and there exists $\sigma \in \partial(\Gamma/N)$ such that for $\mu$-a.e. $\eta \in \Lambda(N)$, and for every $(g_n)$ in $\Gamma$ which uniformly approximates $\eta$, we have that  $\lim_{n \to \infty} g_nN = \sigma$. 
\item Assume that $\sigma \in \partial(\Gamma/N)$. Then there exists a unique $\delta(N)$-conformal measure $\mu$ for $N$ such that for $\mu$-a.e. $\eta \in \Lambda(N)$, and for every $(g_n)$ in $\Gamma$ which uniformly approximates $\eta$, we have that  $\lim_{n \to \infty} g_nN = \sigma$. 
\end{enumerate}
\end{corollary}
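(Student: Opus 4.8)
The plan is to deduce the corollary from Theorem~4.1 of \cite{Shwartz:2019a} --- which provides an ergodic theoretic description of minimal conformal measures on transient Markov shifts --- together with the explicit dictionary between conformal measures on $\partial X$ and on $\Sigma\times G$ set up in the proof of Theorem~\ref{theo:geometric-application}. First I would recall from Steps~3 and~4 of that proof the two bijections ($dm\mapsto h\,dm$, followed by the identification of Step~3) between $\delta(N)$-conformal probability measures for $N$ on $\partial X$ and normalised conformal measures on $\Sigma\times G$. Both are affine and order preserving, so they carry extreme points to extreme points; hence a $\delta(N)$-conformal measure $\mu$ for $N$ is minimal if and only if its image $\widehat\mu$ on $\Sigma\times G$ is minimal, and in that case Theorem~\ref{theo:geometric-boundary}(i) yields a unique $\sigma\in\partial G$ with $\widehat\mu=c\,\mu_\sigma$, where $\mu_\sigma$ corresponds to the point $\omega=\Xi(\sigma)\in\mathcal M_r$.

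For part~(i) I would first check ergodicity directly on $\partial X$: if $A$ were an $N$-invariant Borel set with $0<\mu(A)<1$, then $\mu|_A$ again satisfies \eqref{eq:N-conformal measure} (since $A\cap g(A')=g(A\cap A')$ for $g\in N$) and $\mu|_A\le\mu$, so minimality forces $\mu|_A=t\mu$; testing against $A$ gives $t=1$, and then $\mu(\partial X\setminus A)=\mu(A\cap(\partial X\setminus A))=0$, a contradiction. Next I would apply Theorem~4.1 of \cite{Shwartz:2019a} to $\widehat\mu=c\,\mu_\sigma$: for $\widehat\mu$-a.e.\ $(x,g)$ the orbit $T^n(x,g)=(\theta^n x,\,g\kappa^n(x))$ converges in $\mathcal M_r$ to $\omega$, which by Proposition~\ref{prop:Xi_is_well_defined} means exactly that $g\kappa^n(x)\to\sigma$ in $\partial G$. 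Projecting $\widehat\mu|_{\Sigma\times\{\id\}}$ back to $\partial X$ through the coding of Step~1 --- under which, up to $N$-translates that are invisible in $G$ because $N$ is normal, $\Sigma$ is a subset of $\partial X$ --- this gives, for $\mu$-a.e.\ $\eta\in\Lambda(N)$, a sequence $(\kappa^n(x))$ in $\Gamma$ with $\kappa^n(x)N\to\sigma$. By Step~2 of the proof of Theorem~\ref{theo:geometric-application}, $\kappa^n(x)(\mathbf{o})$ lies in a uniformly bounded shadow of $\mathbf{o}$ containing $\eta$, so $(\kappa^n(x))$ is itself a uniform approximating sequence for $\eta$. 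Finally, any two uniform approximating sequences of the same $\eta$ fellow-travel the ray $[\mathbf{o},\eta)$ at bounded distance; since $\Gamma$ is convex-cocompact the orbit map $g\mapsto g(\mathbf{o})$ is a quasi-isometric embedding of $\Gamma$ into $X$, and $\Gamma\to G$ is coarsely Lipschitz for the word metrics, so bounded distance along the ray descends to bounded distance in $G$; hence every uniform approximating sequence $(g_n)$ for a $\mu$-typical $\eta$ satisfies $g_nN\to\sigma$. (Equivalently, the $\mu$-a.e.\ defined map $\eta\mapsto\lim_n g_nN$ is $N$-invariant, hence $\mu$-a.e.\ constant by ergodicity, and its value is identified with $\sigma$ via Theorem~4.1 of \cite{Shwartz:2019a}.)

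For part~(ii), given $\sigma\in\partial G$ I would take the minimal measure $\mu_\sigma$ on $\Sigma\times G$ from Theorem~\ref{theo:geometric-boundary}(i), transport it back to a $\delta(N)$-conformal probability measure $\mu$ for $N$ via the inverse of the bijections of Steps~3--4, and observe that $\mu$ is minimal, so by part~(i) it has the asserted a.e.\ convergence property with limit $\sigma$. For uniqueness among \emph{all} $\delta(N)$-conformal measures with this property, I would take any such $\mu'$, pass to $\widehat{\mu'}$ on $\Sigma\times G$, use the integral representation $\widehat{\mu'}=\int_{\partial G}\mu_\tau\,d\nu(\tau)$ of Theorem~\ref{theo:geometric-boundary}(ii), and apply Theorem~4.1 of \cite{Shwartz:2019a} componentwise to conclude that the a.e.\ limit map attached to $\widehat{\mu'}$ pushes $\widehat{\mu'}$ onto a measure equivalent to $\nu$; the hypothesis that this limit equals $\sigma$ almost surely then forces $\nu=\delta_\sigma$, i.e.\ $\widehat{\mu'}\propto\mu_\sigma$ and $\mu'=\mu$ after normalisation.

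The hard part will be the geometric translation in part~(i): matching the symbolic group elements $\kappa^n(x)$ attached to a $\mu$-typical $\eta$ with an arbitrary uniform approximating sequence for $\eta$. This relies on the shadowing property of the coding of \cite{ConstantineLafontThompson:2020} (a consequence of the return time being bounded away from $0$ and $\infty$, already exploited in Step~2 of Theorem~\ref{theo:geometric-application}) and on the descent of fellow-travelling in $X$ to convergence in $\partial G$ along $\Gamma\to\Gamma/N$; once these geometric facts are in place, the remainder is bookkeeping with the affine bijections and with the cited results of Shwartz, exactly as in \cite{Shwartz:2019}.
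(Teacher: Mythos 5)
Your proposal is correct and follows essentially the same route as the paper: ergodicity from minimality, identification of $\mu$ with $\mu_\sigma$ via Theorems \ref{theo:geometric-boundary} and \ref{theo:geometric-application}, Theorem 4.1 of \cite{Shwartz:2019a} for the a.e.\ convergence of orbits, and the same fellow-travelling/proper-discontinuity argument to pass from the symbolic sequence $(\kappa^n(x))$ to an arbitrary uniform approximating sequence. Your explicit uniqueness argument in (ii) via the integral representation of Theorem \ref{theo:geometric-boundary}(ii) merely spells out what the paper leaves implicit in its appeal to the bijection with minimal measures.
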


\begin{proof} We begin with the proof of the first part. The ergodicity of  $\mu$ is an immediate consequence of minimality as any $G$-invariant set  $A \subset \partial X$ defines a $\delta(N)$-conformal measure $dm:= \mathbf{1}_A d\mu$ with $m \leq \mu$. Hence, $\mu(A) = 0 $ or $\mu(A) = 1$. 

In order to show convergence, we employ Theorems \ref{theo:geometric-boundary} and \ref{theo:geometric-application} as they imply that there exists $\sigma \in \partial(\Gamma/N)$ such that $\mu = \mu_\sigma$. By 
Theorem 4.1 in  \cite{Shwartz:2019a} and the coding constructed in the proof of the theorem, it then follows  for almost every element in $((\eta,i),\id) \in  V_i \times \{\id\}$ that the orbit $(T^n((\eta,i),\id))$ converges to the element $\Xi^{-1}(\sigma)$ in the Martin boundary (cf. Theorem \ref{theo:continuidade} for the definition of $\Xi$). However, by Theorem \ref{theo:geometric-boundary} above, this implies that the second coordinate of $(T^n((\eta,i),\id))$ converges to $\sigma \in  \partial(\Gamma/N)$. It hence remains to relate this convergence with uniform approximation.   

In order to do so, set $g_n :=\kappa_{\theta^{n-1}(\eta,i)}\circ \cdots \circ \kappa_{(\eta,i)}$ and choose an element $\xi \in U_i$. Then, by the coding construction, the geodesic $(\xi,\eta)$ from $\xi$ to $\eta$ passes through the closure of $g_n(\bigcup_j R_j)$, where the $R_j$ refer to the atoms of the coding construction in the proof above. However, as $\Gamma$ is convex-cocompact, the diameter of the projection of $\bigcup_j R_j$ to $X$ is finite. Therefore, $(g_n(\mathbf{o}))$ stays within a bounded distance from $(\xi,\eta)$ and converges to $\eta$. By combining this observation with the above convergence, one then obtains that $(g_n(\mathbf{o}))$ uniformly approximates $\eta$ and that $g_nN \to \sigma$ almost surely.

It is left to prove the claim for an arbitrary sequence $(h_n)$ which uniformly approximates $\eta$. As the return time to $\bigcup_j R_j$ is bounded from above, it follows that $\sup_n d_X(g_n(\mathbf{o}),g_{n+1}(\mathbf{o})) < \infty$. In particular, there exists a sequence $(n_k)$ such that $\sup_k d_X(h_k(\mathbf{o}),g_{n_k}(\mathbf{o})) < \infty$. Hence, as  $\Gamma$ acts discontinuously on $X$, the set $\{ h_k^{-1}g_{n_k} : k \in \N\}$ is finite. Therefore,  $h_kN$ and $g_{n_k}N$ stay within a bounded distance with respect to the theorem.

The second part of the theorem is consequence of Part (i) combined with the fact that there is a bijection between $\partial(\Gamma/N)$ and the set of minimal, $\delta(N)$-conformal measures.
\end{proof}

\section{Appendix: Reduced measures and the domination principle}\label{sec:appendix}
In this part, following the exposition in \cite{Woess:2009}, well-known ideias from potential theory for Markov operators are adapted to the setting of Ruelle operators on locally compact shift spaces (see also \cite{Shwartz:2019a}).
We begin with the following version of the Riesz decomposition theorem. Throughout this part, we assume that the potential is transient.
\begin{proposition}[Lemma 3.2 in  \cite{Shwartz:2019a}]
Let $\mu$ be a $1/r$-excessive measure for $0< r \leq R$. Then there exists a unique pair of Radon measures
$\mu_0$ and $\nu$ such that $\mu_0$ is $1/r$-conformal and $\mu = \mu_0 + \G_{r}^\ast(\nu)$ in the sense that
\[ \int f d\mu  =   \int f d\mu_0 + \int  \G_{r}(f)(z) d\nu(z)  \]
for any continuous $f$  with compact support. Moreover, $\nu = \mu - r^{-1} \cL^\ast(\mu)$.
\end{proposition}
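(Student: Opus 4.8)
The plan is to adapt the classical Riesz decomposition for excessive measures. Set $\nu := \mu - r^{-1}\cL^\ast(\mu)$, which is a nonnegative Radon measure precisely because $\mu$ is $1/r$-excessive (i.e. $r^{-1}\cL^\ast(\mu) \leq \mu$). The candidate for the conformal part is obtained by iteration: formally we want $\mu_0 = \lim_{n\to\infty} r^{-n}(\cL^\ast)^n(\mu)$. First I would show this limit exists. Applying $r^{-1}\cL^\ast$ repeatedly to the inequality $r^{-1}\cL^\ast(\mu) \leq \mu$ gives a decreasing sequence $r^{-n}(\cL^\ast)^n(\mu)$ of Radon measures; since $\cL^\ast$ is a positive operator, the sequence is monotone nonincreasing in each coordinate, hence converges to a Radon measure $\mu_0$ (one should check local finiteness is preserved, which follows since each term is dominated by $\mu$ and the $\cL^\ast$-iterates of a locally finite measure remain locally finite on the compact cylinders by the Gibbs-Markov structure). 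By construction $r^{-1}\cL^\ast(\mu_0) = \mu_0$, i.e. $\mu_0$ is $1/r$-conformal.

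Next I would establish the telescoping identity. For any continuous $f$ with compact support, summing the relation $r^{-k}(\cL^\ast)^k(\mu) - r^{-(k+1)}(\cL^\ast)^{k+1}(\mu) = r^{-k}(\cL^\ast)^k(\nu)$ over $k = 0, \dots, n-1$ yields
\[
\int f\,d\mu - \int f\,d\big(r^{-n}(\cL^\ast)^n(\mu)\big) = \sum_{k=0}^{n-1} \int f\,d\big(r^{-k}(\cL^\ast)^k(\nu)\big) = \sum_{k=0}^{n-1} r^{-k}\int r^k\cL^k(f)\,d\nu,
\]
wait—more precisely $\int f\, d(r^{-k}(\cL^\ast)^k(\nu)) = r^{-k}\int \cL^k(f)\, d\nu$, and since $\G_r(f) = \sum_{k\geq 0} r^k \cL^k(f)$ one must be careful that the factor is $r^k$, not $r^{-k}$; I would recheck the sign convention of $\cL^\ast$ versus the action on measures and align it with the definition $\G_r^\ast(\nu)(f) = \int \G_r(f)\,d\nu = \sum_k r^k \int \cL^k(f)\,d\nu$. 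Letting $n\to\infty$, the left-hand boundary term converges to $\int f\,d\mu_0$ by the first step, and the right-hand side converges (by transience, which guarantees $\G_r(\1_A)$ is finite for compact $A$, hence the series defining $\G_r^\ast(\nu)$ converges when tested against compactly supported $f$) to $\int \G_r(f)\,d\nu$. This gives $\mu = \mu_0 + \G_r^\ast(\nu)$.

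For uniqueness, suppose $\mu = \mu_0' + \G_r^\ast(\nu')$ is another such decomposition. Applying $r^{-1}\cL^\ast$ and using both conformality of $\mu_0'$ and the identity $r^{-1}\cL^\ast \G_r^\ast(\nu') = \G_r^\ast(\nu') - \nu'$ (which follows from $\G_r = \id + r\cL\G_r$ by dualizing), one recovers $\nu' = \mu - r^{-1}\cL^\ast(\mu) = \nu$, and then $\mu_0' = \mu - \G_r^\ast(\nu) = \mu_0$. The main obstacle I anticipate is the analytic bookkeeping at $r = R$: establishing that $r^{-n}(\cL^\ast)^n(\mu)$ stays locally finite and that the series $\G_r^\ast(\nu)$ converges against compactly supported continuous functions requires the transience hypothesis (Proposition~\ref{prop:finiteness of G_R}) and a careful argument that the $\sigma$-finiteness is not destroyed in the limit; the rest is routine telescoping. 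Since the statement is attributed to Lemma 3.2 in \cite{Shwartz:2019a}, I would also simply cite that argument and only indicate the adaptations needed for the $r=R$ case and the operator-valued setting here.
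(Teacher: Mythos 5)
Your proposal is correct in substance, but it is not quite the route the paper takes: for existence the paper simply quotes Lemma 3.2 of \cite{Shwartz:2019a} and only proves uniqueness, by applying the operator $\cL^\ast$ (suitably scaled) to two decompositions and using the resolvent identity that sends $\G_r^\ast(\nu)$ to $\G_r^\ast(\nu)-\nu$ --- which is exactly your uniqueness step. Your existence argument is the classical Riesz iteration and it does work here: with the paper's definition of $1/r$-excessive, $\cL^\ast\mu\leq r^{-1}\mu$, i.e. $r\cL^\ast\mu\leq\mu$, the measures $r^n(\cL^\ast)^n\mu$ decrease (each is dominated by $\mu$, hence has a density $\leq 1$ with respect to $\mu$, and the densities decrease, so the limit $\mu_0$ is a Radon measure), $\mu_0$ is $1/r$-conformal, and telescoping with $\nu:=\mu-r\cL^\ast\mu$ gives $\mu-r^n(\cL^\ast)^n\mu=\sum_{k=0}^{n-1}r^k(\cL^\ast)^k\nu$, whose limit tested against a nonnegative compactly supported continuous $f$ is $\int\G_r(f)\,d\nu$ by monotone convergence; note the series converges automatically because the partial sums are bounded by $\int f\,d\mu$, so transience is not actually needed at this point. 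The sign ambiguity you flag is genuine but it sits in the statement rather than in your scheme: with the definition of $1/r$-excessive and $1/r$-conformal used in Sections 4 and 6 of the paper ($\cL^\ast m\leq r^{-1}m$, resp. $\cL^\ast m=r^{-1}m$), the correct formulas are $\nu=\mu-r\cL^\ast\mu$ and $r\cL^\ast\G_r^\ast(\nu)=\G_r^\ast(\nu)-\nu$, so the factors $r^{-1}$ appearing in the proposition and in its uniqueness argument should be $r$; once you align the conventions this way, the exponents $r^k$ in your telescoping match the definition $\G_r^\ast(\nu)(f)=\sum_k r^k\int\cL^k(f)\,d\nu$ exactly, and no further adjustment is required. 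In short, your route buys a self-contained existence proof in the operator-valued setting, while the paper's route buys brevity by outsourcing existence to Shwartz and keeping only the short uniqueness computation.
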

\begin{proof} The existence follows from Lemma 3.2 in  \cite{Shwartz:2019a}. Now assume that $\mu_0 + \G_{r}^\ast(\nu) = \tilde{\mu}_0 + \G_{r}^\ast(\tilde{\nu})$. By applying
 $r^{-1}\cL^\ast$ to both sides, it follows that
 \[  \mu_0 + \G_{r}^\ast(\nu) - \nu = \tilde{\mu}_0 + \G_{r}^\ast(\tilde{\nu}) - \tilde{\nu}.\]
 Hence, $\nu = \tilde{\nu}$.
\end{proof}
Recall that $\mu \leq \nu$ if $\mu(A) \leq \nu(A)$ for all $A \in \mathcal{B}$. The
Riesz decomposition theorem has the following useful consequences.
\begin{lemma} \label{lem:dominated by a potential} Assume that $\mu$ is $1/r$-excessive and that $\mu \leq \G_{r}^\ast(\nu)$ for a measure $\nu$ such that $\G_{r}^\ast(\nu)$ is $\sigma$-finite. Then there exists $\nu_0$ such that $\mu = \G_{r}^\ast(\nu_0)$. In particular, if $\mu$ is harmonic, then $\mu=0$.
\end{lemma}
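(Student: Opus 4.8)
The plan is to reduce everything to the Riesz decomposition in the preceding proposition and then show that the conformal (harmonic) part of $\mu$ is forced to vanish by iterating against the potential bound. Concretely, first I would apply that proposition to the $1/r$-excessive measure $\mu$ to write $\mu = \mu_0 + \G_{r}^\ast(\nu_0)$, where $\mu_0$ is $1/r$-conformal and $\nu_0 = \mu - r^{-1}\cL^\ast(\mu)$ is a Radon measure. The whole statement then reduces to the claim $\mu_0 = 0$: this gives at once $\mu = \G_{r}^\ast(\nu_0)$, and in the harmonic case $\cL^\ast(\mu) = r^{-1}\mu$ forces $\nu_0 = 0$, hence $\mu = \mu_0 = 0$.

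To prove $\mu_0 = 0$, I would use that $\cL$ maps nonnegative functions to nonnegative functions, so $\cL^\ast$ is monotone on Radon measures. From $\mu_0 \leq \mu \leq \G_{r}^\ast(\nu)$ we get $r^m (\cL^m)^\ast(\mu_0) \leq r^m (\cL^m)^\ast(\G_{r}^\ast(\nu))$ for every $m \in \N$. On the left, $1/r$-conformality of $\mu_0$ gives $(\cL^m)^\ast(\mu_0) = r^{-m}\mu_0$, so the left-hand side is exactly $\mu_0$. On the right, expanding $\G_{r}^\ast(\nu) = \sum_{k \geq 0} r^k (\cL^k)^\ast(\nu)$ and interchanging $(\cL^m)^\ast$ with the sum (legitimate by monotone convergence, all terms being nonnegative), one obtains
\[ r^m (\cL^m)^\ast(\G_{r}^\ast(\nu)) = \sum_{k \geq m} r^k (\cL^k)^\ast(\nu) = \G_{r}^\ast(\nu) - \sum_{k=0}^{m-1} r^k (\cL^k)^\ast(\nu). \]
Now let $m \to \infty$: since $\G_{r}^\ast(\nu)$ is $\sigma$-finite, for any continuous $f \geq 0$ with compact support we have $\sum_{k \geq 0} r^k \int f\, d(\cL^k)^\ast(\nu) = \int f\, d\G_{r}^\ast(\nu) < \infty$, so the tail $\sum_{k \geq m} r^k \int f\, d(\cL^k)^\ast(\nu)$ tends to $0$. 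Combining with the displayed identity, $0 \leq \mu_0(f) \leq \sum_{k \geq m} r^k \int f\, d(\cL^k)^\ast(\nu) \to 0$, hence $\mu_0(f) = 0$ for all such $f$, i.e.\ $\mu_0 = 0$. This completes the argument, with $\nu_0$ as produced by the Riesz decomposition.

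The only genuinely delicate point is the bookkeeping around $\sigma$-finiteness: one must be sure that $(\cL^m)^\ast$ commutes with the infinite sum defining $\G_{r}^\ast(\nu)$ and that the vague convergence $r^m (\cL^m)^\ast(\G_{r}^\ast(\nu)) \to 0$, obtained by testing against compactly supported functions, is enough to kill $\mu_0$. Both are handled by nonnegativity (monotone convergence / Tonelli) together with the fact that Radon measures on this locally compact shift space are determined by their action on continuous compactly supported functions; there is no deeper obstruction, only the need to phrase the limit in the vague topology rather than pretending the measures are finite.
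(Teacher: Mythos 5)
Your argument is essentially the paper's own proof: decompose $\mu = \mu_0 + \G_{r}^\ast(\nu_0)$ by the Riesz proposition, kill the conformal part via $\mu_0 = r^m(\cL^m)^\ast(\mu_0) \leq r^m(\cL^m)^\ast(\G_{r}^\ast(\nu)) \to 0$, and read off both conclusions (the harmonic case via uniqueness of the decomposition). The one caveat is your justification of the limit: $\sigma$-finiteness of $\G_{r}^\ast(\nu)$ does not imply $\int f\, d\G_{r}^\ast(\nu) < \infty$ for compactly supported continuous $f$ (a $\sigma$-finite measure need not be locally finite), so the tail estimate should instead be run on measurable sets $A$ with $\G_{r}^\ast(\nu)(A) < \infty$, which cover the space by $\sigma$-finiteness; since $\mu_0(A) \leq \sum_{k \geq m} r^k (\cL^k)^\ast(\nu)(A) \to 0$ for each such $A$, one still concludes $\mu_0 = 0$, and the structure of your proof is unaffected.
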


\begin{proof}  By the above, $\mu = \mu_0 + \G_{r}^\ast(\nu_0)$. Therefore,
$\mu_0 \leq \G_{r}^\ast(\nu)$ and  $\mu_0=0$ since
\[\mu_0 = r^{-n}(\cL^n)^\ast(\mu_0) \leq r^{-n}(\cL^n)^\ast(\G_{r}^\ast(\nu)) \to 0.\]
The assertion then follows from the uniqueness of the Riesz decomposition.
\end{proof}

For a family of $\sigma$-finite measures $\{\mu_i : i \in I\}$, define $\mu^\dagger(A):= \inf_{i\in I} \mu_i(A)$ and
\[ \bigwedge_{i\in I} \mu_i(A) := \inf\left\{ \sum_{j=1}^\infty \mu^\dagger(B_j): \bigcup_{j=1}^\infty B_j = A, B_j \in \mathcal{B} \right\}, \quad \hbox{ for } A\in \mathcal{B}.\]
We then refer to $\bigwedge_{i\in I} \mu_i(A)$ as the infimum of the family $\{\mu_i : i \in I\}$.
\begin{proposition} The infimum of a family of Radon measures is a Radon measure. Moreover, the infimum of a family of $\lambda$-excessive measures is $\lambda$-excessive.
\end{proposition}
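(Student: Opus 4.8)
The plan is to prove the two assertions — that $\bigwedge_{i\in I}\mu_i$ is a Radon measure, and that it is $\lambda$-excessive when each $\mu_i$ is — in sequence, the first being a measure-theoretic verification and the second being the substantive point.

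First I would check that $\bigwedge_{i\in I}\mu_i$ is an outer measure: monotonicity and countable subadditivity follow directly from the definition as an infimum over countable covers (the standard Carathéodory-type argument, using that $\mu^\dagger$ is monotone). Then I would show every $B\in\mathcal{B}$ is Carathéodory-measurable, so that the restriction to $\mathcal{B}$ is a genuine measure; here one uses that each $\mu_i$ is already a Borel measure, so $\mu^\dagger$ is additive on disjoint Borel sets and the splitting $\mu^\dagger(A\cap B)+\mu^\dagger(A\cap B^c)\le \mu^\dagger(A)$ propagates to $\bigwedge_i\mu_i$ through the covering infimum. Local finiteness (hence the Radon property on the locally compact $\Sigma\times G$) is immediate since $\bigwedge_i\mu_i\le\mu_{i_0}$ for any fixed $i_0\in I$, and the latter is Radon; inner/outer regularity then descends from $\mu_{i_0}$ by sandwiching. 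I would also record the evident fact that $\bigwedge_i\mu_i$ is the largest measure dominated by every $\mu_i$, which is what makes it useful.

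For the excessiveness, fix the family $\{\mu_i\}$ with $\cL^\ast(\mu_i)\le\lambda\mu_i$ for all $i$, and write $m:=\bigwedge_i\mu_i$. The goal is $\cL^\ast(m)\le\lambda m$. Since $m\le\mu_i$ and $\cL^\ast$ is positive (monotone) on measures, $\cL^\ast(m)\le\cL^\ast(\mu_i)\le\lambda\mu_i$ for each $i$; because $\cL^\ast(m)$ is a single measure dominated by every $\lambda\mu_i$, and $\lambda m=\bigwedge_i\lambda\mu_i$ is by construction the greatest lower bound of the family $\{\lambda\mu_i\}$, we conclude $\cL^\ast(m)\le\lambda m$. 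This also shows $\cL^\ast(m)$ is absolutely continuous with respect to $m$ with Radon–Nikodym derivative $\le\lambda$, which is the precise meaning of $\lambda$-excessive used earlier in the paper.

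The one point requiring care — and the place I expect the only real friction — is the claim that $\lambda m$ is the infimum of $\{\lambda\mu_i\}$ in the ordering of measures, i.e. that an arbitrary measure $\eta$ with $\eta\le\mu_i$ for all $i$ satisfies $\eta\le m$. This follows by unwinding the definition: for $A\in\mathcal{B}$ and any countable Borel partition $A=\bigsqcup_j B_j$ one has $\eta(A)=\sum_j\eta(B_j)\le\sum_j\mu^\dagger(B_j)$, and taking the infimum over such partitions gives $\eta(A)\le m(A)$. Monotonicity of $\cL^\ast$ on $\sigma$-finite measures should be quoted or quickly verified from $\cL^\ast(\mu)(f)=\int\cL(f)\,d\mu$ together with $\cL(f)\ge 0$ for $f\ge 0$; everything else is bookkeeping.
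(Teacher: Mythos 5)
Your proof is correct, and while the first assertion is handled with essentially the same content as the paper, your treatment of excessiveness takes a genuinely different and in fact cleaner route. For the measure statement, your Carath\'eodory-style verification boils down to the same superadditivity-under-refinement computation the paper uses; note only that $\mu^\dagger$ is merely \emph{super}additive on disjoint Borel sets, not additive as you state --- but the splitting inequality $\mu^\dagger(A\cap B)+\mu^\dagger(A\cap B^c)\le\mu^\dagger(A)$ you actually invoke is exactly this superadditivity, so no gap results, and the Radon property is obtained in both proofs from $\bigwedge_i\mu_i\le\mu_{i_0}$. For the second assertion the paper argues differently: it fixes a bounded, uniformly continuous $f\ge 0$, chooses a partition of $A$ into sets of small diameter which realizes $\bigwedge_i\mu_i(A)$ up to $\epsilon$, and uses uniform continuity to obtain $\int_A f\,d\bigl(\bigwedge_i\mu_i\bigr)\le\inf_i\int_A f\,d\mu_i$, from which excessiveness is said to follow. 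Your argument bypasses all continuity considerations: monotonicity of $\cL^\ast$ gives $\cL^\ast\bigl(\bigwedge_i\mu_i\bigr)\le\cL^\ast(\mu_i)\le\lambda\mu_i$ for every $i$, and your partition computation showing that any measure dominated setwise by every $\lambda\mu_i$ is dominated by $\lambda\bigwedge_i\mu_i$ (the greatest-lower-bound property) closes the argument directly at the level of Borel sets. This is more elementary, works for indicators rather than uniformly continuous test functions, and makes explicit the final step that the paper leaves as ``the remaining assertion easily follows''; the paper's computation, for its part, records an integral comparison for continuous functions, though that inequality already follows from the setwise domination $\bigwedge_i\mu_i\le\mu_i$. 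Do state (or quickly verify) that $\cL^\ast$ of a $\sigma$-finite measure is again a countably additive measure and that $\bigwedge_i(\lambda\mu_i)=\lambda\bigwedge_i\mu_i$, both of which are immediate, and your proof is complete.
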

\begin{proof} We begin showing that $\mu :=  \bigwedge_{i\in I} \mu_i$ is a measure. In order to do so, first observe that for a partition of $A \in \mathcal{B}$ into a $\{A_i \in \mathcal{B}: i \in \N\}$ that  $\mu(A) \leq  \sum_i \mu(A_i)$ by construction. On the other hand, observe that for partitions $\{ B_j : j \in \N \}$,  $\{ B^\ast_k : k \in \N \}$ of $A$ into Borel sets such that the second is finer than the first,
 \begin{equation}\label{eq:refinement}  \sum_{j =1}^\infty \mu^\dagger(B_j) \leq \sum_{k =1}^\infty \mu^\dagger(B^\ast_k).\end{equation}
In particular, this implies that $\mu(A) \geq  \sum_i \mu(A_i)$, as each partition of $A$ has a refinement which is measurable with respect to $\sigma(\{A_i: i \in \N\})$.  Hence, $\mu$ is $\sigma$-additive and therefore a measure. Moreover, as $\mu(A) \leq \mu_i(A)$ for all $i$, $\mu$ is $\sigma$-finite. The Radon property follows immediately from
 $\mu(A) \leq \mu_i(A)$ for all $i \in I$ and $A \in \mathcal{B}$, which proves the first assertion.

Now assume that $f:X \to [0,\infty)$ is uniformly continuous and bounded, that $A \in \mathcal{B}$ with $\mu(A) < \infty $ and that  $\epsilon$ is arbitrary. By applying \eqref{eq:refinement}, we may suppose that $\mu(A) \leq \sum_j \mu^\dagger(B_j) \leq \mu(A)+\epsilon$, where the $\{ B_j : j \in \N \}$ is a partition of $A$ into sets of diameter $\delta$. If $\delta$ is chosen sufficently small, uniform continuity implies that
 \begin{align*}\int_A f d\mu &  =  \sum_{j =1}^\infty \int_{B_j} f d\mu  \leq  \sum_{j =1}^\infty   \left(\inf_{x \in B_j} f(x) + \epsilon  \right) \mu^\dagger(B_j) \leq \sum_{j =1}^\infty  \int_{B_j} f d\mu_i + \epsilon ( \mu(A)+\epsilon) \\
 & \leq  \inf_{i\in I} \int_A f  d\mu_i + \epsilon \mu(A) + \epsilon^2.
 \end{align*}
As $\epsilon$ is arbitrary, it follows that $\int_A f d\mu \leq   \inf_i \int_A f  d\mu_i $. The remaining assertion easily follows from this.
\end{proof}

Now assume that $\mu$ is a   $\lambda$-excessive measure and that $A \in \mathcal{B}$. Then we refer to
\[ R_A(\mu) := \bigwedge \left\{\nu  \in \mathcal{R}: \cL^\ast(\nu) \leq \lambda \nu, \nu|_A \geq \mu|_A  \right\}\]
as the \textit{reduced measure} associated with $\mu$ on $A$, which is a
is a well-defined, $\lambda$-excessive Radon measure by the above Proposition. Furthermore, if $A = \Sigma \times K$, for some finite $K \subset G$, then $\G_{r}^\ast(\mu|_A)$ is well-defined, $\lambda$-excessive and $\G_{r}^\ast(\mu|_A) \geq \mu|_A$. In particular, $R_A(\mu) \leq \G_{r}^\ast(\mu|_A)$.  Hence, Lemma \ref{lem:dominated by a potential} implies that there exists $\nu_0$ such that $R_A(\mu) = \G_{r}^\ast(\nu_0)$ and, as $\nu_0 = R_A(\mu)- r^{-1} \cL^\ast(R_A(\mu))$, $\nu_0$ is a Radon measure.
Also note that $R_A(\mu)|_A = \mu|_A$ by construction. Hence, by letting $K \to G$, one immediately obtains the following.
\begin{proposition}\label{prop:approximation-by-potentials}  Assume that $\mu$ is  $1/r$-excessive. Then there exists an increasing sequence of Radon measures $(\nu_n)$ such that
$\mu(A) = \lim_{n\to \infty} \G_{r}^\ast(\nu_n) (A)$, for all $A \in \mathcal{B}$.
\end{proposition}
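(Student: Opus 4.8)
The plan is to produce $(\nu_n)$ by exhausting $G$ with finite sets and invoking the reduced–measure machinery just developed. Fix an increasing sequence of finite sets $K_1\subseteq K_2\subseteq\cdots$ with $\bigcup_nK_n=G$ and put $A_n:=\Sigma\times K_n$. If $\mu$ happens to be a potential, say $\mu=\G_r^\ast(\eta)$, one may simply take $\nu_n:=\eta|_{A_n}$; then $(\nu_n)$ is increasing and $\G_r^\ast(\eta|_{A_n})\uparrow\G_r^\ast(\eta)=\mu$ by monotone convergence (using that $\G_r^\ast$ commutes with increasing limits, being built from the positive operators $(\cL^n)^\ast$), so that $\mu(A)=\lim_n\G_r^\ast(\nu_n)(A)$ for all $A\in\mathcal B$. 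The content of the proposition is to reduce the general $1/r$-excessive $\mu$ to this case by replacing, at level $n$, the measure $\mu$ by its reduced measure $R_{A_n}(\mu)$, which by the discussion preceding the statement is already a potential, $R_{A_n}(\mu)=\G_r^\ast(\nu_n)$ for a suitable Radon measure $\nu_n$ (here transience enters, through the $\sigma$-finiteness of $\G_r^\ast(\mu|_{A_n})$ and the bound $R_{A_n}(\mu)\leq\G_r^\ast(\mu|_{A_n})$, so that Lemma~\ref{lem:dominated by a potential} applies).

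The argument I would then carry out has two halves. First, monotonicity: since $A_n\subseteq A_{n+1}$, the constraint $\nu|_{A_{n+1}}\geq\mu|_{A_{n+1}}$ forces $\nu|_{A_n}\geq\mu|_{A_n}$, so the family over which the infimum defining $R_{A_{n+1}}(\mu)$ is taken is contained in the one defining $R_{A_n}(\mu)$; hence $R_{A_n}(\mu)\leq R_{A_{n+1}}(\mu)$. Moreover $\mu$ itself is $1/r$-excessive and trivially satisfies $\mu|_{A_n}\geq\mu|_{A_n}$, so it lies in each of these families and $R_{A_n}(\mu)\leq\mu$ for all $n$. Thus the potentials $\G_r^\ast(\nu_n)=R_{A_n}(\mu)$ form an increasing sequence bounded above by $\mu$, and therefore converge to some Radon measure $m\leq\mu$.

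Second, the identification $m=\mu$, which is the only step that uses more than the order structure on Radon measures and the positivity of $\G_r^\ast$. Here I would invoke the balayage identity $R_A(\mu)|_A=\mu|_A$ recorded just before the statement: for $m'\geq n$ one has $A_n\subseteq A_{m'}$, so $R_{A_{m'}}(\mu)$ agrees with $\mu$ on $A_{m'}$ and hence on $A_n$; letting $m'\to\infty$ gives $m|_{A_n}=\mu|_{A_n}$, and since $n$ is arbitrary and $\bigcup_nA_n=\Sigma\times G$ this forces $m=\mu$. Monotone convergence then yields $\mu(A)=\lim_n\G_r^\ast(\nu_n)(A)$ for every $A\in\mathcal B$, with the approximating potentials $\G_r^\ast(\nu_n)$ increasing. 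I expect this last identification to be the main obstacle — really the only place where one has to argue rather than compute — since it is precisely the assertion that no mass is lost in the increasing limit of the reduced potentials; it rests on $R_A(\mu)|_A=\mu|_A$ and on the exhaustion $A_n\uparrow\Sigma\times G$, everything else being bookkeeping with $\leq$ on Radon measures.
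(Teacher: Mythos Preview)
Your proposal is correct and is exactly the argument the paper has in mind: the paper's proof consists solely of the paragraph preceding the proposition, where it observes that for $A=\Sigma\times K$ with $K$ finite one has $R_A(\mu)\leq \G_r^\ast(\mu|_A)$, hence $R_A(\mu)=\G_r^\ast(\nu_0)$ by Lemma~\ref{lem:dominated by a potential}, and that $R_A(\mu)|_A=\mu|_A$ by construction, so ``letting $K\to G$'' yields the claim. You have simply filled in the monotonicity and limit-identification steps that the paper leaves implicit; one small remark is that you (like the paper) establish monotonicity of the potentials $\G_r^\ast(\nu_n)=R_{A_n}(\mu)$ rather than of the $\nu_n$ themselves, which is what is actually used downstream.
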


Now assume that $A$ is measurable with respect to the partition into $n$-cylinders, for some $n \in \N$.
Then $\1_A$ is Lipschitz continuous, and, in particular, $\cL_A(f) := \cL(\1_A f)$ acts on continuous functions with compact support. Furthermore, consider
\[ \G_r^A := \1_A \sum_{n=0}^\infty r^n(\cL_A)^n, \quad \mathcal{F}_A:=  \1_A  \sum_{n=0}^\infty r^{n} (\cL_{A^c})^n .\]
Observe that this choice of $A$ implies by Proposition \ref{prop:finiteness of G_R} that $\G_r^A$ and $\mathcal{F}_A$ act on Lipschitz functions with support on $\Sigma \times K$, for $K \subset G$ finite, for $0 < r \leq R$. In particular, the actions of $(\G_r^A)^\ast$ and $\mathcal{F}_A^\ast$ on $\sigma$-finite measures are well defined.
\begin{theorem}\label{theo:reduced_measure} Assume that $\mu$ is $1/r$-excessive and that $A$ is measurable with respect to the partition into $n$-cylinders, for some $0 < r \leq R$ and $n \in \N$. Then the reduced measure on $A$ is equal to $\mathcal{F}_A^\ast(\mu)$.
\end{theorem}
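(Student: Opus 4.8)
The plan is to show the two inequalities $\mathcal{F}_A^\ast(\mu) \leq R_A(\mu)$ and $\mathcal{F}_A^\ast(\mu) \geq R_A(\mu)$ separately, after first checking that $\mathcal{F}_A^\ast(\mu)$ is a legitimate candidate in the infimum defining $R_A(\mu)$, that is, that it is a $1/r$-excessive Radon measure which dominates $\mu$ on $A$. Recall that $\mathcal{F}_A = \1_A \sum_{n \geq 0} r^n (\cL_{A^c})^n$ collects exactly those orbit pieces which leave $A$, wander through $A^c$, and return to $A$; the $n=0$ term is $\1_A$, so on $A$ the operator $\mathcal{F}_A$ restricted to functions supported on $A$ acts as the identity plus nonnegative corrections, giving $\mathcal{F}_A^\ast(\mu)|_A \geq \mu|_A$ immediately (for the $\geq$ one uses $\mathcal{F}_A^\ast(\mu)(f) = \mu(\mathcal{F}_A f) \geq \mu(\1_A f)$ when $f \geq 0$ is supported on $A$). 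The $\sigma$-finiteness and Radon property follow from Proposition \ref{prop:finiteness of G_R} exactly as in the remark preceding the theorem, since $A$ being a union of $n$-cylinders makes $\1_A$ Lipschitz and $\mathcal{F}_A$ act boundedly on Lipschitz functions with support in $\Sigma \times K$.

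For the excessivity of $\mathcal{F}_A^\ast(\mu)$, the key algebraic identity is a first-passage decomposition: an orbit piece of length one either stays in $A^c$ or not, which gives the operator identity $\cL \circ \mathcal{F}_A = \cL_{A^c} \circ \mathcal{F}_A + \cL_A(\text{something})$; more precisely one wants to show $r^{-1}\cL^\ast(\mathcal{F}_A^\ast(\mu)) \leq r^{-1}\mathcal{F}_A^\ast(\cL^\ast(\mu)) \leq r^{-1}\mathcal{F}_A^\ast(r\mu) = \mathcal{F}_A^\ast(\mu)$, using that $\mathcal{F}_A$ commutes appropriately with $\cL$ on the relevant function space and that $\mu$ is $1/r$-excessive so $\cL^\ast(\mu) \leq r\mu$. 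The cleanest route is to write $\cL = \cL_A + \cL_{A^c}$ and verify $(\cL_{A^c})\mathcal{F}_A = \mathcal{F}_A - \1_A$ (geometric-series telescoping on the $A^c$-excursions), hence $\cL \mathcal{F}_A = \cL_A \mathcal{F}_A + \mathcal{F}_A - \1_A$; dualizing and using $\cL_A^\ast(\nu) \leq \cL^\ast(\nu)$ together with $\cL^\ast(\mu)\leq r\mu$ yields the excessivity bound for $\mathcal{F}_A^\ast(\mu)$. This establishes $R_A(\mu) \leq \mathcal{F}_A^\ast(\mu)$.

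For the reverse inequality $\mathcal{F}_A^\ast(\mu) \leq R_A(\mu)$, I would take any $\nu$ in the defining family of $R_A(\mu)$, i.e.\ $\nu$ is $1/r$-excessive with $\nu|_A \geq \mu|_A$, and show $\mathcal{F}_A^\ast(\mu) \leq \nu$; taking the infimum over $\nu$ then finishes. Since $\mathcal{F}_A$ only integrates $f$ against orbit pieces that visit $A$, and on $A$ we have $\mu \leq \nu$, while excursions into $A^c$ are controlled by $\cL_{A^c}^\ast(\nu) \leq \cL^\ast(\nu) \leq r\nu$, one iterates: $\mathcal{F}_A^\ast(\mu) = \sum_n r^n (\cL_{A^c}^\ast)^n(\1_A \mu) \leq \sum_n r^n (\cL_{A^c}^\ast)^n(\1_A \nu)$, and the right-hand side is $\leq \nu$ because $\nu$ being $1/r$-excessive means $r^{-1}\cL_{A^c}^\ast$ is a contraction-type operator on the cone below $\nu$ and the telescoping sum $\sum_n r^n (\cL_{A^c}^\ast)^n \1_A$ applied to $\nu$ reassembles a sub-measure of $\nu$ (this is the dual of the identity $\mathcal{F}_A \1 \leq \1$). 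The main obstacle I anticipate is justifying the operator manipulations at the level of measures rather than functions: one must be careful that all the series converge (guaranteed by transience and Proposition \ref{prop:finiteness of G_R} for supports in $\Sigma \times K$, then pass to general $A$ by the exhaustion $K \nearrow G$), and that duality $(\cL_{A^c}^n)^\ast = (\cL_{A^c}^\ast)^n$ and the inequality $\cL_{A^c}^\ast(\nu) \leq \cL^\ast(\nu)$ hold in the Radon-measure sense; this is routine but needs the $n$-cylinder hypothesis on $A$ so that $\1_A$ is Lipschitz and all operators preserve the relevant function spaces.
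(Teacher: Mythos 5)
Your overall architecture is the same as the paper's: show that $\mathcal{F}_A^\ast(\mu)$ is a $1/r$-excessive Radon measure agreeing with $\mu$ on $A$ (hence a competitor in the infimum defining $R_A(\mu)$), and that it lies below every competitor because $\mathcal{F}_A^\ast$ depends monotonically only on the restriction to $A$ and satisfies $\mathcal{F}_A^\ast(\nu)\le\nu$ for excessive $\nu$. However, the two analytic steps that carry the theorem are wrong or unsubstantiated as you state them. The inequality $\cL^\ast(\mathcal{F}_A^\ast(\mu))\le\mathcal{F}_A^\ast(\cL^\ast(\mu))$ is false in general: it is the dual of ``$\mathcal{F}_A\circ\cL\le\cL\circ\mathcal{F}_A$'', and already in a two-state toy chain with $A$ a single state, testing it against the indicator of the state in $A^c$ reduces it to $\mu(\1_A)\le r\,\cL^\ast(\mu)(\1_A)$, i.e.\ the reverse of the excessivity inequality on $A$, which fails whenever $\mu$ is strictly excessive there (e.g.\ a potential). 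Your ``cleanest route'' also starts from a false identity: since $\mathcal{F}_A$ takes values in functions supported on $A$, one has $\cL_{A^c}\circ\mathcal{F}_A=0$, not $\mathcal{F}_A-\1_A$; the correct telescoping identity is $r\,\mathcal{F}_A\circ\cL_{A^c}=\mathcal{F}_A-\1_A$, hence $r\,\mathcal{F}_A\circ\cL=\mathcal{F}_A-\1_A+r\,\mathcal{F}_A\circ\cL_A$, which is exactly the identity the paper uses (and it is $\mathcal{F}_A\circ\cL$, not $\cL\circ\mathcal{F}_A$, whose dual is $\cL^\ast\circ\mathcal{F}_A^\ast$). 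Even from the corrected identity, excessivity does not follow by ``dualizing and using $\cL_A^\ast\le\cL^\ast$'': what must be shown is $r\,(\mathcal{F}_A\circ\cL_A)^\ast(\mu)\le\mu|_A$, i.e.\ $\sum_{k\ge0}r^{k+1}\int\1_A\cL_{A^c}^{k}\cL_A(h)\,d\mu\le\int\1_A h\,d\mu$ for $h\ge0$, and this requires the iterated first-entrance estimate (apply $\cL^\ast(\mu)\le r^{-1}\mu$ to $\1_A h$, split the result into its parts over $A$, which are kept, and over $A^c$, which are iterated, and discard the nonnegative remainder after $n$ steps). That induction is the actual content of the paper's excessivity proof and is missing from your plan.

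The same issue affects your reverse inequality. The fact you need, namely $\mathcal{F}_A^\ast(\nu)=\sum_n r^n(\cL_{A^c}^\ast)^n(\nu|_A)\le\nu$ for every $1/r$-excessive $\nu$, is true but is not ``the dual of $\mathcal{F}_A\1\le\1$'': that pointwise bound is not available (for $r\ge1$ there is no reason that $\sum_n r^n\cL_{A^c}^n(\1)\le\1$), and ``contraction-type on the cone below $\nu$'' is not an argument. The paper obtains this domination from Proposition \ref{prop:approximation-by-potentials}: write the excessive measure as an increasing limit of potentials $\G_r^\ast(\nu_n)$, use the first-entrance factorization $\G_r=\G_r^{A^c}+\G_r\circ\mathcal{F}_A$ to get $\mathcal{F}_A^\ast(\G_r^\ast(\nu_n))=(\G_r-\G_r^{A^c})^\ast(\nu_n)\le\G_r^\ast(\nu_n)$, and pass to the limit on cylinder sets; alternatively one can run an excessivity iteration as above (with $c_k=\int\1_A\cL_{A^c}^k h\,d\nu$ and $d_k=\int\1_{A^c}\cL_{A^c}^k h\,d\nu$ one gets $d_k\ge r(c_{k+1}+d_{k+1})$, whence $\sum_k r^k c_k\le c_0+d_0=\int h\,d\nu$). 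Until arguments of this kind replace your two asserted operator inequalities, the proposal has a genuine gap at precisely the two points where the theorem's content lies. (Minor points: the paper's convention is that $1/r$-excessive means $\cL^\ast(\mu)\le r^{-1}\mu$, not $\le r\mu$; and your membership check $\mathcal{F}_A^\ast(\mu)|_A\ge\mu|_A$ is fine, in fact equality holds, which the paper also uses.)
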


\begin{proof} Set $B:= A^c$. By Proposition  \ref{prop:approximation-by-potentials}, there exists a monotone sequence of Radon measures $(\nu_n)$ such that $\G_{r}^\ast(\nu_n) \to \mu$. Moreover, by decomposing orbits with respect to the first entry to $A$, one obtains that $\G_r =\G_r^{B}  + \G_r \circ \mathcal{F}_A$, which then implies that
\[ \mathcal{F}_A^\ast \circ \G_{r}^\ast(\nu_n)  = (\G_r - \G_r^{B})^\ast (\nu_n) \leq  \G_r^\ast (\nu_n).  \]
By taking the limit as $n \to \infty$, one obtains that $\mathcal{F}_A^\ast(\mu)([w,g]) \leq  \mu([w,g])$ for any cylinder set, and therefore $\mathcal{F}_A^\ast(\mu)\leq \mu$. As, by construction, $\mathcal{F}_A^\ast(\mu)|_A = \mu|_A$, it follows that $\mu|_A \leq \nu|_A$ implies that
$\mathcal{F}_A^\ast(\mu) \leq \mathcal{F}_A^\ast(\nu)$ globally. Hence, if $\mathcal{F}_A^\ast(\mu)$ is excessive, then $\mathcal{F}_A^\ast(\mu) = R_A(\mu)$.

It remains to show that $\mathcal{F}_A^\ast(\mu)$ is excessive. By iterated application of $\cL^\ast(\mu)\leq r^{-1} \mu$ one obtains  for a test function $h\geq 0$ that
\begin{align*}
\int \1_A h d\mu & \geq r \int \cL(\1_A h) d\mu  =  r \int \1_A \cL_A(h) d\mu + r \int \1_B \cL_A(h) d\mu\\
& \geq r \int \1_A \cL_A(h) d\mu + r^2 \int  \1_A \cL_B\cL_A(h) d\mu + r^2 \int  \1_B \cL_B\cL_A(h) d\mu \\
& \geq \int \1_A \sum_{k=0}^n r^{k+1} \cL_B^k\cL_A(h) d\mu + r^{n+1} \int  \1_B \cL_B^n\cL_A(h) d\mu.
\end{align*}
Hence, by monotone convergence,
\[\int \1_A h d\mu  \geq  \int \1_A \sum_{n=0}^\infty r^{n+1} \cL_B^n\cL_A(h) d\mu.\]
On the other hand,
\begin{align*}
 r \mathcal{F}_A\circ \cL & = \1_A\sum_{n=0}^\infty r^{n+1} \cL_{B}^n\circ (\cL_B + \cL_A) = \mathcal{F}_A - \1_A +   \1_A\sum_{n=0}^\infty r^{n+1} \cL_{B}^n\circ \cL_A,
\end{align*}
which implies that
\begin{align*}
r\int \cL h d\mathcal{F}_A^\ast(\mu) & = \int  h d\mathcal{F}_A^\ast(\mu) -  \int_A h - \sum_{n=0}^\infty r^{n+1} \cL_B^n\cL_A(h) d\mu \leq \int  h d\mathcal{F}_A^\ast(\mu),
\end{align*}
proving that $\mathcal{F}_A^\ast(\mu)$ is $1/r$-excessive.
\end{proof}

As $\mathcal{F}_A^\ast(\mu)$ is dominated by $G_r^\ast(\mu|_A)$, it follows from Lemma \ref{lem:dominated by a potential} that there exists a unique $\nu$ with $\mathcal{F}_A^\ast(\mu) = \G_r^\ast(\nu_A)$. If $\mu = \G_r^\ast(\nu)$ for some $\nu$, this gives rise to a map $\nu \mapsto \nu_A$, where $\nu_A$ is referred to as the \emph{balayée} of $\nu$ and can be constructed explicitly as follows. Set
\[\mathcal{R}_A (f):=
\sum_{n=0}^\infty r^{n} ( \1_{A^c}\cL)^n(\1_A f).
\]
As each orbit with at least one visit to $A$ can be decomposed either with respect to the last or the first visit to $A$, it follows that $\G_r \circ \mathcal{F}_A = \mathcal{R}_A \circ \G_r$. Therefore, the balayée  of $\nu$ is given by $ \nu_A = \mathcal{R}_A^\ast(\nu)$ as, for $\mu = \G_r^\ast (\nu)$,
\[ R_A(\mu) = \mathcal{F}_A^\ast (\mu) =  \mathcal{F}_A^\ast\circ \G_r^\ast (\nu) =   \G_r^\ast \circ \mathcal{R}_A^\ast (\nu) = \G_r^\ast(\nu_A).
\]
In particular, if $\nu$ is supported on $A$, then  $\nu_A = \mathcal{R}_A^\ast (\nu) = \nu$. This proves the following result, known as domination principle.
\begin{theorem}\label{theo:domination} Assume that $A$ is measurable with respect to the partition into $n$-cylinders for some $n \in \N$ and that $\nu$ is a finite Radon measure whose support is contained in $A$ such that $ \G_r^\ast(\nu)$ is well defined ($0< r \leq R$).
If $\mu$ is $1/r$-excessive and $\mu|_A \geq  \G_r^\ast(\nu)|_A$, then $\mu \geq  \G_r^\ast(\nu)$.
\end{theorem}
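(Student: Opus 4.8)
The plan is to deduce the statement from the identification $R_A(\mu)=\mathcal{F}_A^\ast(\mu)$ of Theorem~\ref{theo:reduced_measure} together with the balayée computation $\mathcal{F}_A^\ast(\G_r^\ast(\nu))=\G_r^\ast(\nu_A)$ carried out just above it, and from the trivial bound $R_A(\mu)\le\mu$.

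First I would record that $\G_r^\ast(\nu)$ is itself $1/r$-excessive: from $\G_r=\id+r\,\G_r\cL$ one obtains $\G_r^\ast=\id+r\,\cL^\ast\G_r^\ast$, so that $\cL^\ast(\G_r^\ast(\nu))=r^{-1}\bigl(\G_r^\ast(\nu)-\nu\bigr)\le r^{-1}\G_r^\ast(\nu)$ because $\nu\ge 0$; moreover $\G_r^\ast(\nu)$ is a $\sigma$-finite Radon measure since $\nu$ is finite and $\G_r^\ast(\nu)$ is well defined by hypothesis. Hence, as $A$ is measurable with respect to the partition into $n$-cylinders and $0<r\le R$, Theorem~\ref{theo:reduced_measure} applies to $\G_r^\ast(\nu)$ and gives $R_A(\G_r^\ast(\nu))=\mathcal{F}_A^\ast(\G_r^\ast(\nu))$. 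Combining this with the intertwining $\mathcal{F}_A^\ast\circ\G_r^\ast=\G_r^\ast\circ\mathcal{R}_A^\ast$ established above and with $\nu_A=\mathcal{R}_A^\ast(\nu)=\nu$, which holds because $\supp\nu\subset A$, I conclude $\mathcal{F}_A^\ast(\G_r^\ast(\nu))=\G_r^\ast(\nu)$.

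Next I would use that $\mathcal{F}_A^\ast$ is monotone and sees only the restriction to $A$: since each $\mathcal{F}_A(f)$ is supported on $A$ and $\mathcal{F}_A$ is a positive operator, a measure $m$ enters $\mathcal{F}_A^\ast(m)$ only through $m|_A$, so $\G_r^\ast(\nu)|_A\le\mu|_A$ forces $\mathcal{F}_A^\ast(\G_r^\ast(\nu))\le\mathcal{F}_A^\ast(\mu)$. Applying Theorem~\ref{theo:reduced_measure} once more to the $1/r$-excessive measure $\mu$ gives $\mathcal{F}_A^\ast(\mu)=R_A(\mu)$, and $R_A(\mu)\le\mu$ because $\mu$ itself belongs to the family of $1/r$-excessive measures with $\eta|_A\ge\mu|_A$ whose infimum defines $R_A(\mu)$. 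Chaining the inequalities,
\[ \G_r^\ast(\nu)=\mathcal{F}_A^\ast(\G_r^\ast(\nu))\le\mathcal{F}_A^\ast(\mu)=R_A(\mu)\le\mu, \]
which is the assertion.

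The argument is essentially bookkeeping built on Theorem~\ref{theo:reduced_measure}, and I do not expect a genuine obstacle. The only point demanding attention is keeping all the operator identities within the classes in which they were established (Lipschitz functions supported on $\Sigma\times K$ with $K\subset G$ finite, and $\sigma$-finite Radon measures); accordingly I would make sure, before invoking $\mathcal{F}_A^\ast$, $\mathcal{R}_A^\ast$ and Theorem~\ref{theo:reduced_measure}, that $\G_r^\ast(\nu)$ indeed lies in the admissible class of measures, which is guaranteed by $\nu$ being finite with support in $A$ and by the standing transience assumption.
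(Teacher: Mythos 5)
Your proposal is correct and follows essentially the same route as the paper: both rest on Theorem \ref{theo:reduced_measure} together with the balay\'ee identity $\mathcal{F}_A^\ast\circ\G_r^\ast=\G_r^\ast\circ\mathcal{R}_A^\ast$ and the observation that $\mathcal{R}_A^\ast(\nu)=\nu$ when $\supp\nu\subset A$. The only cosmetic difference is that the paper reads off $\mu\geq R_A(\G_r^\ast(\nu))=\G_r^\ast(\nu)$ directly from the infimum defining the reduced measure of $\G_r^\ast(\nu)$, whereas you push the hypothesis through the monotone, $A$-local operator $\mathcal{F}_A^\ast$ and use $\mathcal{F}_A^\ast(\mu)=R_A(\mu)\leq\mu$; both are valid bookkeeping around the same key facts.
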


\section*{Acknowledgments} The authors would like express their gratitude for support of the Post-Graduate program of the Universidade Federal da Bahia, where the first author obtained her PhD. Her thesis, defended in November 2019, comprises the main part of the results of this article and was supervised by the second author. In particular, the authors acknowledge support by  CAPES and CNPq: The first author was supported by CAPES during her PhD, and the second was partially supported  by CAPES (Programa PROEX da Pós-Graduação em Matemática do IM-UFRJ) e CNPq (PQ 312632/2018-5, Universal 426814/2016-9).


\end{document}